\tikzset{node distance=3cm, auto}
\newtheorem{theorem}{Theorem}[section]
\newtheorem{lemma}[theorem]{Lemma}
\newtheorem{proposition}[theorem]{Proposition}
\newtheorem{corollary}[theorem]{Corollary}
\theoremstyle{definition}
\newtheorem{definition}[theorem]{Definition}
\newtheorem{claim}[theorem]{Claim}
\newtheorem{example}[theorem]{Example}
\newtheorem{question}[theorem]{Question}
\newtheorem{remark}[theorem]{Remark}
\newcommand{\mA}{\mathbb A}
\newcommand{\mB}{\mathbb B}
\newcommand{\mC}{{\mathbb C}}
\newcommand{\mE}{{\mathbb E}}
\newcommand{\mF}{\mathbb F}
\newcommand{\mP}{\mathbb P}
\newcommand{\mR}{{\mathbb R}}
\newcommand{\mT}{\mathbb T}
\newcommand{\mV}{\mathbb V}
\newcommand{\mW}{\mathbb W}
\newcommand{\mX}{\mathbb X}
\newcommand{\mY}{\mathbb Y}
\newcommand{\mZ}{{\mathbb Z}}
\newcommand{\ho}{\hookrightarrow}
\newcommand{\Gg}{\gamma}
\newcommand{\GG}{\Gamma}
\newcommand{\bo}{\omega}
\newcommand{\bs}{\sigma}
\newcommand{\ep}{\epsilon}
\newcommand{\D}{\Delta}
\newcommand{\kk}{\kappa}
\newcommand{\mcA}{\mathcal A}
\newcommand{\mcB}{\mathcal B}
\newcommand{\mcC}{\mathcal C}
\newcommand{\mcE}{\mathcal E}
\newcommand{\mcF}{\mathcal F}
\newcommand{\mcG}{\mathcal G}
\newcommand{\mcI}{\mathcal I}
\newcommand{\mcL}{\mathcal L}
\newcommand{\mcP}{\mathcal P}
\newcommand{\mcU}{\mathcal U}
\newcommand{\ti}{\tilde}
\newcommand{\lr}{\leftrightarrow}
\newcommand{\emp}{\emptyset}
\newcommand{\bP}{\bar P}
\newcommand{\bd}{\bar d}
\newcommand{\Aut}{\operatorname{Aut}}
\newcommand{\Hom}{\operatorname{Hom}}
\newcommand{\af}{\operatorname{af}}
\newcommand{\sing}{\operatorname{sing}}
\newcommand{\codim}{\operatorname{codim}}
\begin{document}

\title{Properties of high rank subvarieties of affine spaces}

\begin{abstract} We use tools of additive combinatorics for the study of  subvarieties defined by {\it high rank} families of polynomials in  high dimensional $\mF _q$-vector spaces.
In the first, analytic part of the paper we prove  a number properties of high rank systems
of polynomials. In the second, we use these properties to 
deduce results  in  Algebraic Geometry , such as an effective Stillman conjecture over algebraically closed fields, an analogue of Nullstellensatz for varieties over finite fields, and a strengthening of a recent result of \cite{Jan}. 
We also show that for $k$-varieties $\mX \subset \mA ^n$ of high rank any weakly polynomial 
function on  a set $\mX (k)\subset k^n$ extends to a polynomial.

 \end{abstract}

\author{David Kazhdan}
\address{Einstein Institute of Mathematics,
Edmond J. Safra Campus, Givaat Ram 
The Hebrew University of Jerusalem,
Jerusalem, 91904, Israel}
\email{david.kazhdan@mail.huji.ac.il}

\author{Tamar Ziegler}
\address{Einstein Institute of Mathematics,
Edmond J. Safra Campus, Givaat Ram 
The Hebrew University of Jerusalem,
Jerusalem, 91904, Israel}
\email{tamarz@math.huji.ac.il}

\thanks{The second author is supported by ERC grant ErgComNum 682150. 
}

\maketitle

\section{ Introduction}

\bigskip

Let $k$ be a field. For an algebraic $k$-variety $\mX$ 
we write $X(k):=\mX (k)$. To simplify notations we often write $X$ instead of $X(k)$. In particular we write $V:=\mV (k)$ when $\mV$ is a vector space and write $k^N$ for $\mA ^N(k)$. 

For a $k$-vector space $\mV$ we denote by $\mcP_d(\mV)$ the algebraic variety of polynomials on $\mV$ of degree $\leq d$ and  by $\mcP_d(V) $ the set of polynomials functions $P:V\to k$ of degree $\leq d$.  We always assume that   $d<|k|$, so the restriction  map $\mcP_d(\mV)(k)\to \mcP_d(V)$ is a bijection. For a family $\bar P=\{ P_i\}$ of polynomials on $\mV$
we denote by $\mX _{\bar P}\subset \mV$ the subscheme  defined by the ideal generated by $\{ P_i\} $ and by  $X_ {\bar P} $ the set $ \mX _P (k)\subset V$. We will not distinguish between the set of affine $k$-subspaces of $\mV$ and the set of affine  subspaces of $V$ since for an affine $k$-subspace $\mW \subset \mV$, the map $\mW \to \mW (k)$ is a bijection.

In the introduction we consider only the case of hypersurfaces $\mX \subset \mV$ and provide an informal outline of main results. Precise definitions appear in the next section.

\begin{definition}Let $P$ be a polynomial of degree $d$ on a  $k$-vector space $V$.
\begin{enumerate} 
\item  We denote by $\ti P :V^d\to k$ the multilinear symmetric form associated with $P$ defined by 
$\ti P(h_1, \ldots, h_d) : =  \Delta_{h_1} \ldots  \Delta_{h_d} P: V^d \to k$, where $\Delta_hP(x) = P(x+h)-P(x)$.
\item  The {\em rank} $r(P)$ is  the minimal number $r$ such that $P$ can be written in the form $P=\sum _{i=1}^rQ_iR_i$, where $Q_i,R_i$ are polynomials on $V$ of degrees $<d$. 
\item  We define the  {\em non-classical rank (nc-rank)} $r_{nc}(P)$ to be the rank of $\ti P$.
\item A polynomial $P$ is {\it $m$-universal} if for any polynomial $Q\in \mcP_d(k^m)$ of degree $d$ there exists an affine
 map $\phi :k^m\to V$ such that $Q=P\circ \phi$.
\item We denote by $\mX _P\subset \mV$ the hypersurface defined by the equation $P(v)=0$ and by $\mX _P^{\sing}$ the singular locus of $\mX _P$.
\item $s(P):= \codim _{\mX _P}( \mX _P^{\sing}) $.
\end{enumerate}
\end{definition}

\begin{remark}\label{pd} 
\begin{enumerate}
\item If char$(k) >d$ then $r(P) \le r_{nc}(P)$.
\item 
In low characteristic it can happen that $P$ is of high rank and $\ti P$ is of low rank, for example in characteristic $2$ the polynomial $P(x) = \sum_{1 <i<j<k <l\le n} x_ix_jx_kx_l$ is of rank $\sim n$, but of nc-rank $3$, see \cite{gt1}, \cite{lms}, \cite{tz-1}.
\end{enumerate}
\end{remark}

\begin{remark}\label{uniform-remark} 
 Let $\mcF _Q$ be the set of all affine maps $\phi :k^m\to V$ such that 
$Q=P\circ \phi$. We later show that in the case when $r_{nc}(P) \gg1$, the size 
of the set  $\mcF _Q$ is essentially independent of the choice of  
$Q\in \mcP_d(k^m)$.  
\end{remark}

We now turn to the applications.
\subsubsection{Sections of high rank varieties}

\begin{theorem} [Acc]\label{Acc} There exists $C=C(d)$ 
with the following property. For any field $k$ which is  either algebraically closed or finite field, any $k$-vector space $V$ and any polynomial $P\in \mcP_d(V)$ of nc-rank $>Cm^C$ the following hold: 
\begin{enumerate}
\item Any polynomial $Q\in \mcP_d(k^m)$ is $m$-universal.
\item Let $\text{Aff} _m(\mV) $ be the variety of affine maps $\phi :\mA ^m\to \mV$ and $\ti \kappa _P :  \text{Aff} _m(\mV) \to \mcP_d(\mA ^m) $ be the algebraic map defined by $\ti \kk _{P} (\phi):= P \circ \phi$. Then 
all fibers of  $\ti \kappa _P$ are algebraic varieties of the same dimension.
\item The map $\ti \kappa _P :  \text{Aff} _m(\mV) \to \mcP_d(\mA ^m) $ is flat.
\end{enumerate}
\end{theorem}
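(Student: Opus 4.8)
The plan is to reduce all three assertions to the case $k=\mF_q$ together with one exponential-sum estimate, and then to extract (2) from the Lang--Weil bounds, (3) from (2) by miracle flatness, and the algebraically closed case from the finite one by a spreading-out argument. So suppose first $k=\mF_q$, put $n=\dim V$ and $D=\dim\mcP_d(\mA^m)=\binom{m+d}{d}$, and identify $\mcP_d(\mA^m)(k)$ with $k^D$ through the coefficients, which is legitimate since $d<|k|$. I would prove that, for a suitable $C=C(d)$ and every $Q\in\mcP_d(\mA^m)$,
\[
\#\{\phi\in\text{Aff}_m(\mV)(k):P\circ\phi=Q\}=q^{(m+1)n-D}\bigl(1+o(1)\bigr),
\]
uniformly in $Q$ (and $(m+1)n\ge D$ is automatic from $r_{nc}(P)>Cm^C$ once $C$ is large, since the nc-rank of a $d$-linear form on $k^n$ is at most $n^{\lfloor d/2\rfloor}$). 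Expanding the indicator of $P\circ\phi=Q$ through a nontrivial additive character $\psi$ of $k$,
\[
\#\{\phi:P\circ\phi=Q\}=\frac{1}{q^{D}}\sum_{\xi\in k^{D}}\psi(-\langle\xi,Q\rangle)\sum_{\phi}\psi(\langle\xi,P\circ\phi\rangle),
\]
where $\langle\cdot,\cdot\rangle$ is the standard pairing on $k^D$; the term $\xi=0$ contributes the main term $q^{(m+1)n-D}>0$, so everything comes down to showing that for each $\xi\neq0$ one has $\bigl|\sum_{\phi}\psi(\langle\xi,P\circ\phi\rangle)\bigr|\le q^{(m+1)n-3D}$.

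For fixed $\xi\neq0$ the function $\phi=(v_0,v_1,\dots,v_m)\mapsto\langle\xi,P\circ\phi\rangle$ is a polynomial of degree $\le d$ on $V^{m+1}$, and the crucial point, which I expect to be the \emph{main obstacle}, is that it has nc-rank $\gg r_{nc}(P)/m^{O_d(1)}$. The reason is hereditary: $\ti P$, and more generally the iterated difference forms $\Delta_{h_1}\cdots\Delta_{h_j}P$, can be recovered from $\langle\xi,P\circ\phi\rangle$ by specializing some of the $v_i$ and taking finite differences in the others, operations that change the rank only by a factor depending on $d$, so a low-rank representation of $\langle\xi,P\circ\phi\rangle$ would force a low-rank representation of $P$. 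Feeding this into the rank-versus-bias estimate for bounded-degree polynomials over $\mF_q$ yields $\bigl|\sum_\phi\psi(\langle\xi,P\circ\phi\rangle)\bigr|\le q^{(m+1)n}q^{-c(d)r_{nc}(P)/m^{O_d(1)}}$, which is below $q^{(m+1)n-3D}$ as soon as $r_{nc}(P)>Cm^C$ with $C=C(d)$ large enough; summing over the $q^D-1$ nonzero $\xi$ completes the count. In particular the fibre is nonempty, which is (1) for $k=\mF_q$, and the uniformity is exactly the statement anticipated in Remark \ref{uniform-remark}.

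Applying the same count over every finite extension $\mF_{q^{N}}$ and invoking Lang--Weil, each fibre $\ti\kappa_P^{-1}(Q)$ over $\mF_q$ is an algebraic variety of dimension exactly $(m+1)n-D$ (with a single component of top dimension, since the leading coefficient of the count is $1$), which is (2). Since $\text{Aff}_m(\mV)\cong\mA^{(m+1)n}$ is Cohen--Macaulay and $\mcP_d(\mA^m)\cong\mA^{D}$ is regular, the miracle-flatness criterion --- a morphism from a Cohen--Macaulay scheme to a regular scheme is flat precisely when all its fibres have dimension $\dim(\text{source})-\dim(\text{target})$ --- promotes (2) to (3), over $\mF_q$ and, once (2) is known there, over any base field. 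Finally, for a general algebraically closed $k$ one spreads out: pick a finitely generated $\mZ$-subalgebra $A$ with fraction field a field of definition of $(V,P,Q)$ and corresponding models $V_A,P_A,Q_A$ over $A$. The condition $r_{nc}(\,\cdot\,)>Cm^C$ is constructible on $\operatorname{Spec}(A)$ and holds at the generic point, hence on a dense open set, hence at a dense set of closed points, whose residue fields are finite; over those points the finite-field case gives (1)--(3), and since nonemptiness of a fibre, its dimension, equidimensionality of the fibres, and flatness of $\ti\kappa_P$ are all constructible conditions on the base, they propagate to the generic point and then, by flat base change, to $\bar k$. This yields (1)--(3) for every algebraically closed field and finishes the proof.
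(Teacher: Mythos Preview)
Your proposal is correct and follows essentially the same strategy as the paper: establish the uniform fibre-count over $\mF_q$ by expanding through characters and invoking the bias--rank inequality (the rank-heredity step you flag as the main obstacle is exactly the content of the paper's Claims~3.3--3.5), then read off fibre dimensions via Lang--Weil and flatness via miracle flatness (Matsumura, Theorem~23.1). The only point of divergence is the transfer to algebraically closed fields: the paper argues via completeness of $\mathrm{ACF}_p$ and \L o\'s's theorem on ultraproducts, whereas you use the equivalent algebro-geometric spreading-out over a finitely generated $\mZ$-algebra together with constructibility of the rank condition and of the conclusions---two standard dialects for the same passage.
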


\begin{remark} 
\begin{enumerate}
\item  It is easy to see that $s(P)\leq 2r(P)$.
\item In the case when $k$ is an algebraically closed field of characteristic $0$ it is shown in \cite{S} that 
  $r(P)\leq d!s(d)$.
\item We can  show  the existence of $\ep =\ep (d) >0$ such that $r_{nc}(P)\leq \ep s^\ep(P)$ for polynomials of degree $d$.
\end{enumerate}
\end{remark}

\begin{corollary}There exist $C=C(d)$
such that  for any field $k$ which is  either algebraically closed or finite, a $k$-vector space $V$, any polynomial $P\in \mcP_d(V)$ such that $s(\tilde P)\geq Cm^C$ is $m$-universal.
\end{corollary}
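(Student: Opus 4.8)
The plan is to obtain the Corollary as essentially a one-line consequence of Theorem~\ref{Acc}(1): a lower bound on $s(\tilde P)$ is, up to polynomial loss, a lower bound on the non-classical rank of $P$, and Theorem~\ref{Acc} already handles the latter. So the only real task is to prove the comparison $s(\tilde P)\le 2\,r_{nc}(P)$ and then to match constants.

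For the comparison, I would first note that $\tilde P$ is itself a polynomial of degree $d$ on the vector space $V^{d}$ (here the standing hypothesis $d<|k|$ is used, so that $\tilde P$ is well defined and, for $\deg P=d$, nonzero). Applying the elementary inequality ``$s\le 2r$'' to $\tilde P$ --- if $\tilde P=\sum_i Q_iR_i$ with $\deg Q_i,\deg R_i<d$, then the common zero locus of the $Q_i,R_i$ has codimension $\le 2r(\tilde P)$ and is contained in $\mX_{\tilde P}^{\sing}$ --- gives $s(\tilde P)\le 2\,r(\tilde P)$. Finally $r(\tilde P)\le r_{nc}(P)$ (in fact the two coincide, $\tilde P$ being multilinear): $r_{nc}(P)$ is by definition the rank of the $d$-linear form $\tilde P$, and any presentation realising it exhibits $\tilde P$ as a sum of $r_{nc}(P)$ products of polynomials on $V^{d}$ of degree $<d$. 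Hence \[ s(\tilde P)\le 2\,r_{nc}(P). \]

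Now let $C_{0}=C_{0}(d)$ be the constant furnished by Theorem~\ref{Acc} and set $C:=2C_{0}+2$. If $P\in\mcP_{d}(V)$ satisfies $s(\tilde P)\ge Cm^{C}$, then $r_{nc}(P)\ge\tfrac12 Cm^{C}=(C_{0}+1)\,m^{2C_{0}+2}>C_{0}m^{C_{0}}$ for every $m\ge 1$. Since $k$ is algebraically closed or finite, Theorem~\ref{Acc}(1) applies to $P$ and shows that every $Q\in\mcP_{d}(k^{m})$ equals $P\circ\phi$ for some affine map $\phi:k^{m}\to V$; that is, $P$ is $m$-universal, which is the assertion.

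I do not anticipate a genuine obstacle here: the entire mathematical content is the inequality $s(\tilde P)\le 2\,r_{nc}(P)$, whose two ingredients --- the easy half ``$s\le 2r$'' of the rank/singularity dictionary and the identification of the rank of the multilinear form $\tilde P$ with its rank as a polynomial on $V^{d}$ --- are both soft, and everything else is the constant bookkeeping above. (Should the intended statement instead read $s(P)\ge Cm^{C}$, the same argument applies, replacing the comparison $s(\tilde P)\le 2r_{nc}(P)$ by a polynomial lower bound for $r_{nc}(P)$ in terms of $s(P)$, at the cost of a larger $C=C(d)$.)
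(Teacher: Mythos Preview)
Your proposal is correct and matches the paper's intended argument. The paper gives no explicit proof of the Corollary; it is placed immediately after the Remark recording $s(P)\le 2r(P)$ and is meant to follow at once by applying that inequality to $\tilde P$ (so that $s(\tilde P)\le 2r(\tilde P)=2r_{nc}(P)$) and then invoking Theorem~\ref{Acc}(1), exactly as you do.
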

\begin{question} Does there exist $\delta =\delta (d)$ such that $r(P)\leq \delta s(P)^\delta $?
\end{question}

\begin{remark}The analogous result holds for a system $\bar P$ of polynomials.
\end{remark}

\subsubsection{A strengthening of the main Theorem from \cite{Jan}. } In \cite{Jan} authors show that any non-trivial Zariski-closed condition on tensors that is functorial in the underlying vector space implies bounded rank. We show that the condition of being  Zariski-closed can be omitted.

\begin{theorem}\label{Jan} Let $k$ be an algebraically closed field, $\mcC$ the category of finite-dimensional affine $k$-vector spaces with morphisms being affine maps, let   $\mcF _d$ be the contravariant endofunctor  on $\mcC$ given by  
$$\mcF _d(V)=\{\text{Polynomials on $V$ of degree $\leq d$} \},$$ 
and let  $\mcG \subset \mcF$ be a proper subfunctor. Then there exists $r$ such that $r_{nc}(P)\leq r$ for any finite-dimensional $k$-vector space $V$ and $P\in \mcG (V)$.
\end{theorem}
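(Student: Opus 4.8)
The plan is to deduce this purely formally from Theorem~\ref{Acc}, which carries all the analytic weight; what remains is a short contrapositive argument together with two bookkeeping reductions. Since $\mcG$ is a \emph{proper} subfunctor, fix a finite-dimensional affine space $W_0$ and a polynomial $Q_0\in\mcF_d(W_0)\setminus\mcG(W_0)$. First I would arrange that $\deg Q_0=d$: if $\deg Q_0<d$, replace $W_0$ by $W_0\oplus k$ and $Q_0$ by $Q_0+t^d$, where $t$ is a coordinate on the new summand. The affine embedding $\iota:W_0\ho W_0\oplus k$, $w\mapsto(w,0)$, satisfies $\mcF_d(\iota)(Q_0+t^d)=Q_0$, so if $Q_0+t^d$ belonged to $\mcG(W_0\oplus k)$ then the subfunctor property would force $Q_0\in\mcG(W_0)$, a contradiction; thus $Q_0+t^d$ is again a witness, now of degree $d$. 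Set $m:=\dim W_0$ and fix an affine isomorphism $W_0\cong k^m$, under which $\mcF_d(W_0)$ is identified with $\mcP_d(k^m)$, the subset $\mcG(W_0)$ with a subset $\mcG(k^m)\subset\mcP_d(k^m)$ (using that $\mcG$ is a subfunctor and that the isomorphism and its inverse are both morphisms of $\mcC$), and $Q_0$ with a degree-$d$ polynomial on $k^m$.

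Next let $C=C(d)$ be the constant of Theorem~\ref{Acc} and put $r:=\lceil Cm^C\rceil$; I claim this $r$ has the required property. Suppose not: then there exist a finite-dimensional $k$-vector space $V$ and a polynomial $P\in\mcG(V)$ with $r_{nc}(P)>r\ge Cm^C$. By Theorem~\ref{Acc}(1), $P$ is $m$-universal, i.e.\ every degree-$d$ polynomial on $k^m$ has the form $P\circ\phi$ for some affine map $\phi:k^m\to V$; apply this to $Q_0$ to obtain $\phi$ with $Q_0=P\circ\phi=\mcF_d(\phi)(P)$. But $\phi$ is a morphism of $\mcC$ and $\mcG$ is a subfunctor of $\mcF_d$, so $\mcF_d(\phi)$ maps $\mcG(V)$ into $\mcG(k^m)$; since $P\in\mcG(V)$ this gives $Q_0=\mcF_d(\phi)(P)\in\mcG(k^m)=\mcG(W_0)$, contradicting the choice of $Q_0$. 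Hence $r_{nc}(P)\le r$ for every finite-dimensional $V$ and every $P\in\mcG(V)$.

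I do not expect a genuine obstacle here: the entire content — in particular the removal of the Zariski-closedness hypothesis present in \cite{Jan} — is already packaged in the unconditional universality statement of Theorem~\ref{Acc}(1), which holds for an arbitrary polynomial of sufficiently high nc-rank rather than merely in a limiting/Zariski-closure sense. The only points requiring care are the two reductions above (bringing the witness $Q_0$ to exact degree $d$, and transporting it onto $k^m$ by a choice of coordinates), and the observation that, $k$ being algebraically closed and hence infinite, the standing convention $d<|k|$ is automatic, so the identifications $\mcP_d(\mV)(k)=\mcP_d(V)$ used implicitly throughout are legitimate and ``polynomial of degree $\le d$'' unambiguously means an element of $\mcF_d(V)$.
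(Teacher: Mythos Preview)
Your argument is correct and is essentially the paper's own proof: both proceed by contrapositive, invoking the $m$-universality of a high-nc-rank $P\in\mcG(V)$ to pull back a witness $Q_0\notin\mcG(W_0)$ and derive a contradiction from the subfunctor property. The only cosmetic difference is that the paper cites Corollary~\ref{kappa} (surjectivity of $\tilde\kappa_P$ onto all of $\mcP_d(\mA^m)$), so it does not need your reduction to $\deg Q_0=d$; you cite Theorem~\ref{Acc}(1), whose notion of $m$-universality is phrased for degree-$d$ targets, and you correctly patch this with the $Q_0\mapsto Q_0+t^d$ trick.
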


\subsubsection{Extending weakly polynomial functions}

Let $k$ be a field, $V$ a $k$-vector space and $X$ a  subset of $V $. 
A function  $f:X\to k$ is {\em weakly polynomial} of degree $\leq a$, if the  restriction of $f$ on any  affine subspace $L$ of $V$ contained in $X$ is a  polynomial of degree $\leq a$.  
\begin{theorem}\label{ext}
Let $k$ be a field which either an algebraically closed field, or a finite field such that $|k|>ad$ and let $X\subset V$ be a hypersurface defined by a polynomial of degree $d$ of sufficiently high nc-rank. Then 
any $k$-valued weakly polynomial function of degree $\leq a$  on $X$ is a restriction of a polynomial  $F$ on $V$ of degree $\leq a$. 
\end{theorem}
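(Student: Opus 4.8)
The plan is to induct on the degree $a$ of the weakly polynomial function $f$, the base case $a=0$ being trivial since a locally constant function on the (high-rank, hence irreducible) hypersurface $X$ is constant. For the inductive step, suppose the statement is known for degree $\le a-1$. Given $f : X \to k$ weakly polynomial of degree $\le a$, the strategy is: first produce a candidate polynomial $F$ on $V$ of degree $\le a$ whose restriction to $X$ agrees with $f$ on "enough" affine subspaces, then upgrade this to genuine equality $F|_X = f$ by an argument that compares $F$ and $f$ along a rich family of subspaces through each point of $X$. The key input is Theorem~\ref{Acc}: since $\mathrm{rk}_{nc}(P)$ is large, every point of $X$ lies on many $m$-dimensional affine subspaces contained in $X$ (indeed $P$ is $m$-universal for $m$ as large as we like relative to the rank, in particular $P$ vanishes identically on a positive-dimensional family of affine subspaces, and through each fixed point $x \in X$ there is still a large-dimensional variety of such subspaces — one obtains these by looking at affine maps $\phi$ with $\phi(0)=x$ and $P\circ\phi \equiv 0$, whose space is controlled by the flatness/equidimensionality statement).

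The main construction: fix a generic affine subspace $L_0 \subset X$ of dimension $m$ (large), where $f|_{L_0}$ is a polynomial of degree $\le a$; extend this polynomial arbitrarily to a polynomial $F$ on $V$ of degree $\le a$. I then want to show $F|_X = f$. Consider the difference $g := f - F|_X$, a weakly polynomial function of degree $\le a$ on $X$ vanishing on $L_0$. The goal becomes: a weakly polynomial function of degree $\le a$ on $X$ vanishing on one generic $m$-plane in $X$ vanishes identically. To see this, take any point $x \in X$; I connect $x$ to $L_0$ through a chain of affine subspaces lying in $X$. Concretely, using $m$-universality one finds an affine subspace $M \subset X$ of dimension (say) $2m$ containing $x$ and meeting $L_0$ in a subspace of dimension $\ge 1$ (a dimension count: the family of such $M$ through $x$ is large, the condition of meeting $L_0$ nontrivially is a bounded-codimension condition, and high rank guarantees the intersection is nonempty). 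On $M$, $g$ restricts to a polynomial of degree $\le a$ that vanishes on the positive-dimensional set $M \cap L_0$; but that alone is not enough to conclude $g|_M=0$.

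So the actual mechanism must be more robust — this is the step I expect to be the main obstacle. The right approach, I believe, is: show that the restriction of $g$ to $X$ is determined by its restrictions to affine subspaces of a fixed large dimension $m$ via a "Leibniz-type" or interpolation identity, and then run a descent on degree. Precisely: for fixed $x\in X$ and a generic line direction $h$ with the line $x + kh \subset X$ (such $h$ form a high-rank — hence large — subvariety of $V$ by $m$-universality applied with $m=1$, or rather by the fact that $\mX_P$ contains many lines through $x$), the function $\Delta_h g$ is weakly polynomial of degree $\le a-1$ on a suitable high-rank subvariety; one must check that the relevant "derived" variety is again of high nc-rank (this uses a rank bound for $P$ restricted to / pulled back along incidence varieties, the analytic half of the paper). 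By induction $\Delta_h g$ extends to a polynomial of degree $\le a-1$ on $V$; since this holds for a Zariski-dense set of directions $h$ and these derivatives are compatible, one reconstructs that $g$ itself agrees on $X$ with a polynomial of degree $\le a$, and then the normalization on $L_0$ forces that polynomial to be $0$. The crux — and the place where the high-rank hypothesis is used most essentially, via Theorem~\ref{Acc} and the analytic estimates — is precisely the claim that the incidence varieties (pairs $(x,h)$ with $x, x+h \in X$, or families of $m$-planes in $X$) are themselves cut out by high-rank systems, so that the inductive hypothesis applies to them; establishing that closure property for the class of high-rank varieties is the technical heart of the argument.
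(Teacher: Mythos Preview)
Your approach is genuinely different from the paper's, and as written it has real gaps at exactly the point you yourself flag as ``the crux.''

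The paper does \emph{not} attempt to extend from a single generic subspace $L_0$ and then propagate. Instead it proceeds in two independent steps. First (Theorem~\ref{const}), it exhibits an \emph{explicit} family of degree-$d$ hypersurfaces $\mX_n=\{P_n=0\}$ with $P_n(w_1,\dots,w_n)=\sum_i\prod_j w_i^j$, proves $r_{nc}(P_n)\to\infty$, and verifies $\star_a$ for $\mX_n$ by hand, exploiting the large symmetry group $(S_d)^n$ together with a torus action and a character decomposition; the non-uniqueness of extensions is tamed eigenspace by eigenspace. Second (Proposition~\ref{p-ext} and Corollary~\ref{extension1}), it proves an extension-from-a-hyperplane lemma: if $f$ is weakly polynomial of degree $\le a$ on $X$ and $f|_{X\cap W}$ extends to a degree-$\le a$ polynomial on $W$, then $f$ extends on $V$. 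This step uses the testing result Proposition~\ref{testing-lines} together with Theorem~\ref{B} (almost every $m$-plane in a hyperplane slice $X_b$ extends to an $(m{+}1)$-plane in $X$ not contained in $W$), and an induction on $a$ showing that $f|_{X_b}$ drops in weak degree when $f$ vanishes on several parallel hyperplane slices. Finally, universality (Theorem~\ref{Acc}) realizes some $\mX_n$ as an affine section of the given high-rank $X$, and the extension lemma carries $\star_a$ from $\mX_n$ up to $X$.

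Your $\Delta_h$-descent has two concrete problems. First, for $\Delta_h g$ to be weakly polynomial of degree $\le a-1$ on a variety $Y$, you need that for every affine line $L\subset Y$ the affine span $\langle L,h\rangle$ lies in $X$; merely having $L\subset X$ and $L+h\subset X$ only gives $\deg(\Delta_h g|_L)\le a$, not $\le a-1$. There is no natural high-rank $Y$ with this stronger property for a \emph{fixed} $h$. Second, and more seriously, even if each $\Delta_h g$ extended to some polynomial $G_h$ of degree $\le a-1$, these extensions are non-unique when $a>d$ (they may differ by anything in $P\cdot\mcP_{a-1-d}(V)$), and you give no mechanism to choose them compatibly so as to integrate back to a single degree-$\le a$ polynomial agreeing with $g$ on $X$. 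This non-uniqueness is precisely what the Remark following Theorem~\ref{ext} names as the main difficulty, and it is what the paper's explicit-model-with-symmetry argument is designed to circumvent rather than confront directly.
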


\begin{remark}The main difficulty in a proof of Theorem \ref{ext} is the non-uniqueness of $F$ in the case when $a>d$.
\end{remark}

\subsubsection{Nullstellensatz over $\mF _q$.}
We prove the following variant of  Nullstellensatz for polynomials over $\mF_q$.  
\begin{theorem}\label{N}
There exists  $r(d)$ such that for any finite field $k=\mF _q,$  a $k$-vector space $V$ and a  
$k$-polynomial $P$ of degree $d$ and  nc-rank larger than $r(d)$ the following holds:
Any   polynomial $R$ of degree $<q/d$ vanishing at all points $x\in X_P$ 
is divisible by $P$.
\end{theorem}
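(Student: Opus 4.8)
I would deduce Theorem~\ref{N} from the universality statement in Theorem~\ref{Acc}, using the following strategy: reduce to a statement about a single polynomial $R$ vanishing on $X_P$, restrict to generic affine subspaces inside $X_P$, apply the classical Chevalley--Warning / Ax--Katz style argument on those subspaces, and then patch the local divisibility data together. Concretely, let $P$ have degree $d$ and nc-rank above the threshold $r(d)$ to be specified, and suppose $R$ has degree $e < q/d$ and vanishes at every point of $X_P = \mX_P(k)$.

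First I would record the algebraic reformulation: $R$ is divisible by $P$ in $k[V]$ if and only if $R$ lies in the ideal $(P)$, and since $P$ is irreducible for high-rank $P$ (a fact which follows from high nc-rank, as a reducible polynomial $P = P_1P_2$ has nc-rank bounded in terms of $\deg P_i$), it suffices to show $R$ vanishes on the scheme $\mX_P$ to high enough order, or more simply that $P \mid R$ after checking the vanishing of $R$ on $X_P$ forces a factorization. The key point is: for $|k| = q > d \cdot \deg R$ one can test membership in $(P)$ by restriction to affine lines, but lines need not lie in $X_P$, so instead I would use affine subspaces of bounded dimension $m$ that \emph{do} lie in $X_P$ — these exist in abundance precisely because $P$ is $m$-universal by Theorem~\ref{Acc}(1), once $r_{nc}(P) > C m^C$.

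The core argument: fix $m$ large enough (depending only on $d$ and on the degree bound $e < q/d$, but one must be careful — I would instead fix $m = m(d)$ and carry the degree as a parameter, using $|k| > ad$-type hypotheses as in Theorem~\ref{ext}). Consider the incidence variety of affine $m$-planes $L \subset \mX_P$; by universality and part (2) of Theorem~\ref{Acc} this family is large and its fibers are equidimensional, so the union of such $L$ covers $X_P$ and moreover any point of $X_P$ lies on many of them. On each such $L$, the polynomial $R|_L$ is a polynomial of degree $\le e < q/d \le q$ on $L \cong k^m$ vanishing at all $k$-points of $L$; since $\deg(R|_L) < q$ this forces $R|_L \equiv 0$ as a polynomial. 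Now run the standard descent: $R$ vanishes on every $m$-plane in $X_P$, hence (choosing $L$'s through a generic point and varying) all partial derivatives of $R$ in the "tangent directions to $X_P$" vanish along $X_P$; combined with the Euler-type relation and the high-rank hypothesis on $P$ (which guarantees $\mX_P$ is smooth in high codimension, $s(P) \gg 0$), one concludes that $R$ is constant along $\mX_P$ in every direction transverse to $\nabla P$, which yields $R \in (P)$. The passage from "$R|_L = 0$ for all $L \subset X_P$" to "$P \mid R$" is the place I would invoke the weakly-polynomial extension circle of ideas (Theorem~\ref{ext}): the function $R/P$, a priori only defined off $\mX_P$, extends across $\mX_P$ because $R$ restricted to $X_P$ is the zero weakly polynomial function, and the nc-rank hypothesis lets us represent $R = PF + R'$ with $\deg R' < \deg R$ and $R'$ again vanishing on $X_P$, so an induction on $\deg R$ closes the argument.

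\textbf{Main obstacle.} The hard part will be the quantitative bookkeeping between the three parameters: the nc-rank threshold $r(d)$, the dimension $m$ of the test planes (which controls how strong a universality statement we need), and the degree bound $q/d$ on $R$. Theorem~\ref{Acc} requires $r_{nc}(P) > Cm^C$, so $m$ must be bounded in terms of $d$ alone for $r(d)$ to depend only on $d$ as the statement demands; but a single bounded-dimensional plane only detects vanishing of $R$ of degree $< q$, not the finer divisibility, so the induction on $\deg R$ must be organized so that the rank threshold does not grow at each step — this requires the "robust" form of universality and the equidimensionality of fibers (parts (2)--(3) of Theorem~\ref{Acc}), plus the remark that the number of $\phi$ with $P\circ\phi = Q$ is essentially independent of $Q$ (Remark~\ref{uniform-remark}), to ensure that the local vanishing data glues to a global factorization without loss. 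A secondary technical point is handling low characteristic, where rank and nc-rank diverge (Remark~\ref{pd}); but since the hypothesis and conclusion are both phrased via nc-rank, and irreducibility plus smooth-in-high-codimension both follow from high nc-rank uniformly, this should go through.
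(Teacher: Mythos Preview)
Your plan diverges completely from the paper's argument, and the step where you pass from ``$R$ vanishes on every affine $m$-plane $L\subset X_P$'' to ``$P\mid R$'' is a genuine gap. Observe that the conclusion $R|_L\equiv 0$ (as a polynomial on $L$) carries no information beyond the hypothesis: since $L\subset X_P$ and $R$ vanishes on $X_P(k)$, you already knew $R$ vanishes at every $k$-point of $L$; the degree bound $\deg R<q$ then forces $R|_L=0$ identically, but this is still just the statement that $R$ is zero on a subset of $X_P(k)$. So after Step~1 you are exactly where you started. Your proposed gluing via weakly polynomial extension does not apply: Theorem~\ref{ext} turns a weakly polynomial function on $X_P$ into a polynomial on $V$, but here the function in question is identically zero and already extends trivially. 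The induction $R=PF+R'$ with $\deg R'<\deg R$ and $R'|_{X_P}=0$ begs the question --- producing such an $F$ is precisely the divisibility you are trying to prove. None of universality, equidimensionality of fibers, or the tangent-direction argument you sketch explain how to manufacture $F$.

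The paper's proof (of the generalization Theorem~\ref{main-null}) avoids all of this with a two-line counting argument. High nc-rank makes $\mX_P$ irreducible, so if $R\notin(P)$ then the scheme $\mY$ cut out by $P$ and $R$ has dimension $n-2$; a Bezout-type ``rough bound'' (Lemma~\ref{rb}, proved by intersecting with generic combinations of the Frobenius hypersurfaces $x_j^q-x_j$) gives $|\mY(\mF_q)|\le ad\,q^{n-2}$. On the other hand, since $R$ vanishes on $X_P(\mF_q)$ we have $\mY(\mF_q)=\mX_P(\mF_q)$, and equidistribution (Proposition~\ref{size}) gives $|\mX_P(\mF_q)|>q^{n-1}(1-1/q)$. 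These two bounds are incompatible once $q>ad$, i.e.\ once $a<q/d$. This is where the degree hypothesis $\deg R<q/d$ actually enters --- as the threshold at which the Bezout upper bound falls below the equidistribution lower bound --- and it is this idea that is missing from your outline.
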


\begin{remark} 
\leavevmode
\begin{enumerate}
\item This result is a strengthening of the Proposition 9.2 from  \cite {gt1} in two ways:
   \begin{enumerate}
     \item We show the independence of $r(d)$ on the degree of $R$, and 
     \item The paper \cite{bl} shows the existence of polynomials $Q_i$ of bounded degrees such that $R(x)=\sum _{i=1}^cQ_i(x)P_i(x)$, for all 
           $x\in \mX (k)$, but does not show that
$R$ is contained in the ideal generated by $\{ P_i\}$.
    \end{enumerate} 
\item We outline ideas of  proofs of Theorems \ref{ext}, \ref{N} at the beginning of corresponding sections. 
\item
The quantitative bound on the rank in all proofs depend {\em only} on the bounds in Theorem \ref{bias-rank-1}. We conjectured 
that the bound on $r$ in Theorem \ref{bias-rank-1} depends polynomially on $s$. This conjecture was proved in \cite{Mi} (and  in a slightly weaker form in \cite{janzer}).
\end{enumerate}
\end{remark}

\section{The formulation of results.}
We start with a series of definitions.

\begin{definition}[Rank]\label{rank} 
Let $k$ be field and $V$ a $k$-vector space.
\begin{enumerate}
\item A tensor of degree $d$ on $V$ is a multilinear map $T:V^d\to k$.
 \item Let $P:V \to k$ be a polynomial of degree $d$. We define the {\em rank} $r(P)$ as the minimal 
number $r$ such that $P$ can be written as a sum $P=\sum _{j=1}^rQ_jR_j$ where $Q_j,R_j$ are 
polynomials of degrees $<\deg(P)$ defined over $k$ \footnote{This notion of rank is also known as {\em Schmidt-rank} in the analytic number theory literature or {\em strength} in the algebraic geometry literature.}. We define the rank of a degree $1$ polynomial to be infinite, and the rank of the zero polynomial to be $0$. 

 \item For a polynomial $P:V \to k$ of degree $d$ we define $\ti P(h_1, \ldots, h_d) : =  \Delta_{h_1} \ldots  \Delta_{h_d} P: V^d \to k$, where $\Delta_hP(x) = P(x+h)-P(x)$. 
 \item  We define the  {\em non-classical rank (nc-rank)} $r_{nc}(P)$ to be the rank of $r(\ti P)$.
 \item For a $d$ tensor $P : V^d \to k$ we define the {\em partition rank (p-rank)}  $pr(P)$ as the minimal 
number $r$ such that $P$ can be written as a sum $P(x_1, \ldots, x_d)=\sum _{ J_i \subset [1,d], i \in [r]} Q_j((x_l)_{l \in J_i})R_j((x_l)_{l \in J^c_i})$ where $Q_j,R_j$ are degree $<d$ tensors.

\item For any sequence $\bar d=(d_1,\dots ,d_c)$ we denote by $\mcP _{\bar d}(V)$ the space of families
 $\bar P=(P_i)_{ 1\leq i\leq c} $  of polynomials such that $\deg P_i \le d_i$
We define the {\em rank} $r(\bar P)$ as the minimal rank of the polynomials $\sum _{\bar a\in k^c \setminus \{0\}}a_iP_i$. 
We denote $d:= \max d_i$. Define $r_{nc}(\bar P)$, $pr(\bar P)$ similarly. 
 \item For a family $\bar P=(P_i)_{1\leq i\leq c}$ of polynomials on $\mV$ we define the subscheme $\mX _{\bar P}\subset \mV$ by the system of equations $\{v \in V: P_i(v)=0, 1\leq i\leq c\}$,  and define the  rank of   $\mX _{\bar P}$ to be the rank of $\bar P$.
\end{enumerate}
\end{definition}

\begin{remark} The rank $r(\bar P)$ depends only on  the linear span  $L(\bar P)$ of $(P_i)$.
\end{remark}

The following statement is immediate.

\begin{claim}\label{ord}Let $\bar P=( P_i)_{i=1}^c$ be 
 a family of polynomials with  $r(\bar P)\neq 0$. Then
\begin{enumerate} 
\item $\dim(L(\bar P))=c$.
\item There exists a basis $Q_i^j$ in $L(\bar P)$ such that 
\begin{enumerate} 
\item $\deg (Q^j_i)=j$.
\item For any $j$ the family $\bar Q^j=(Q_i^j)$ is of rank $\geq r(\bar P)$.
\end{enumerate}
\end{enumerate}
\end{claim}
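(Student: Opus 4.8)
The plan is to read both statements off the definition of $r(\bar P)$ as $\min_{\bar a\in k^c\setminus\{0\}}r(\sum_i a_iP_i)$, together with the degree filtration on the linear span $L(\bar P)$.

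For (1): one first observes that $r(Q)=0$ if and only if $Q=0$ — the zero polynomial has rank $0$ by convention, while any nonzero polynomial is, by definition, either of infinite rank (if $\deg Q\le 1$) or a nonempty sum of products of lower-degree polynomials, hence of rank $\geq 1$. So the hypothesis $r(\bar P)\neq 0$ forces $\sum_i a_iP_i\neq 0$ for every $\bar a\neq 0$; that is, the $P_i$ are linearly independent and $\dim L(\bar P)=c$.

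For (2): put $L:=L(\bar P)$, and for $0\leq j\leq d$ set $L^{\leq j}:=\{Q\in L:\deg Q\leq j\}$, with $L^{\leq -1}:=0$, so that $0=L^{\leq-1}\subseteq L^{\leq 0}\subseteq\cdots\subseteq L^{\leq d}=L$. Since $d<|k|$, every $Q\in L$ has a well-defined degree-$j$ homogeneous component $\pi_j(Q)$, and I claim $\pi_j$ descends to a linear \emph{injection} $L^{\leq j}/L^{\leq j-1}\hookrightarrow\{\text{homogeneous polynomials of degree }j\}$: its kernel is $\{Q\in L^{\leq j}:\deg Q\leq j-1\}$, which is exactly $L^{\leq j-1}$. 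Now choose, for each $j$, a complement $C_j$ of $L^{\leq j-1}$ in $L^{\leq j}$ and a basis $Q^j_1,\dots,Q^j_{c_j}$ of $C_j$ (with $c_j=\dim C_j$). Iterating $L^{\leq j}=L^{\leq j-1}\oplus C_j$ gives $L=\bigoplus_j C_j$, so the $Q^j_i$ form a basis of $L$ with $\sum_j c_j=c$; and since every nonzero element of $C_j$ lies outside $L^{\leq j-1}$, it has degree exactly $j$, so $\deg Q^j_i=j$, which is (a).

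For (b): fix $j$ and $\bar a=(a_i)_{i=1}^{c_j}\neq 0$, and set $Q:=\sum_i a_iQ^j_i\in C_j$. Then $Q\neq 0$ by linear independence of the $Q^j_i$, and, because $\pi_j$ is injective on $C_j$, the elements $\pi_j(Q^j_1),\dots,\pi_j(Q^j_{c_j})$ are linearly independent, whence $\pi_j(Q)=\sum_i a_i\pi_j(Q^j_i)\neq 0$; thus $\deg Q=j$ and $Q$ is a \emph{nonzero} element of $L=L(\bar P)$. Writing $Q=\sum_i b_iP_i$ with $\bar b\neq 0$ (forced by part (1)), the definition of $r(\bar P)$ yields $r(Q)\geq r(\bar P)$, so $\bar Q^j=(Q^j_i)_i$ has rank $\geq r(\bar P)$. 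The one step that deserves care is exactly this last one — ensuring that a nonzero combination of the chosen degree-$j$ basis vectors does not drop degree, so that it remains an element of $L(\bar P)$ to which the defining minimum for $r(\bar P)$ applies — and it is precisely the injectivity of $\pi_j$ on the associated graded that secures it. I anticipate no further obstacle.
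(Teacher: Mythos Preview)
Your proof is correct and follows the natural degree-filtration argument; the paper gives no proof at all, declaring the claim immediate. One small note: the step you flag as delicate in (b) --- that a nonzero combination of the $Q^j_i$ retains degree exactly $j$ --- is in fact unnecessary for (b), since any nonzero element of $L(\bar P)$ is already a nonzero linear combination of the $P_i$ (regardless of its degree) and hence has rank $\geq r(\bar P)$ by definition; the degree-preservation is needed only for (a).
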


\begin{remark} Since the subscheme $\mX _{\bar P} $ depends only on the space $L(\bar P)$,  we can (and will)  assume that all the families $\bar P$ we consider satisfy the conditions of Claim \ref{ord} on $\bar Q$.
We also  may  assume that $\bar P$ has no affine polynomials, since we can initially restrict to the locus of the affine polynomials. 
\end{remark}

\begin{lemma}\label{rank-p-rank}  Let $P$ be a $d$-tensor.  Then  $r(P)\le pr(P) \le 4^dr(P)$.
\end{lemma}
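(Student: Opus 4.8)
The plan is to prove the two inequalities separately. The lower bound $r(P) \le pr(P)$ is essentially by definition: if $P = \sum_{i=1}^r Q_i((x_l)_{l\in J_i}) R_i((x_l)_{l \in J_i^c})$ is a partition-rank decomposition, then each summand $Q_i R_i$, viewed as a polynomial on $V^d$ in the $d$ vector variables, is a product of two polynomials of positive degree (each $<d$ since the $J_i$ are proper nonempty subsets), hence has rank $\le 1$ as a polynomial; summing gives $r(P) \le r \le pr(P)$. I would spell out that "rank" here means the polynomial rank of $P$ regarded as a polynomial function on the vector space $V^d$, and that a product of two lower-degree polynomials contributes $1$ to that rank.

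For the upper bound $pr(P) \le 4^d r(P)$, I would argue that it suffices to treat the case $r(P) = 1$, i.e. $P = QR$ for polynomials $Q, R$ on $V^d$ with $1 \le \deg Q, \deg R$ and $\deg Q + \deg R \le d$ — wait, more carefully, $P$ is multilinear of degree $d$ in the $d$ slots, so $Q,R$ need not be multilinear. The key point is to show: if $T$ is a $d$-tensor and $T = QR$ as a product of two polynomials on $V^d$ of positive degrees, then $pr(T) \le 4^d$. Then by subadditivity of partition rank, $pr(P) \le 4^d r(P)$. So the whole content is this single claim about a $d$-tensor that factors as a product.

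To prove that claim, I would use multilinearity to extract the relevant multilinear piece. Write $V^d$ with coordinates $x_1, \dots, x_d$, and for each $i$ let $x_i = y_i + z_i$ formally; since $T$ is multilinear, $T$ is the multilinear-in-each-slot part. Expanding $Q(x_1,\dots,x_d)R(x_1,\dots,x_d)$ after the substitution $x_i = \sum_{\epsilon} x_i^{(\epsilon)}$ and extracting the fully multilinear component amounts to: for each $i \in [d]$, the variable $x_i$ appears either in the $Q$-factor or in the $R$-factor (after polarization / taking the multilinear part, cross terms where $x_i$ appears in both with total degree $1$ in that slot split as "$x_i$ to $Q$" or "$x_i$ to $R$"). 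This produces a decomposition of $T$ indexed by subsets $S \subseteq [d]$ (the slots assigned to $Q$): $T(x_1,\dots,x_d) = \sum_{S \subseteq [d]} Q_S((x_i)_{i \in S}) R_S((x_i)_{i \notin S})$, where $Q_S$ is the multilinear-in-$S$ part of $Q$ and $R_S$ the multilinear-in-$S^c$ part of $R$. Each term with $S$ a proper nonempty subset is a legitimate partition-rank-$1$ term; there are at most $2^d$ such $S$, so each contributes. The terms $S = \emptyset$ and $S = [d]$ are the problematic ones — but $S=[d]$ forces $R_S$ to be a constant and $Q_S$ to be the multilinear part of $Q$, which has degree $d$; since $\deg Q \le d$ and, in the rank-$1$ factorization of a genuine tensor we may assume $\deg Q, \deg R \ge 1$, actually $S=[d]$ would need $\deg R \ge $ ... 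I need $\deg Q < d$; this is where the hypothesis $r(P)=1$ meaning $P = QR$ with $\deg Q, \deg R < \deg P = d$ enters, killing the extreme terms $S = \emptyset, [d]$ entirely. So $T = \sum_{\emptyset \ne S \subsetneq [d]} Q_S R_S$, giving $pr(T) \le 2^d - 2 \le 4^d$. (The slack $4^d$ rather than $2^d$ presumably absorbs the need to further decompose the non-multilinear $Q_S, R_S$ into tensors; if $Q_S$ is only known to be a polynomial of degree $<d$ on $\bigoplus_{i\in S} V$ that is multilinear in each of the $|S|$ slots, it already \emph{is} such a tensor, so perhaps $2^d$ suffices and $4^d$ is a safe over-count — I would not worry about optimizing the constant.)

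The main obstacle I anticipate is bookkeeping in the "extract the multilinear part" step: carefully justifying, via iterated application of the difference operators $\Delta_{h_i}$ or via a polarization argument in each slot separately, that the multilinear component of a product $QR$ decomposes as the stated sum over $S$, with each $Q_S$ genuinely multilinear in its slots and of the correct (bounded) degree. Everything else — subadditivity of $pr$ under sums, the trivial direction, and the counting $2^d - 2 \le 4^d$ — is routine. I would present the slot-by-slot polarization as the technical heart and keep the constant deliberately lossy.
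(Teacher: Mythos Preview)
Your proposal is correct and follows essentially the same strategy as the paper: the inequality $r(P)\le pr(P)$ is immediate, and for $pr(P)\le 4^{d}\,r(P)$ one reduces by subadditivity to a single product $P=QR$ with $\deg Q,\deg R<d$ and then extracts the multilinear component of $QR$ as a sum $\sum_{\emptyset\neq S\subsetneq[d]} Q_{S}\,R_{S^{c}}$ of at most $2^{d}-2$ partition-rank-one terms, where $Q_{S}$ is the piece of $Q$ of multi-degree $\mathbf{1}_{S}$. The only difference is cosmetic: the paper realizes this projection via the character decomposition of $\mcP$ under the torus $(k^{\star})^{d}$ (whence its side hypothesis $|k|\ge d+2$), whereas your formal multi-degree extraction is the same operation phrased without that field-size restriction; your remark that $2^{d}-2$ already suffices is exactly what the paper's count gives as well, with $4^{d}$ a deliberate over-count.
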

\begin{proof}
 Let $P$ be a $d$-tensor. 
It is clear that $r(P)\le pr(P)$. We show that $pr(P) \le 4^dr(P)$.   Let $k$ be a field with  $|k|\geq d+2$ and write $G=(k^\star)^d$. For any 
$\bar j =\{ j_1,\dots ,j_d\}$, $j_d\leq d$ we denote by $\chi _{\bar j}:G\to k^\star$ the character $\bar t\to \prod _{i=1}^dt_i^{j_i}$ where $\bar t =(t_1,\dots,t_d)$.

Let $V_i$, $1\leq i\leq d$ be $k$-vector spaces and $\mcP$ the space of polynomials $P: V_1\times \ldots \times  V_d \to k$ of degree $\leq d$. Let $J$ be the 
space of maps $\bar j$ from $ [1,d]^d \to [1,d] $.
For any $\bar j\in J$ we denote by  $\mcP _{\bar j}$ the subspace of 
polynomials $P: V_1\times \ldots \times V_d\to k$ such that 
$$P(t_1v_1, \dots ,t_dv_d)= \prod _{i=1}^dt_i^{j_i} P(v_1, \dots ,v_d).$$ 
Since $|G|$ is prime to char$(k)$ and  $|k|\geq d+2$ we have a direct sum 
decomposition 
$ \mcP =\oplus _{\bar j\in J} \mcP _{\bar j} $. We denote by $p_{\bar j} : \mcP \to \mcP _{\bar j} $ the projection. 

We write $\bar 1=(1,\dots ,1)\in J$ denote by $p_d : \mcP \to \mcP _{\bar 1} $ the projection.  For any $a,b, a+b=d$ we denote by 
$J_{a,b}\subset J\times J$ the set of pairs $\bar j_a, \bar j_b$ such that 
$\text{supp} ( \bar j_a)\cap \text{supp} ( \bar j_b)=\emp$, $\deg ( \bar j_a)=a$, $\deg ( \bar j_b)=b$. 

For any polynomials $Q,R \in \mcP $ of degrees  $a,b, a+b=d$ we write 
$P(Q,R)=\sum _{ (j_1,j_2)\in J_{a,b}} p_{ j_1}(Q) p_{ j_2}(R)$. We have that  $p_{\bar 1}(QR)= P(Q,R) $, and therefore  $pr(P)\leq 4^dr(P)$.
\end{proof}

\begin{definition}\label{Af}
\leavevmode
\begin{enumerate}
\item For $m\geq 1$ and a $k$-vector space $V$, we denote by $\text{Aff} _m(\mV)$ the algebraic variety of affine maps $\phi :\mA ^m\to \mV$ and write $\text{Aff} _m(V) := \text{Aff} _m(\mV) (k)$.
 \item We define an algebraic morphism $\ti \kk _{\bar P}: \text{Aff} _m(\mV) \to \mcP_{\bar d}(\mA ^m)$ by $\ti \kk _{\bar P} (\phi):= \bar P \circ \phi$, and denote  by $\kk _{\bar P}$ the corresponding map  $\text{Aff} _m(V) \to \mcP_{\bar d}(k ^m) $.
\end{enumerate}
\end{definition}

\begin{definition}\label{un}
 A map $\kk :M\to N$ between finite sets is {\em $\ep$-uniform}, where $\ep >0$, if for all $n\in N$ we have
$$\left||N||\kk ^{-1}(n) |-|M| \right|\leq \ep |M|.$$
\end{definition}

\begin{remark} In this paper we say that a bound $r(\bar d,m,t)$ is {\it effective} if 
\begin{enumerate}
\item For  fixed  $d:=\max _id_i$, the bound 
is polynomial in $m, t, c$.  
\item The dependence on $d$ is doubly exponential.
\end{enumerate}
 The  effective lower bounds for  $r(\bar d,m,t)$ follow from the Conjecture \ref{conj-bias} proven  in \cite{Mi}.
\end{remark}

\begin{theorem}\label{A1} For any sequence $\bar d$ and any $m, t\geq 1$, there exists an effective bound $r(\bar d,m,t)$ such that 
 for any finite field $k=\mF _q$, a $k$-vector space $V$ and
 a family $\bar P\in \mcP_{\bar d}(V)$ of nc-rank $\geq r(\bar d,m,t)$, the map $\kk _P$ is $q^{-t}$ uniform.
\end{theorem}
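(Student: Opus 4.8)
The plan is to prove $q^{-t}$-uniformity of $\kk_{\bar P}$ by the standard Fourier-analytic / equidistribution argument, reducing the counting of fibers $|\kk_{\bar P}^{-1}(\bar Q)|$ to exponential sums controlled by the nc-rank. Fix $\bar Q = (Q_i) \in \mcP_{\bar d}(k^m)$. The fiber $\kk_{\bar P}^{-1}(\bar Q)$ consists of affine maps $\phi : k^m \to V$ with $P_i \circ \phi = Q_i$ for all $i$. Writing an affine map as $\phi(y) = v_0 + \sum_{j=1}^m y_j v_j$ with $(v_0, v_1, \dots, v_m) \in V^{m+1}$, and detecting the polynomial identities $P_i\circ\phi = Q_i$ coefficient-by-coefficient via additive characters on $k$, one gets
$$
|\kk_{\bar P}^{-1}(\bar Q)| = \frac{1}{|W|}\sum_{\xi} \sum_{(v_0,\dots,v_m)\in V^{m+1}} \psi\!\left(\sum_i \sum_{\un{a}} \xi_{i,\un{a}}\bigl(c_{\un a}(P_i\circ\phi) - c_{\un a}(Q_i)\bigr)\right),
$$
where $\psi$ is a nontrivial additive character of $k$, the $c_{\un a}$ are the coefficient functionals of a polynomial on $k^m$ of degree $\le d$, $\xi$ ranges over the (finite) dual $W$ of that coefficient space, and the inner $\xi$-indexed linear combination of the $c_{\un a}(P_i\circ\phi)$ is, for each fixed $\xi$, a polynomial in $(v_0,\dots,v_m)$ of degree $\le d$. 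The main term is the $\xi = 0$ contribution, which gives $|V|^{m+1}/|W| = |V|^{m+1}/|\mcP_{\bar d}(k^m)|$, i.e. exactly $|M|/|N|$ with $M = \text{Aff}_m(V)$, $N = \mcP_{\bar d}(k^m)$. So $q^{-t}$-uniformity will follow once I show that every $\xi \ne 0$ contributes at most $q^{-t}|V|^{m+1}$ (up to dividing by $|W|$, which is absorbed), i.e. the relevant character sum has bias $\le q^{-t}$.

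The key step is therefore: for $\xi \ne 0$, the polynomial $R_\xi(v_0,\dots,v_m) := \sum_{i,\un a}\xi_{i,\un a} c_{\un a}(P_i\circ\phi)$ in the $m+1$ vector-variables has high rank, hence small bias. Here I invoke Theorem \ref{bias-rank-1} (the bias--rank theorem, in its polynomial form proven in \cite{Mi}): a degree-$\le d$ polynomial on a vector space whose bias exceeds $q^{-t}$ has bounded rank, with the bound polynomial in $t$ (for fixed $d$). So it suffices to bound below the rank of $R_\xi$ in terms of $r_{nc}(\bar P)$. Now $R_\xi$ is built from the multilinearizations/derivatives of the $P_i$: writing out $c_{\un a}(P_i\circ\phi)$, each coefficient is, up to a nonzero scalar (using $d < |k|$), a value of $\ti P_i$ evaluated at a tuple of the $v_j$'s, possibly with repetitions, i.e. of the form $\ti P_i(v_{j_1},\dots,v_{j_d})$. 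A nonzero combination $\xi\ne 0$ picks out at least one coefficient $c_{\un a}$ with $\xi_{i,\un a}\ne 0$ for some $i$; restricting the ambient $V^{m+1}$ to the sub-affine-space where all but the relevant coordinates $v_j$ are frozen (to generic values), $R_\xi$ restricts to an expression dominated by $\ti P_i$ on a few variables, whose rank is bounded below by $r(\ti P_i)=r_{nc}(P_i)$ minus a bounded quantity. Combined with Claim \ref{ord} (we may assume the basis $Q^j_i$ is graded and each graded piece has rank $\ge r(\bar P)$, so that no cancellation between different degrees destroys the rank) and Lemma \ref{rank-p-rank} (to pass between rank and partition rank of the tensor $\ti P_i$ with only a factor $4^d$ loss), one gets $r(R_\xi) \ge c_d\, r_{nc}(\bar P) - C_d$ for constants depending only on $d$ (and mildly on $m$, $c$). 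Hence choosing $r(\bar d, m, t)$ large enough — polynomial in $t, m, c$ for fixed $d$, by the polynomial bias--rank bound — forces $\text{bias}(R_\xi) \le q^{-t}$ for all $\xi\ne 0$.

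Finally I assemble the pieces: the number of nonzero $\xi$ is $|W| - 1 < |\mcP_{\bar d}(k^m)|$, each contributes at most $q^{-t}|V|^{m+1}$ in absolute value to $|W|\cdot|\kk_{\bar P}^{-1}(\bar Q)|$, so
$$
\bigl|\,|N|\,|\kk_{\bar P}^{-1}(\bar Q)| - |M|\,\bigr| = \Bigl|\sum_{\xi\ne 0}\sum_{(v_0,\dots,v_m)}\psi(\cdots)\Bigr| \le q^{-t}|V|^{m+1} = q^{-t}|M|,
$$
which is exactly the $q^{-t}$-uniformity of Definition \ref{un}. The main obstacle is the rank lower bound for $R_\xi$: one must show that an arbitrary nonzero character of the coefficient space of $\bar P\circ\phi$ pulls back to a high-rank polynomial on $V^{m+1}$, uniformly in $\xi$; this is where the combinatorics of multilinear derivatives, the graded-basis normalization of Claim \ref{ord}, and the rank/partition-rank comparison all have to be combined carefully, and where the dependence of the final bound on $m$ and $c$ is generated. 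The bookkeeping of which coefficient functionals $c_{\un a}$ correspond to which repeated-variable specializations of $\ti P_i$, and ensuring that freezing variables loses only a bounded amount of rank, is the technical heart; everything else is the routine Fourier expansion and the black-box application of Theorem \ref{bias-rank-1}.
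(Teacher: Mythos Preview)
Your proposal is correct and follows essentially the same approach as the paper: expand the fiber size by characters over the coefficient space $\mcP_{\bar d}(k^m)$, isolate the main term, and show that for every nonzero character $\xi$ the resulting polynomial $R_\xi$ on $V^{m+1}$ has small bias by relating its rank (or directly its Gowers norm) back to $r_{nc}(\bar P)$ and invoking Theorem~\ref{bias-rank-1}. The paper packages this as Theorem~\ref{need} and then Proposition~\ref{size}; your ``key step'' is exactly the content of the paper's Claims on the rank of the coefficient collection.

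One point worth flagging: your sketch of the rank lower bound (``freeze all but the relevant $v_j$ to generic values and get $\ti P_i$ up to lower order'') is closest to the paper's first argument, which restricts to the diagonal $v_{j_1}=\cdots=v_{j_d}$ and recovers $d!\,P_i$; this requires $\operatorname{char}(k)>d$, so the ``nonzero scalar'' you invoke can fail when $p\le d$ even though $|k|>d$. The paper therefore gives a second argument (Claim~\ref{nc-version}) valid in all characteristics, which bypasses the rank of $R_\xi$ entirely and bounds $|\mE e_q(R_\xi)|$ directly by $d$ applications of Cauchy--Schwarz, reducing to $\|e_q(\bar b\cdot\bar P)\|_{U_d}$ for some nonzero $\bar b$. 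Your write-up would need this second route to cover the small-characteristic case; otherwise the argument is complete.
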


To formulate Remark \ref{uniform-remark} precisely we introduce a number of additional definitions.

\begin{definition}\label{b1}  Fix  an affine hypersurface $W\subset V$. We denote by $\mZ _{\bar P}$ the variety  of affine $m$-dimensional subspaces $L \subset X _{\bar P}\cap W$ and by 
$\mY  _{\bar P}\subset \mZ  _{\bar P} $ the subvariety consisting of $L \subset \mX _{\bar P}\cap W$ such that there is no $(m+1)$-dimensional affine subspace  $M\subset \mX _{\bar P}, M\not \subset W$ containing $L$. 
\end{definition}

\begin{theorem}\label{B} For any $\bar d$, any $m,t\geq 1$, there exists an effective bound $\ti r(m,t, \bar d)$ with the following property.
 For any finite field $k=\mF _q$, any $k$-vector space $V$ and any family  $\bar P\in \mcP_{\bar d}(V)$ of  nc-rank $\geq \ti r(m,t, \bar d)$, we have that
$\frac{|Y_{\bar P}|}{|Z_{\bar P}|}\leq q^{-t}$.
\end{theorem}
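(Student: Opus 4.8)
The plan is to reduce the estimate on $|Y_{\bar P}|/|Z_{\bar P}|$ to a counting statement about affine maps, and then apply the uniformity machinery of Theorem \ref{A1} together with Theorem \ref{Acc}. First I would parametrize: a point of $Z_{\bar P}$ is (up to the action of affine automorphisms of $\mA^m$) an affine map $\phi:\mA^m\to V$ with image in $W$ and $\bar P\circ\phi=0$, i.e. a point of the fiber $\kk_{\bar P}^{-1}(0)$ after intersecting with the linear condition ``$\operatorname{im}\phi\subset W$''. Likewise $Y_{\bar P}$ consists of those $L$ which admit no one-dimensional affine extension inside $\mX_{\bar P}$ leaving $W$; the point is to show that for high nc-rank $\bar P$, \emph{almost every} $L\in Z_{\bar P}$ \emph{does} extend, so the exceptional set $Y_{\bar P}$ is tiny.

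The key step is the extension count. Fix $L\subset X_{\bar P}\cap W$, choose coordinates so $L=\{(x_1,\dots,x_m,0,\dots)\}$, and consider affine lines through a direction $v\notin W$ spanning with $L$ an $(m+1)$-dimensional $M$. Requiring $M\subset \mX_{\bar P}$ means: for the affine map $\psi:\mA^{m+1}\to V$ extending $\phi$ by sending the last coordinate to $v$ (plus a shift), $\bar P\circ\psi\equiv 0$. So I count pairs $(L,M)$ with $L\in Z_{\bar P}$, $M$ an $(m+1)$-dimensional affine subspace of $\mX_{\bar P}$ not contained in $W$, $L\subset M$; projecting to the first coordinate this is essentially $|\kk_{\bar P}^{(m+1)\,\prime}(0)|$ where $\kk^{(m+1)\prime}$ is the restriction of $\kk_{\bar P}$ on $\text{Aff}_{m+1}(\mV)$ to the locus with image not inside $W$. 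Both $|Z_{\bar P}|$ and this count of extendable configurations are, by Theorem \ref{A1} applied with $m$ and with $m+1$ (and with $t$ replaced by a slightly larger value), equal to the ``expected'' values $|\text{Aff}_m(V)|\cdot q^{-\dim \mcP_{\bar d}(\mA^m)}$ and $|\text{Aff}_{m+1}(V)'|\cdot q^{-\dim\mcP_{\bar d}(\mA^{m+1})}$ up to multiplicative error $1+O(q^{-t})$; here I should also subtract the (negligible, again by uniformity applied to the hypersurface $W$ which is itself of high rank since $\bar P$ is, or by a direct dimension count) contribution of $M\subset W$. Dividing, one gets that the number of $L\in Z_{\bar P}$ admitting at least one good extension $M$ is $\geq (1-q^{-t})|Z_{\bar P}|$, whence $|Y_{\bar P}|\leq q^{-t}|Z_{\bar P}|$ after adjusting constants in $\ti r$. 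Universality from Theorem \ref{Acc}(1) guarantees that the relevant fibers are nonempty and of the predicted dimension, so the ratios are genuinely comparable rather than vacuous.

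The main obstacle I anticipate is \textbf{bookkeeping the condition ``$M\not\subset W$'' uniformly}. The set $Z_{\bar P}$ lives inside the hypersurface $W$, so one is really doing relative counting: $\text{Aff}_m$ into $W$, and $\text{Aff}_{m+1}$ into $V$ meeting $W$ in a prescribed way. To apply Theorem \ref{A1} cleanly I would want to present ``affine $m$-subspaces of $X_{\bar P}\cap W$'' as fibers of the map $\kk$ for the enlarged family $\bar P\cup\{\ell_W\}$ where $\ell_W$ is the linear form cutting out $W$ — but $\bar P\cup\{\ell_W\}$ contains the low-degree (degree-$1$) polynomial $\ell_W$, which a priori wrecks the rank hypothesis. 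The fix is the reduction already sanctioned in the remark after Claim \ref{ord}: restrict to the affine subspace $W$ first and work with the restriction $\bar P|_W$, checking (a short lemma) that the nc-rank of $\bar P|_W$ is at least $r_{nc}(\bar P)-O(1)$ for a generic, hence for our fixed, hyperplane $W$ — this is where one spends the ``loss'' that forces $\ti r$ to be slightly larger than the $r$ of Theorem \ref{A1}. Once the restriction-of-rank lemma is in place, the rest is two applications of Theorem \ref{A1} and arithmetic, so the estimate is effective with the same quality of bounds as in Theorem \ref{A1}.
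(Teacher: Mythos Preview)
Your counting via Theorem~\ref{A1} gives only the \emph{first moment} of
\[
N(L):=\#\{M : L\subset M\subset X_{\bar P},\ M\not\subset W,\ \dim M=m+1\}.
\]
The pair count you compute equals $\sum_{L\in Z_{\bar P}} N(L)$, and dividing by $|Z_{\bar P}|$ yields the average $\mE_L[N(L)]$, not $\#\{L:N(L)\ge 1\}$. Knowing the average is large says nothing about $|Y_{\bar P}|=\#\{L:N(L)=0\}$ without a concentration input; a priori all the extensions could pile onto a small fraction of the $L$'s. So the step ``Dividing, one gets that the number of $L$ admitting at least one good extension is $\ge(1-q^{-t})|Z_{\bar P}|$'' is a genuine gap.

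What is missing is a second-moment bound, and this is exactly what the paper supplies. The paper fixes $L$ (parametrized by $(x,\bar y)$), writes the extension count $A(x,\bar y)$ as a character sum over the extra direction $z$, and then controls $\mE_{x,\bar y}\bigl|\mE_z e_q(T(x,\bar y,z))\bigr|^2$ for each nontrivial character via Lemma~\ref{complexity}: an iterated Cauchy--Schwarz argument bounding such an average of products along the pattern $x+\sum a_{t_i}y_i+a_{t_{m+1}}z$ by a single Gowers norm $\|e_q(\bar c\cdot\bar P^e)\|_{U_e}$, which is small by Theorem~\ref{bias-rank-1}. This variance bound gives $A(x,\bar y)>0$ for $q^{-s}$-almost all $(x,\bar y)$, i.e.\ Proposition~\ref{line-plane}, and hence Theorem~\ref{B}. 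If you tried to stay within the Theorem~\ref{A1} framework and compute a second moment by counting triples $(L,M_1,M_2)$, note that this is \emph{not} the fiber of $\kk_{\bar P}$ at $0$ at level $m+2$: one only requires $\bar P$ to vanish on the two $(m+1)$-dimensional faces, not on the full $(m+2)$-dimensional span, so you would still need a separate equidistribution statement for that constrained system --- which is essentially what Lemma~\ref{complexity} delivers in character-sum form. Your remark about restricting to $W$ with only $O(1)$ loss of nc-rank is correct (Remark~\ref{norm-bias-rank}(\ref{subspace-rank})), but it addresses only the denominator $|Z_{\bar P}|$, not the distribution of $N(L)$ over $L$.
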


\begin{remark} 
The condition that $r(P)\gg1$ does not imply that $Y _{\bar P} =\emp$.
To see that $r(P)\gg1$ does not imply the emptiness of the set  $Y _{\bar P} $ consider the case
 $V=k^n,W=\{w \in V: w_n=0\}, m=1, P=\sum _{i=1}^ {n-1}x_i^d+x_n$ and $L=ke_1$.
 \end{remark}

\subsection{Applications} In this subsection we provide precise formulations of Theorems \ref{Acc}, \ref{ext} and \ref{N}.

\subsubsection{The surjectivity over algebraically closed fields} 
We start with  a formalization of Theorem \ref{Acc}.

\begin{theorem} [Effective Stillman conjecture] \label{AC} There exists an effective bound $r(m,t, \bar d)$ with the following property.

For any sequence $\bar d$ and  $m,t\geq 1$,  any algebraically closed  field $k$,  any $k$-vector space $V$ and any family $\bar P\in \mcP_{\bar d}(V)$ of nc-rank $\geq r(m,t, \bar d)$ the following holds
\begin{enumerate}
\item The map $\kk _{\bar P}$ is surjective.
\item All fibers of the morphism  $\ti \kk _{\bar P}$ are of the same dimension.
\item The morphism  $\ti \kk _{\bar P}$  is flat.
\item $\bar P\subset k[V^\vee]$ is a regular sequence.
\end{enumerate} 
\end{theorem}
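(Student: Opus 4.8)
The plan is to deduce Theorem~\ref{AC} from the analytic uniformity statement (Theorem~\ref{A1}) by a transfer-to-algebraically-closed-fields argument, and then extract the geometric consequences (2)--(4) from (1) by standard commutative algebra. First I would reduce to (1): once the map $\kk_{\bar P}$ is known to be surjective for \emph{every} $m$ (with a rank bound depending on $m$), I claim all the rest follows. Indeed, for (2)--(3), the fibers of $\ti\kk_{\bar P}$ over $k$-points are, by Claim~\ref{ord}, translates of each other up to the ambiguity in the linear part, and surjectivity of $\kk_{\bar P}$ for all $m$ forces every fiber to be nonempty of the expected dimension $\dim\text{Aff}_m(\mV) - \dim\mcP_{\bar d}(\mA^m)$; flatness of a morphism between smooth varieties with equidimensional fibers onto a smooth base is then miracle flatness. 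For (4), recall $\bar P$ is a regular sequence in $k[V^\vee]$ iff the codimension of $\mX_{\bar P}$ equals $c$, and high nc-rank (hence high rank, via Lemma~\ref{rank-p-rank} together with the low-degree $\ti P$ estimate, or directly) implies $\codim\mX_{\bar P}=c$; this is a clean consequence of the fact that a high-rank polynomial cannot be written using few lower-degree ones, so no nontrivial syzygy can drop the codimension.

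The heart is therefore statement (1): for any $\bar d, m$ there is an effective $r$ so that over any algebraically closed $k$, any $\bar P$ of nc-rank $\ge r$ has $\kk_{\bar P}$ surjective, i.e. every $\bar Q\in\mcP_{\bar d}(\mA^m)$ is of the form $\bar P\circ\phi$. I would prove this first over finite fields and then transfer. Over $k=\mF_q$: Theorem~\ref{A1} with $t=1$ says $\kk_{\bar P}$ is $q^{-1}$-uniform, hence for every $\bar Q$ we have $|\kk_{\bar P}^{-1}(\bar Q)|\ge (1-q^{-1})|\text{Aff}_m(V)|/|\mcP_{\bar d}(k^m)|>0$, so $\kk_{\bar P}$ is surjective on $\mF_q$-points; in particular $\ti\kk_{\bar P}$ is dominant and its fibers are nonempty. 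To pass to algebraically closed fields I would use a spreading-out / Lefschetz-principle argument: the statement ``$r_{nc}(\bar P)\ge r \implies \ti\kk_{\bar P}$ is surjective on closed points'' is a constructible condition, and the nc-rank lower bound is preserved under specialization in a suitable sense, so it suffices to verify it over the algebraic closures of finite fields $\ov{\mF_q}$ for all large $q$. Over $\ov{\mF_q}$, surjectivity on $\mF_{q^n}$-points for all $n$ (which follows from applying the finite-field case to $\bar P$ viewed over $\mF_{q^n}$, noting the nc-rank does not drop under finite extension when the rank is large enough) gives a dense set of closed points in every fiber, and since the fibers are closed subvarieties this forces them to be nonempty — giving surjectivity on closed points.

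The main obstacle I expect is the transfer step: ensuring that the nc-rank is robust under base change between fields of the \emph{same} characteristic, and in particular that a bound on nc-rank over $\ov{\mF_q}$ can be obtained from a bound on nc-rank over $\mF_q$ (or over $\mQ$). The subtlety is that rank can in principle drop under field extension, so I would need a lemma saying that for the purposes of the quantitative bounds, $r_{nc}$ over $k$ and over $\ov k$ differ by at most a constant depending only on $d$ — this is where one really uses that the relevant notion is partition-rank of the multilinear form $\ti P$ (Lemma~\ref{rank-p-rank}), whose behavior under extension is controlled. A secondary technical point is handling characteristic $0$ or large positive characteristic fields that are not $\ov{\mF_q}$; here a standard Lefschetz principle (every such situation is ``seen'' by some $\ov{\mF_q}$ with $q$ large, or directly by a model-theoretic transfer) closes the gap, using that all bounds in Theorem~\ref{A1} are uniform in $q$. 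Once surjectivity is in hand over all algebraically closed fields, steps (2)--(4) are, as indicated above, routine applications of miracle flatness and the codimension criterion for regular sequences.
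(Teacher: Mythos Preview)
Your strategy for part (1) is essentially the paper's: establish surjectivity over finite fields via Theorem~\ref{A1}, then transfer to algebraically closed fields by a model-theoretic argument. The paper makes this explicit by setting up, for each fixed $n=\dim V$, a constructible locus $\mcY\subset \mcP_d(\mV)^c$ of high-rank families and the constructible locus $\mcR\subset\mcY$ where $\ti\kk_{\bar P}$ fails to be surjective; it then shows $\mcR(\bar\mF_p)=\emp$ directly (your density-of-$\mF_{q^n}$-points argument), invokes completeness of $\mathrm{ACF}_p$, and passes to characteristic $0$ via {\L}o\'s on an ultraproduct of the $\bar\mF_p$. Your ``obstacle'' about nc-rank under base change is not one: the direction you need is the trivial inequality $r_{nc}^{\,k}(\bar P)\ge r_{nc}^{\,\bar k}(\bar P)$ (a decomposition over the small field is already one over the large field), so if $\bar P$ has high nc-rank over $\bar\mF_p$ then it has at least as high nc-rank over every $\mF_{q}$ containing its coefficients, and Theorem~\ref{A1} applies there.

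There is, however, a genuine gap in your treatment of part (2). The assertion that the fibers of $\ti\kk_{\bar P}$ are ``translates of each other up to the ambiguity in the linear part, by Claim~\ref{ord}'' is not justified: Claim~\ref{ord} is only about choosing a degree-graded basis of $L(\bar P)$ and says nothing about the fibers of $\phi\mapsto \bar P\circ\phi$, which is a degree-$d$ (not affine) map in $\phi$. And surjectivity alone, even for all $m$, does not force every fiber to have the expected dimension; it gives only the lower bound via upper semicontinuity. The paper obtains equidimensionality differently: for each $Q$ it uses the \emph{quantitative} uniformity of Theorem~\ref{A1} together with the Lang--Weil estimates to compute
\[
\dim\bigl(\ti\kk_{\bar P}^{-1}(Q)\bigr)=\lim_{m\to\infty}\frac{\log_q |\ti\kk_{\bar P}^{-1}(Q)(\mF_{q^{lm}})|}{lm}=\dim\text{Aff}_m(\mV)-\dim\mcP_{\bar d}(\mA^m),
\]
and then transfers this first-order statement to arbitrary algebraically closed fields exactly as in part (1). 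Once (2) is in hand, your miracle-flatness argument for (3) agrees with the paper's (Theorem 23.1 in \cite{M}), and (4) follows; your direct route to (4) via the codimension criterion for regular sequences in a polynomial ring is also fine and is in fact implicit in the same point-counting.
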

\begin{remark}\leavevmode
\begin{enumerate}
\item Parts (1) and (2) follow from Theorem \ref{A1}.
\item The part (3) follows from the parts (1),(2) and  Theorem 23.1 in [22]. The part (4)
is an immediate consequence of the part (3).

\item As shown in \cite{ah} the part (4) of the theorem implies the validity of an effective Stillman conjecture.
\end{enumerate}
\end{remark}

In the formulation of the next result we use notation introduced in  Definition \ref{b1}.

\begin{theorem}\label{BC}For any $\bar d$, $m,t\geq 1$, there exists effective bound $\ti r(m,t, \bar d)$ with the following property.
 For any algebraically closed  field $k$, a $k$-vector space $V$ and a family  $\bar P\in \mcP_{\bar d}(V)$  of nc-rank $\geq \ti r(m,t, \bar d)$, we have that
 $\dim (\mZ _{\bar P})-\dim(\mY _{\bar P})\geq t$.
\end{theorem}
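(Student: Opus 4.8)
The goal of Theorem \ref{BC} is to transfer the finite-field estimate of Theorem \ref{B} into a statement about dimensions of algebraic varieties over an algebraically closed field. The plan is to use a standard ``point-counting implies dimension'' argument, but run in reverse: the algebraic statement $\dim(\mZ_{\bar P})-\dim(\mY_{\bar P})\geq t$ is a constructible/first-order condition on $(V,\bar P)$, so it suffices to verify it after base change to the algebraic closures of finite fields, where Theorem \ref{B} applies.

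\textbf{Step 1: Reduction to finite fields.} Fix $\bar d$, $m$, $t$. The varieties $\mZ_{\bar P}$ and $\mY_{\bar P}$ are cut out by polynomial conditions whose coefficients are polynomial functions of the coefficients of $\bar P$ (membership of an affine $m$-plane in $\mX_{\bar P}\cap W$ is a closed condition; the non-extendability condition defining $\mY_{\bar P}$ inside $\mZ_{\bar P}$ is constructible, being the complement of the image of an incidence variety of $(m+1)$-planes). Hence the inequality $\dim \mZ_{\bar P}-\dim\mY_{\bar P}\geq t$, for $\bar P$ ranging over families of a fixed nc-rank bound, is expressible by a first-order sentence in the language of fields (quantifying over the finitely many coefficients, using the definability of dimension of a constructible set in families). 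By the Lefschetz principle / completeness of the theory of algebraically closed fields of a given characteristic, it is enough to prove it for $k=\overline{\mF_q}$, and indeed it suffices to fix $\bar P$ defined over some finite subfield $\mF_{q^N}$.

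\textbf{Step 2: Point counting over growing finite fields.} For $\bar P$ defined over $\mF_{q}$ with nc-rank $\geq \ti r(m,t',\bar d)$ (with $t'$ chosen below), the nc-rank is insensitive to finite extension of scalars, so Theorem \ref{B} applies over every $\mF_{q^n}$: $|Y_{\bar P}(\mF_{q^n})|\leq q^{-nt'}|Z_{\bar P}(\mF_{q^n})|$. Now invoke the Lang--Weil type estimates: for a variety $\mS$ defined over $\mF_q$ of dimension $\delta$ with $e$ top-dimensional geometric components, $|S(\mF_{q^n})| = e\, q^{n\delta} + O(q^{n(\delta-1/2)})$. Applying this to $\mZ_{\bar P}$ and $\mY_{\bar P}$ and letting $n\to\infty$, the ratio $|Y_{\bar P}(\mF_{q^n})|/|Z_{\bar P}(\mF_{q^n})|$ behaves like a constant times $q^{n(\dim\mY_{\bar P}-\dim\mZ_{\bar P})}$; since this is bounded by $q^{-nt'}$ for all large $n$, we get $\dim\mZ_{\bar P}-\dim\mY_{\bar P}\geq t'$. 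Choosing $t'=t$ (or $t'=t$ plus a harmless slack to absorb the Lang--Weil error term) finishes the argument; the effective bound $\ti r(m,t,\bar d)$ in Theorem \ref{BC} is simply the one from Theorem \ref{B} with this adjusted $t'$.

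\textbf{Main obstacle.} The technical heart is Step 1: making precise that $\dim\mZ_{\bar P}-\dim\mY_{\bar P}\geq t$ is a first-order (or at least constructible-in-families) condition, uniformly in the ambient dimension $n=\dim V$. One must be a little careful because $n$ is unbounded, so strictly speaking one has a sequence of first-order sentences, one for each $n$; but for each fixed $n$ the Lefschetz reduction is legitimate, and the bound $\ti r$ does not depend on $n$, so this is harmless. An alternative that avoids model theory entirely is to argue directly with Lang--Weil over $\overline{\mF_p}$: spread $\bar P$ out over a finitely generated $\mZ$-algebra, specialize to closed points to land in finite fields, and use that dimension is preserved under such specialization for all but finitely many primes; this is the route I would actually take, since it keeps everything geometric and meshes cleanly with the Lang--Weil input already implicit in Theorem \ref{A1}. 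Either way, the only genuinely new ingredient beyond Theorem \ref{B} is bookkeeping, and I would present the Lang--Weil comparison as the core computation.
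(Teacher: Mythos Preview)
Your proposal is correct and follows essentially the same approach as the paper: the paper states only that ``the derivation of Theorem \ref{BC} from Theorem \ref{B} is completely analogous'' to its derivation of Theorem \ref{AC} from Theorem \ref{A1}, and that derivation is precisely the model-theoretic transfer (completeness of $ACF_p$ plus \L o\'s for characteristic $0$) combined with the Lang--Weil dimension computation you outline in Steps 1 and 2. Your ``alternative'' of spreading $\bar P$ out over a finitely generated $\mZ$-algebra is in fact exactly how the paper organizes the $AC$ argument (it introduces a constructible locus $\mR$ over $\mZ$ and shows $\mR(\bar\mF_p)=\emptyset$), so there is no real divergence of method.
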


\subsubsection{Extending weakly polynomial functions}
\begin{definition}\label{weak-def}\leavevmode
\begin{enumerate}
\item  Let  $V$ be a $k$-vector space  and  $X\subset V$. We say that a function $f:X \to k$ is {\it weakly polynomial} of degree $\leq a$ if the restriction $f_{|L}$ to any affine subspace  $L \subset X$ is a polynomial of degree $\leq a$.  
\item $X$ satisfies $\star ^a$ if any weakly polynomial function of degree $\leq a$ on $X$ is a restriction  of a polynomial function of degree $\leq   a$ on $V$.
\end{enumerate}
\end{definition}

Below is a formalization of Theorem \ref{ext} from the introduction. 

\begin{theorem}\label{w}For any $\bar d$ and $a\geq 1$ there exists an effective bound $r(\bar d,a)$ such for any field $k$ which is either finite with $|k|>ad$ or algebraically closed, a $k$-vector space $V$ and a family $\bar P\in \mcP _{\bar d}(V)$ of nc-rank $\geq r(\bar d,a) $
the subset $X_{\bar P}\subset V$ has the property $\star _a$.
\end{theorem}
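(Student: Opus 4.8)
The plan is to reduce the statement to an inductive argument on the degree $a$, using the high nc-rank hypothesis to control the affine subspaces contained in $X_{\bar P}$. The base case $a < \min_i d_i$ is essentially trivial: if the degree of the weakly polynomial function is smaller than all the $d_i$, then the locus $X_{\bar P}$ contains a very rich family of affine lines and planes (this is exactly the universality/surjectivity content of Theorem~\ref{A1} and Theorem~\ref{AC}), and a function that is polynomial of bounded degree on every such subspace must be the restriction of a global polynomial; one writes $f$ in terms of its values on a large generic-position affine subspace and checks the candidate polynomial $F$ agrees with $f$ everywhere by sliding affine lines. For the inductive step I would first establish the key geometric input: when $r_{nc}(\bar P) \gg 1$, through (almost) every point $x \in X_{\bar P}$ and in (almost) every direction there passes an affine line inside $X_{\bar P}$, and more generally the variety $\mZ_{\bar P}$ of $m$-dimensional affine subspaces of $X_{\bar P}$ is large while the ``bad'' locus $\mY_{\bar P}$ where such subspaces fail to extend is negligible — this is precisely Theorems~\ref{B} and~\ref{BC}. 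These statements let one propagate polynomiality from small subspaces to larger ones.

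The heart of the argument is the following. Given a weakly polynomial $f$ of degree $\le a$ on $X_{\bar P}$, I would first produce a \emph{candidate} global polynomial $F$ of degree $\le a$: restrict $f$ to a single affine subspace $L_0 \subset X_{\bar P}$ of dimension large compared to $a$ and $\bar d$ (which exists and is even universal by Theorem~\ref{AC}/Theorem~\ref{A1}), where $f_{|L_0}$ is an honest polynomial; extend that polynomial arbitrarily to $F \in \mcP_a(V)$. Then $g := f - F_{|X_{\bar P}}$ is again weakly polynomial of degree $\le a$ and vanishes on $L_0$. The goal becomes showing $g \equiv 0$. Here one uses that any point of $X_{\bar P}$ can be joined to $L_0$ by a chain of affine subspaces lying in $X_{\bar P}$, each of dimension large enough that $g$ restricted to it is a polynomial of degree $\le a$; since $g$ vanishes on a large piece (a subspace of dimension $> a$) of each such subspace it vanishes identically on it. The connectivity/chaining of affine subspaces inside $X_{\bar P}$ is supplied by the smallness of $\mY_{\bar P}$ relative to $\mZ_{\bar P}$ (Theorem~\ref{BC} in the algebraically closed case, Theorem~\ref{B} in the finite field case, where one must be careful that the ambient hypersurface $W$ in Definition~\ref{b1} is used only as a technical device and can be varied).

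The main obstacle, flagged in the remark after Theorem~\ref{ext}, is the \textbf{non-uniqueness of $F$} when $a > d$: there are many polynomials of degree $\le a$ restricting to a given weakly polynomial function on $X_{\bar P}$ (e.g.\ one can add $P_i \cdot (\text{anything of degree} \le a - d_i)$), so ``the'' candidate is only well-defined modulo the degree-$\le a$ part of the ideal generated by $\bar P$. Consequently the vanishing argument for $g = f - F_{|X_{\bar P}}$ cannot conclude $g \equiv 0$ as a polynomial — only that $g$ lies in this ideal. To handle this I expect one must run the induction more carefully: prove that $g$, being weakly polynomial of degree $\le a$ and vanishing on $X_{\bar P}$, actually agrees on $X_{\bar P}$ with an element of the ideal $(\bar P)$ of degree $\le a$; this is a Nullstellensatz-type statement in the spirit of Theorem~\ref{N}, and its proof again feeds on the high-rank hypothesis through Theorem~\ref{A1}. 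Thus the real structure is a simultaneous induction: (i) the extension property $\star^a$, and (ii) a graded Nullstellensatz ``$R_{|X_{\bar P}} = 0 \Rightarrow R \in (\bar P)$ up to degree $a$'', each of degree $a$ using the other at degrees $< a$, with the nc-rank threshold $r(\bar d, a)$ chosen to dominate the thresholds from Theorems~\ref{A1}, \ref{B}, \ref{BC} for all the finitely many auxiliary parameters that arise.
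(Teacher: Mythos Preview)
Your chaining step is where the argument breaks. If $g$ is weakly polynomial of degree $\le a$ on $X$, vanishes on an affine subspace $L_0\subset X$, and $L_0\subset M\subset X$ with $\dim M=\dim L_0+1$, then $g_{|M}$ is a polynomial of degree $\le a$ vanishing on the hyperplane $L_0\subset M$. That only gives $g_{|M}=l\cdot h$ with $\deg h\le a-1$; it does \emph{not} force $g_{|M}\equiv 0$, no matter how large $\dim L_0$ is. So the connectivity of affine subspaces in $X$ (Theorems~\ref{B},~\ref{BC}) does not by itself propagate the vanishing of $g$ from $L_0$ to all of $X$. Your final paragraph senses the trouble but then conflates two different objects: $g$ is a function on $X$, not a polynomial on $V$, so the statement ``$g$ lies in the ideal $(\bar P)$'' is a category error, and invoking a Nullstellensatz for polynomials vanishing on $X$ presupposes exactly what you are trying to prove (namely that $g=0$ on $X$, or that $g$ is the restriction of a polynomial).

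The paper's route is genuinely different and supplies the two missing ingredients. First, instead of an arbitrary $L_0$, it constructs an \emph{explicit} model hypersurface $\mX_n=\{\sum_i\mu(w_i)=0\}\subset \mW^n$ and proves $\star_a$ for $\mX_n$ by hand (Theorem~\ref{const}), exploiting its large symmetry group $(S_d)^n$ and the torus $T$ to decompose $\mcP_a^w(X_n)$ into character spaces and match each piece with an explicit polynomial. Second, the propagation from a linear section to all of $X$ is \emph{not} done by chaining but by a hyperplane extension lemma (Proposition~\ref{p-ext}, Lemma~\ref{l}): if $f$ vanishes on $X_S=X\cap l^{-1}(S)$, then on each slice $X_b$ the function $f$ is weakly polynomial of degree $\le a-|S|$ (here is where Theorems~\ref{B}/\ref{BC} actually enter, to put almost every plane in $X_b$ inside a $3$-space meeting $X_0$), so by induction on $a$ one extends on $X_b$ and multiplies back by $\prod_{s\in S}(l-s)$. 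Universality (Theorem~\ref{AC}) then realizes $\mX_n$ as a linear section of any high-rank $\mX$, and Corollary~\ref{extension1} lifts $\star_a$ from the section to $\mX$. The degree-drop-on-slices mechanism is the idea your proposal is missing; without it, chaining alone cannot close the gap.
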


\subsubsection{Nullstellensatz over $\mF _q$}

Below is a formalization of Theorem \ref{N} from the introduction.  

\begin{theorem}\label{main-null-int} For any sequence  $\bar d$, there exists  an effective bound $r(\bar d)>0$ such that the following holds. Let $k=\mF _q$ be a finite field and let $V$ be a $k$-vector space and $\bar P=\{ P_i\}$ be a family of $k$-polynomials of degrees $\leq d_i$ on $V$ of nc-rank greater than $r(\bar d)$. Then  any  polynomial $R$ of degree $ <q/\tilde d, \ \tilde d:=\prod _{i=1}^cd_i,$ such that $R(x)=0$ for each $x\in \mX _{\bar P}(k)$, belongs to the ideal $J(\bar P):=(P_1,\dots ,P_c)$.  
\end{theorem}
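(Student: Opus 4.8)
\textbf{Proof plan for Theorem \ref{main-null-int}.}

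The plan is to reduce the Nullstellensatz statement to the uniformity results of Theorem \ref{A1} (and its consequences) via a counting argument in the spirit of Proposition 9.2 of \cite{gt1}, with the key new input being that the relevant rank bound is \emph{independent of $\deg R$}. First I would set up the induction: it suffices to treat a single polynomial $P$ of degree $d$ and nc-rank $\geq r(d)$, deduce that $R$ vanishing on $X_P$ forces $P \mid R$, and then peel off the $P_i$ one at a time, checking at each stage that the hypersurface $\mX_{P_i}$ restricted to the previously cut-out variety still has high enough nc-rank — this is where Claim \ref{ord} and the remark following it let us assume $\bar P$ is in ``good position'' so that the successive quotient rings behave well and the divisibility $R \in (P_1,\dots,P_c)$ can be assembled from the one-variable statement. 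The bound $\tilde d = \prod d_i$ on $\deg R$ enters precisely to guarantee $d_i < q/(\text{something})$ at each peeling step so that the polynomial-vs-function identification $\mcP_a(\mV)(k) \to \mcP_a(V)$ remains a bijection throughout.

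For the core case of a single $P$: write, by polynomial division in the variable realizing the top-degree part, $R = PS + R'$ where no monomial of $R'$ is divisible by the chosen leading term of $P$; the goal is to show $R' = 0$. Since $R$ vanishes on $X_P$, so does $R'$ on $X_P$. Now I would run the character-sum / equidistribution argument: consider $\frac{1}{|V|}\sum_{x \in V} \psi(R'(x)) \mathbf{1}_{P(x)=0}$ and more generally twisted sums, expanding $\mathbf{1}_{P(x)=0} = \frac{1}{q}\sum_{t} \psi(tP(x))$. High nc-rank of $P$ (and of generic linear combinations $tP$, using Theorem \ref{bias-rank-1}/Conjecture \ref{conj-bias} as invoked in the paper) forces the relevant exponential sums $\sum_x \psi(tP(x) + (\text{stuff involving } R'))$ to be small unless $R'$ is itself ``controlled'' by $P$. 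The technical heart is to show that if $R'$ is not the zero function on $X_P$ then one can extract a genuine bias, contradicting high rank; here one must be careful because $R'$ can have degree much larger than $d$, so the standard Cauchy–Schwarz/van der Corput differencing must be done in a way that the \emph{number} of differencing steps — hence the rank demanded — depends only on $d$, not on $\deg R'$. The trick (and the improvement over \cite{gt1}) is that we never need $R'$ to have small rank in its own right; we only difference enough times to linearize the constraint coming from $P$, and then use that $\tilde P$ has high partition rank (Lemma \ref{rank-p-rank}) to conclude the bias can only come from $R' \equiv 0$ on $X_P$.

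Once $R'$ vanishes identically as a \emph{function} on $X_P \subset V$, I would upgrade this to $R' = 0$ as a \emph{polynomial}: since $\deg R' < q/\tilde d \leq q/d < q$ and $R'$ is reduced modulo the leading term of $P$, a weakly-polynomial-type argument (or directly Theorem \ref{w}, the $\star_a$ property, applied with $a = \deg R'$) shows that a polynomial of degree $< q$ that vanishes on the high-rank hypersurface $X_P$ and is ``$P$-reduced'' must be the zero polynomial; alternatively one invokes that $X_P$ is Zariski-dense enough in the appropriate sense because $P$ is irreducible (high rank $\Rightarrow$ irreducible over $\bar k$, by the rank-vs-singularity remarks). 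This gives $R = PS$, completing the single-polynomial case, and the induction described above finishes the proof with $r(\bar d)$ assembled from the single-polynomial bounds $r(d_i)$ together with the rank-drop estimates from restricting to subvarieties.

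\textbf{Main obstacle.} The delicate point I expect to fight with is exactly the degree-uniformity in the differencing step: ensuring that the number of derivatives $\Delta_{h_1}\cdots\Delta_{h_j}$ needed to force the contradiction depends only on $d = \deg P$ and not on $\deg R$, so that the required nc-rank $r(d)$ is genuinely independent of $R$. This is the crux of improvement (1)(a) over \cite{gt1}, and it is where the partition-rank machinery and the precise form of Theorem \ref{A1} (uniformity of $\kk_P$ with bounds independent of auxiliary data) must be deployed rather than a naive Weyl-sum bound.
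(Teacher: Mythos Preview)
Your approach is genuinely different from the paper's, and the paper's route is both simpler and avoids exactly the obstacle you flag as the crux.

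The paper does \emph{not} do polynomial division, does not run any character-sum argument involving $R$, and does not difference at all. Instead it argues by a pure point-count squeeze. Suppose $R$ vanishes on $\mX_{\bar P}(\mF_q)$ but $R \notin J(\bar P)$. High nc-rank forces $\bar P$ to be a regular sequence (Theorem \ref{AC}(4)), so $\mX_{\bar P}$ is a complete intersection of dimension $n-c$ and is irreducible; hence the scheme $\mY$ cut out by $\{P_1,\dots,P_c,R\}$ has dimension exactly $n-c-1$. A Bezout-type bound (Lemma \ref{rb}, proved by intersecting with generic combinations of the Frobenius hypersurfaces $x_j^q-x_j=0$) then gives $|\mY(\mF_q)| \le a\,\tilde d\, q^{\,n-c-1}$ where $a=\deg R$. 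On the other hand, since $R$ vanishes on all $\mF_q$-points of $\mX_{\bar P}$, one has $\mY(\mF_q)=\mX_{\bar P}(\mF_q)$, and the equidistribution Proposition \ref{size} gives $|\mX_{\bar P}(\mF_q)| \ge q^{\,n-c}(q-1)/q$. Comparing the two bounds yields $a\tilde d \ge q-1$, contradicting $a < q/\tilde d$. That is the whole proof.

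The gap in your plan is precisely the step you yourself identify as the main obstacle: you never actually explain how the number of differencing steps in the Cauchy--Schwarz / van der Corput phase can be made to depend only on $d$ and not on $\deg R'$. Saying ``we only difference enough times to linearize the constraint coming from $P$'' does not work, because the exponential sum you write down contains $R'$ in the phase, and controlling it by Gowers norms requires differencing $\deg R'$ times, not $d$ times. This is exactly why the Green--Tao argument in \cite{gt1} gives a rank bound depending on $\deg R$; your sketch does not overcome this. The paper sidesteps the issue entirely: $R$ appears only through its degree in the Bezout bound, and the only place high rank is used analytically is the (trivially $R$-independent) lower bound on $|\mX_{\bar P}(\mF_q)|$. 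There is also no need for your induction peeling off the $P_i$ one at a time --- the argument handles the full system at once.
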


\subsection{Acknowledgement}
We thank U. Hrushovski for his help with simplifying the proof of Theorem \ref{main-null-int}.


\section{Analysis}

In the main part of this  section we  prove Theorems \ref{A1} and \ref{B} using the results on equidistribution of high rank families of polynomials. These are based on the 
technique of the additive combinatorics. At the end of the section we apply  these results to  prove Theorems \ref{AC}, \ref{BC} and  \ref{Jan}.

\subsection{ Equidistribution of high rank families of polynomials} 

The most basic result is the following proposition 
on equidistribution of high rank families of polynomials:
\begin{proposition}\label{size}  
For any sequence $\bar d =(d_1,\dots ,d_c)$ and any $s\geq 1$ there exists an effective bound $r(\bar d,s)$ with the following property.
 For any finite field $k=\mF _q$, a $k$-vector space $V$ and a family $\bar P\in \mcP_{\bar d}(V)$ of nc-rank $\geq r(\bar d, s)$ the map $\bar P:V\to k^c$ is $q^{-s}$-uniform.
\end{proposition}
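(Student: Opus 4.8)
The plan is to reduce the statement to a bound on exponential sums, and then to invoke the relationship between bias and rank (Theorem \ref{bias-rank-1} / the proven Conjecture \ref{conj-bias}). First I would observe that, since the rank $r(\bar P)$ depends only on the linear span $L(\bar P)$, for any nonzero $\bar a = (a_1,\dots,a_c)\in k^c$ the linear combination $Q_{\bar a}:=\sum_i a_i P_i$ has $r_{nc}(Q_{\bar a})\ge r_{nc}(\bar P)\ge r(\bar d,s)$. The $q^{-s}$-uniformity of the map $\bar P\colon V\to k^c$ is equivalent, by Definition \ref{un}, to the assertion that
$$\Bigl| q^c \cdot |\{v\in V: \bar P(v)=\bar b\}| - |V| \Bigr| \le q^{-s} |V| \quad\text{for all } \bar b\in k^c.$$
The standard way to control the left-hand side is Fourier expansion over $k^c$: fixing a nontrivial additive character $\psi\colon k\to\mC^\times$, one writes
$$|\{v\in V: \bar P(v)=\bar b\}| = \frac{1}{q^c}\sum_{\bar a\in k^c} \psi(-\langle \bar a,\bar b\rangle)\sum_{v\in V}\psi\bigl(\langle \bar a,\bar P(v)\rangle\bigr),$$
so that the error term is $q^{-c}\sum_{\bar a\ne 0}\psi(-\langle\bar a,\bar b\rangle)\sum_{v\in V}\psi(Q_{\bar a}(v))$, and it suffices to bound $|\sum_{v\in V}\psi(Q_{\bar a}(v))|$ by $q^{-s-c}|V|$ uniformly in $\bar a\ne 0$, after which the triangle sum over the $q^c-1$ nonzero $\bar a$ absorbs into the constant (replacing $s$ by $s+c$ in the choice of $r$, which is harmless for an effective bound).

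The key step is therefore the single-polynomial bias estimate: for a polynomial $Q$ of degree $\le d$ on $V$ with $r_{nc}(Q)$ large, one has $|\mE_{v\in V}\psi(Q(v))|$ small. This is exactly the (inverse theorem / Gowers-norm) content referenced as Theorem \ref{bias-rank-1}: a nonnegligible bias $|\mE_v\psi(Q(v))|\ge q^{-s'}$ forces $\widetilde Q$ to have bounded partition rank, hence — via Lemma \ref{rank-p-rank} relating $pr$ and $r$ for tensors, so bounded $r(\widetilde Q) = r_{nc}(Q)$. Contrapositively, if $r_{nc}(Q)$ exceeds the resulting threshold $r(\bar d,s)$ (which depends polynomially on $q^{s}$-type data, hence effectively on $s$ and $d$ in the sense stated), then $|\mE_v\psi(Q(v))|< q^{-s-c}$. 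Applying this to $Q=Q_{\bar a}$ for each nonzero $\bar a$ and summing gives the claimed uniformity. I would also note at the outset the harmless normalization, via Claim \ref{ord} and the following remark, that $\bar P$ contains no affine polynomials, so that every $Q_{\bar a}$ has degree between $2$ and $d$ and Theorem \ref{bias-rank-1} genuinely applies (degree-$1$ polynomials are excluded since $r$ of a degree-$1$ polynomial is infinite, and in any case a single nonzero linear form is perfectly equidistributed).

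The main obstacle is bookkeeping rather than conceptual: one must make sure the rank hypothesis is used in the correct (non-classical) form, since over small characteristic the gap in Remark \ref{pd}(2) means that high classical rank is \emph{not} what drives equidistribution — it is $r_{nc}$, the rank of the associated tensor $\widetilde Q$, that controls the bias, and the inverse theorem is stated for $\widetilde Q$. The other point requiring care is the passage from the bias bound for each $Q_{\bar a}$ to the uniformity constant: one should fix the target exponent in Theorem \ref{bias-rank-1} to be $s+c$ (or even $s+c+1$) rather than $s$, so that after multiplying by $q^{-c}$ and summing $q^c-1 < q^c$ terms one still obtains an error $\le q^{-s}|V|$; since $c$ is part of the data $\bar d$, this only changes the effective bound $r(\bar d,s)$ by the allowed polynomial amount. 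With these two caveats handled, the Fourier argument is routine.
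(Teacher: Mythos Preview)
Your proposal is correct and follows essentially the same approach as the paper: Fourier-expand the indicator of the fiber $\{\bar P=\bar b\}$ over $k^c$, then bound each nontrivial character sum $\sum_v e_q(Q_{\bar a}(v))$ using Theorem \ref{bias-rank-1} together with Remark \ref{gcs}. Your treatment is in fact more careful than the paper's terse version, explicitly tracking the loss of $c$ in the exponent and the degree-$1$ edge case.
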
 

The main ingredient of this proof  comes from the relation between the bias of exponential sums and algebraic rank.

Let $k$ be a finite field, char$(k)=p$, $|k|=q$. Let  $V$ be a vector space over $k$. We denote $e_q(x) = e^{2 \pi i \psi(x)/p}$, where $\psi:k \to \mathbb F_p$ is the trace function. Then $e_q$ is a non trivial additive character. 
Let $P :V \to k$ be a polynomial of degree $d$. Recall that $\ti P(h_1, \ldots, h_d)== \Delta_{h_d} \ldots  \Delta_{h_1}P(x)$ is the multilinear form on $V^d$ associated with $P$, and we can write 
 \[
\ti P(h_1, \ldots, h_d)= \sum_{\bo \in \{0,1\}^d}( -1)^{|\bo|}P(x+\omega \cdot \bar h); \qquad |\bo| = \sum_{i=1}^d \bo_i, \quad 
\omega \cdot \bar h =  \sum_{i=1}^d \bo_ih_i.
\]

We denote by $\mE_{x \in S} f(x)$ the average  $|S|^{-1}\sum_{x \in S} f(x)$. 

\begin{definition}[Gowers norms \cite{gowers}]\label{uniform} For a function $g: V \to \mC$ we define the norm $\|g\|_{U_d} $ by \[\|g\|^{2^d}_{U_d} = \mE_{x,v_1, \ldots v_d\in V} \prod_{\omega \in \{0,1\}^d} g^{\omega}(x+\omega \cdot \bar v),\] where $g^{\omega}=g$  if $|\omega|$ is even and $g^{\bo}=\bar g$ otherwise.
 \end{definition}

\begin{definition}[Analytic rank] The analytic rank of a polynomial $P:V \to k$ of degree $d$ is defined by 
arank$(P)=-\log_q \|e_q(P)\|_{U_d}$. 
 \end{definition}

The key analytic tool in this paper is the following theorem relating bias and rank.

 \begin{theorem}[Bias-rank]\label{bias-rank-1} 
 \leavevmode
 \begin{enumerate}
\item Let $s,d>0$. There exists $r=r(s,d)$ such that for any finite field $k$ of size $q$, any vector space $V$ over $k$, any polynomial  $P:V \to k$ of degree $d$ the following holds. If $P$ is of nc-rank $>r$ then
\[
\|e_q(P)\|^{2^d}_{U_d} = |\mE_{v \in V}e_q(\ti P(h_1, \ldots, h_d))| < q^{-s}.
\]
\item  Let $r,d>0$. For any finite field $k$ of size $q$, any vector space $V$ over $k$, any polynomial  $P:V \to k$ of degree $d$, if
\[
 |\mE_{h_1, \ldots, h_d} e_q( \ti P(h_1, \ldots, h_d)| <q^{-r}
\]
for some polynomial, then $P$ is of partition rank $>r$.
\end{enumerate}
\end{theorem}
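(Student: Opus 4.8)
The plan is to treat the two parts separately, as they have quite different character. Part (2) is the ``easy'' direction: I would prove the contrapositive, showing that low partition rank of $\ti P$ forces $|\mE_{h_1,\dots,h_d} e_q(\ti P(h_1,\dots,h_d))|$ to be close to $1$. If $\ti P(x_1,\dots,x_d) = \sum_{i=1}^{r} Q_i((x_l)_{l\in J_i}) R_i((x_l)_{l\in J_i^c})$ with $r = pr(\ti P)$, then I would expand the average by conditioning on the values of the partial tuples feeding into the $Q_i$'s; for a fixed choice of those coordinates the remaining sum factors into a product of sums of the form $\mE e_q(cR_i(\cdots))$ over the complementary coordinates, each of which is either $0$ or $1$ depending on whether a certain linear combination vanishes. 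A union-bound / inclusion-exclusion over the $r$ constraints then shows the average is $\geq q^{-r}$ in absolute value (in fact $\geq 1 - (\text{something})/q$, but $q^{-r}$ is all that is claimed). One should be slightly careful here because the $J_i$ may overlap across different $i$, so the cleanest route is to induct on $r$: pull off one summand $Q_1 R_1$, split the expectation over the coordinates in $J_1$, and observe that for each fixed value of $Q_1$ on those coordinates the inner object is (a twist of) a tensor of partition rank $\leq r-1$.

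For Part (1), the hard direction, I would use the standard Gowers-norm / derivative machinery. The first observation is the identity $\|e_q(P)\|_{U_d}^{2^d} = \mE_{h_1,\dots,h_d} e_q(\ti P(h_1,\dots,h_d))$, which is immediate from Definition \ref{uniform} together with the formula for $\ti P$ as an alternating sum of translates of $P$ given just before the theorem; so the displayed equality in the statement is really a definition-unwinding and only the inequality $< q^{-s}$ needs work. The core step is a Cauchy--Schwarz / van der Corput argument: bounding $\|e_q(P)\|_{U_d}$ from below by $q^{-s}$ is equivalent, after $d-1$ applications of the derivative-and-Cauchy--Schwarz step, to saying that the multilinear form $\ti P$ itself is biased, i.e. $|\mE_{h_1,\dots,h_d} e_q(\ti P)|$ is not small. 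One then invokes the known structural result --- this is essentially the content flagged in the final Remark of the introduction, the bias-implies-low-rank theorem for multilinear forms (proved in \cite{gt1} over $\mF_q$ in the relevant range and in the sharp polynomial form in \cite{Mi}, \cite{janzer}) --- to conclude that a biased $\ti P$ must have bounded partition rank, hence bounded rank by Lemma \ref{rank-p-rank}, hence $P$ has bounded nc-rank. Taking $r(s,d)$ larger than this bound gives the theorem; the doubly-exponential dependence on $d$ and polynomial dependence on $s$ come from tracking constants through Lemma \ref{rank-p-rank} (the $4^d$ factor) and through the bias--rank inequality for tensors.

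I expect the genuine obstacle to be entirely concentrated in Part (1), and specifically in the passage from ``$e_q(P)$ has large $U_d$ norm'' to ``$\ti P$ has small analytic rank as a $d$-linear form'' to ``$\ti P$ has small partition rank'': the first link is a clean inductive Cauchy--Schwarz but requires care about which variables are being differentiated and the fact that $\ti P$ is the full $d$-th derivative (so the $U_d$ norm of $e_q(P)$ literally equals an exponential sum in $\ti P$), and the second link is a black-box appeal to a deep theorem in additive combinatorics whose effective (polynomial-in-$s$) form is exactly what is cited in the introduction as the single quantitative input the whole paper rests on. Part (2), by contrast, is elementary and self-contained, requiring only the expansion of a multilinear exponential sum and a counting argument, so I would write it out in full and merely cite the relevant bias-implies-low-rank statement for Part (1), noting precisely how the constants assemble.
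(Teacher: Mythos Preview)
Your proposal is correct and matches the paper's approach: the paper's own proof of Theorem~\ref{bias-rank-1} is nothing more than a citation---Part~(1) to \cite{gt1,kl,bl,hhl} and Part~(2) to \cite{kz-approx,lovett-rank}---followed by an appeal to Lemma~\ref{rank-p-rank} to pass between rank and partition rank, which is exactly the black-box structure you describe.

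One small redundancy: in your plan for Part~(1) you speak of ``$d-1$ applications of the derivative-and-Cauchy--Schwarz step'' to pass from a lower bound on $\|e_q(P)\|_{U_d}$ to bias of $\ti P$, but you have already (correctly) observed in the preceding sentence that $\|e_q(P)\|_{U_d}^{2^d}=\mE_{h_1,\dots,h_d}e_q(\ti P(h_1,\dots,h_d))$ is an identity coming straight from the definitions---no Cauchy--Schwarz is needed there, the two quantities are literally equal. The repeated Cauchy--Schwarz is what one uses to compare $\|e_q(P)\|_{U_1}$ with $\|e_q(P)\|_{U_d}$ (as in Remark~\ref{gcs}), not to unwind the $U_d$ norm itself. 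With that trimmed, your chain ``large bias of $\ti P$ $\Rightarrow$ low partition rank of $\ti P$ (cited) $\Rightarrow$ low rank of $\ti P$ (Lemma~\ref{rank-p-rank}) $\Rightarrow$ low nc-rank of $P$'' is precisely what the paper does. Your proposed direct inductive argument for Part~(2) is a reasonable reconstruction of what \cite{kz-approx,lovett-rank} actually prove; the paper simply cites them.
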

 
\begin{proof}
Part (1) was proved  for partition rank in increasing generality in \cite{gt1,kl,bl}. The most general version of the first part can be found at the survey \cite{hhl} (Theorem 8.0.1). Part (2) for was observed in \cite{kz-approx}, \cite{lovett-rank}.  Now the Theorem follows from Lemma \ref{rank-p-rank}.
\end{proof}

\begin{remark}\label{conj-bias} The dependence of $r$ on $s$ in (1) is polynomial, and double exponential in $d$. This was shown for  $d=2,3$ in \cite{s-h}, for  $d=4$ in  \cite{lampert} and in full generality in  \cite{Mi} (A similar bound but with a weak dependence in $|k|$ was proved independently in \cite{janzer}).
\end{remark}

\begin{remark}\label{gcs} By repeated applications of Cauchy-Schwartz  we have that 
\[
|\mE_{x \in V} e_q(P(x))| \le \|e_q(P)\|_{U_d}.
\]
\end{remark}

\begin{remark}\label{norm-bias-rank}
\leavevmode
\begin{enumerate}
\item  We use Claim \ref{pd} in our proof of Theorems \ref{A1} and \ref{B}.
\item\label{subspace-rank}
Let $P:V\to k$ be a polynomial of degree $d$ and rank $R$, and let $W\subset V$ be a subspace of codimension $s$. Then the rank of $P_{|W}$ is $\geq R-s.$ 
\begin{proof} We may assume that $\mV=\mA ^n$ and $\mW \subset \mV$ consists of vectors $(x_i),1\leq i\leq n$ such that $x_i=0$ for $i\leq s$. If the rank of $P_{|W}$ is $< R-s $ we can find polynomials $Q_j,R_j$,  with $ j\leq r$, fro some $r< R-s$ of degrees $<d$ such that $S_{|W} \equiv 0$ where  
$ S :=P-\sum _{j=1}^rQ_jR_j$. Since $S_{|W} \equiv 0$ there exists polynomials $T_i, 1\leq i\leq s$ of degrees $<d$ such that $S =\sum_{i=1}^sx_iT_i$.
\end{proof}
\end{enumerate}
\end{remark}

Now we can prove Proposition \ref{size}.
\begin{proof}
The number of points on $X_{\bar P}^{\bar b} = \{x: \bar P(x)=\bar b\}$ is given by 
\[
q^{-c}\sum_{\bar a \in k^c}\sum_{x \in V} e_q\left(\sum_{i=1}^c a_i(P_i(x)-b_i) \right).
\]
By Theorem \ref{bias-rank-1} and Remark \ref{gcs}, for any $s>0$ we can choose $r$ so that for any $\bar a \ne 0$ we have 
\[
\left|\sum_{x \in V} e_q\left(\sum_{i=1}^c a_i(P_i(x)-b_i)\right) \right| < q^{-s}|V|.
\]
\end{proof}

\subsection{Proof of Theorem \ref{A1}}

Let $\epsilon>0$ we say that a property holds for $\epsilon$ a.e.$s \in S$ if it holds for all but $(1-\epsilon)|S|$ of the elements in $S$. 
\begin{theorem}\label{need}
For any $\bar d$ there exists $C=C(\bar d)$ such that for any $s,m>0$ there exists an effective bound  $r=r(s,\bar d)$ such that if $\bar P=(P_i)_{ 1 \le i \le c}$ is a collection of polynomials  on $V=k^n$  with $\deg P_i =d_i$ and the nc-rank $\bar P$ is  $> r$ then:
\begin{enumerate}
\item  For any collection of polynomials $\bar R=(R_i)_{ 1 \le i \le c}$, with $R_i:k^m \to k$  of degree $d_i$,  there exist an affine map $w:k^m \to k^n$ such that 
$\bar P(w(x))=\bar R(x)$.  Furthermore, if we denote by $n_{\bar R}$ the number of such affine maps, then for any $\bar R_1, \bar R_2$ as above $|1-n_{\bar R_1}/n_{\bar R_2}| <q^{-s+m^C}$. 
\item  If $\bar P$ is homogeneous, then for any homogeneous collection $\bar R=(R_i)_{ 1 \le i \le c}$,  $R_i:k^m \to k$  of degree $d_i$,  there exist a linear map $w:k^m \to k^n$ such that $\bar P(w(x))=\bar R(x)$.  Furthermore, if we denote by $n_{\bar R}$ the number of such linear maps, then for any $\bar R_1, \bar R_2$ as above $|1-n_{\bar R_1}/n_{\bar R_2}| <q^{-s+m^C}$.
\end{enumerate}
\end{theorem}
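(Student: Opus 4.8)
The plan is to reduce everything to counting, and to count via additive characters exactly as in the proof of Proposition~\ref{size}, but now over the space $\mathrm{Aff}_m(\mV)=\mathrm{Aff}_m(k^m,k^n)$ of affine maps (or, in part~(2), linear maps) $w:k^m\to k^n$. Fix a target collection $\bar R=(R_i)$ with $R_i\in\mcP_{d_i}(k^m)$. For each $w$, the condition $\bar P\circ w=\bar R$ is a system of polynomial identities in the coordinates on $k^m$; writing each polynomial $\bar P(w(x))-R_i(x)$ in a fixed monomial basis $\{x^\alpha\}_{|\alpha|\le d_i}$ of $\mcP_{d_i}(k^m)$, the number $n_{\bar R}$ of solutions $w$ equals
\[
n_{\bar R}=q^{-N}\sum_{\bar a}\ \sum_{w\in\mathrm{Aff}_m(\mV)} e_q\Big(\sum_{i,\alpha} a_{i,\alpha}\big(\operatorname{coeff}_\alpha(P_i\circ w)-\operatorname{coeff}_\alpha(R_i)\big)\Big),
\]
where $N=\sum_i\binom{m+d_i}{d_i}$ (adjusted to $\binom{m+d_i-1}{d_i}$ in the homogeneous/linear case) is the total number of coefficient conditions, and $\bar a=(a_{i,\alpha})$ ranges over $k^N$. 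The $\bar a=0$ term contributes $q^{-N}|\mathrm{Aff}_m(\mV)|$, the ``expected'' main term, which is independent of $\bar R$; the content of the theorem is that all the $\bar a\ne0$ terms are small, with total error bounded by $q^{-s}|\mathrm{Aff}_m(\mV)|$ after one loses the factor $q^{m^C}$ coming from $|\bar a|\le q^N$ and $N\le m^C$ for fixed $\bar d$. Part (1) then follows: $n_{\bar R}=q^{-N}|\mathrm{Aff}_m(\mV)|\,(1+O(q^{-s+m^C}))$, which is in particular positive, so an affine $w$ exists; and the ratio estimate $|1-n_{\bar R_1}/n_{\bar R_2}|<q^{-s+m^C}$ is immediate. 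Part (2) is the same computation with $w$ ranging over linear maps and $\alpha$ over multidegrees of size exactly $d_i$.

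The heart of the matter is thus the character-sum bound: for a fixed nonzero $\bar a$, one must show
\[
\Big|\ \mE_{w\in\mathrm{Aff}_m(\mV)}\ e_q\big(S_{\bar a}(w)\big)\Big|<q^{-s-N}
\]
where $S_{\bar a}(w):=\sum_{i,\alpha}a_{i,\alpha}\operatorname{coeff}_\alpha(P_i\circ w)$ is a polynomial function of the entries of $w$. Now, $\operatorname{coeff}_\alpha(P_i\circ w)$ is, up to a nonzero constant, obtained by differentiating $P_i\circ w$ in the $x$-variables and evaluating at $0$ — i.e.\ it is a linear combination of multilinear expressions $\ti P_i(w_{j_1},\dots)$ in the columns/rows of the linear part of $w$ together with the translation vector; in any case $S_{\bar a}(w)$ is a polynomial of degree $\le d$ in the coordinates of $w$ whose top-degree behaviour is governed by the multilinear forms $\ti P_i$. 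The key step is to show that the polynomial $w\mapsto S_{\bar a}(w)$ on $\mathrm{Aff}_m(\mV)\cong k^{n(m+1)}$ has nc-rank bounded below by (roughly) $\frac{1}{m^{O(1)}}\,r_{nc}(\bar P)$ minus an $m$-bounded loss, whenever $\bar a\ne0$. Once this is in hand, Theorem~\ref{bias-rank-1}(1) (i.e.\ the bias--rank theorem applied to the polynomial $S_{\bar a}$ on $k^{n(m+1)}$) together with Remark~\ref{gcs} gives the required smallness, with $r(s,\bar d)$ depending effectively on $s$, $N\le m^C$, and $\bar d$; and one absorbs $C=C(\bar d)$ so that the stated clean form $q^{-s+m^C}$ holds.

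The main obstacle is precisely this rank lower bound for $S_{\bar a}\circ(\text{substitution } w)$: one needs that composing the high-nc-rank family $\bar P$ with a ``generic enough'' family of affine maps and then extracting a nonzero coefficient combination cannot collapse the rank by more than a controlled amount. I would argue it as follows. First, by Claim~\ref{ord} I may assume $\bar P$ is arranged by degree and each graded piece has rank $\ge r_{nc}(\bar P)$, so it suffices to treat a single nonzero $P:=\sum_i b_iP_i$ of pure degree $d$ with $r_{nc}(P)$ large and show that for each multidegree $\alpha$ with $|\alpha|=d$ the polynomial $w\mapsto\operatorname{coeff}_\alpha(P\circ w)$ has large nc-rank — this coefficient is, up to a constant, $\ti P$ evaluated at a tuple of the rows of the linear part of $w$ selected according to $\alpha$, i.e.\ a restriction/specialisation of the high-partition-rank tensor $\ti P$ (Lemma~\ref{rank-p-rank}) to a coordinate subspace of $\mathrm{Aff}_m(\mV)$; by Remark~\ref{norm-bias-rank}(\ref{subspace-rank}), restriction to a subspace of codimension $\le nm$-independent amount — actually here one restricts to a sub-\emph{tensor} obtained by setting some arguments equal, which one handles by the standard fact that such specialisations drop partition rank by at most an $m$-bounded factor — the rank stays $\gtrsim r_{nc}(P)$. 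For the lower-degree pieces and the linear translation part one argues similarly, noting these only add an $m$-bounded perturbation of lower degree which cannot decrease the rank of the degree-$d$ part by more than an $m$-bounded amount. Assembling, $r_{nc}(S_{\bar a})\gtrsim r_{nc}(\bar P)/m^{O(1)}-m^{O(1)}$, which for $r_{nc}(\bar P)$ above an effective threshold $r(s,\bar d)$ exceeds the threshold needed in Theorem~\ref{bias-rank-1}(1) to get bias $<q^{-s-N}$. This closes the argument; the bookkeeping of the polynomial-in-$m$ losses (and checking they are polynomial for fixed $\bar d$, doubly exponential in $d$) is routine given Remark~\ref{conj-bias}.
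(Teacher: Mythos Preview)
Your character-sum framework is exactly the paper's: expand $n_{\bar R}$ over $\bar a\in k^N$, the $\bar a=0$ term gives the main count $q^{-N}|\mathrm{Aff}_m(\mV)|$, and one must show $|\mE_w e_q(S_{\bar a}(w))|$ is small for each nonzero $\bar a$, after which Theorem~\ref{bias-rank-1} and Remark~\ref{gcs} (equivalently, Proposition~\ref{size} applied to the family of coefficient polynomials) finish. The gap is in your rank lower bound for $S_{\bar a}$. You write ``show that for each multidegree $\alpha$ with $|\alpha|=d$ the polynomial $w\mapsto\operatorname{coeff}_\alpha(P\circ w)$ has large nc-rank'', but the top-degree part of $S_{\bar a}$ is a \emph{linear combination} $\sum_\alpha a_\alpha\,\operatorname{coeff}_\alpha(P\circ w)$ over many $\alpha$, with the $a_\alpha$ varying independently; high rank of each summand does not give high rank of the combination---that is precisely the statement that the family $\{\operatorname{coeff}_\alpha\}_\alpha$ has high rank, which is what must be proved. (The reduction to a ``single $P=\sum_ib_iP_i$'' has the same defect: the $a_{i,\alpha}$ need not factor as $b_i\cdot a_\alpha$.) Your ``setting arguments equal'' mechanism also has a characteristic problem: when $\alpha$ has repeated parts, passing from $\ti P$ to $\operatorname{coeff}_\alpha$ introduces a multinomial factor that vanishes if $\mathrm{char}(k)\le d$.

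The paper addresses exactly this point, with two arguments. In characteristic $>d$: given a nonzero combination $\sum_l c_l\,\operatorname{coeff}_{x_l}(P\circ w)$ over $l\in\mcI_m$, pick $l=(l_1,\dots,l_d)$ with $c_l\ne0$ and restrict to the diagonal $w_{l_1}=\cdots=w_{l_d}$; there the combination becomes $c_l\,d!\,P(w_{l_1})+(\text{lower degree in }w_{l_1})$, which inherits rank $\ge r(P)$. For arbitrary characteristic the $d!$ may vanish, and the paper instead bounds the bias directly (Claim~\ref{nc-version}): each $l$ determines a unique multiset of column-variables $\{w_{l_1},\dots,w_{l_d}\}$, so $d$ applications of Cauchy--Schwarz in those variables isolate a single $l$ and bound $|\mE e_q(S_{\bar a})|$ by $\|e_q(\sum_s b_sP_s)\|_{U_d}$ for a nonzero coefficient vector $(b_s)$, which is small by Theorem~\ref{bias-rank-1}. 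Your outline is closest to the first argument but lacks the diagonal-restriction step and its characteristic caveat; to cover all finite $k$ as the theorem requires, you need the Cauchy--Schwarz route.
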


\begin{proof}
We provide two proofs of the Theorem: one which is valid only in the case where char$(k)>d$, but is more algebraic in nature, and the other relies on multiple applications of Cauchy-Schwartz. \\ 

We start with the proof in the case where char$(k)>d$.
Since the proof for general $d$ involves many indices, we first prove the case when $c=1$ and $d=2$ so as to make the argument clear. 

We are given $P(t)=\sum_{1 \le  i \le j \le n}a_{ij}t_it_j +\sum_{1 \le  i \le n}a_{i}t_i +a$ of rank $r$.
Note that  for any linear form  $l(t)=\sum_{i=1}^n c_it_i$ we have that $P(t)+l(t)$ is of rank $\ge r$.

Denote  $w(x) = (w^1(x)+s^1, \ldots, w^n(x)+s^n)$, where the $w^i$ are linear forms. We can write
\[\begin{aligned}
P(w(x)) &= \sum_{1 \le i \le j \le n} a_{ij}(w^i(x)+s^i)(w^j(x)+s^j)+ \sum_{1 \le i \le n} a_{i}(w^i(x)+s^i)+a \\
&= \sum_{1 \le i \le j \le n} a_{ij}\sum_{k,l=1}^m w^i_{k}w^j_{l}x_kx_l+\sum_{1 \le i \le n} a_{i}\sum_{k=1}^m w^i_{k}x_k \\
&+  \sum_{1 \le i \le j \le n} \sum_{k=1}^m a_{ij}(s^iw^j_k+s^jw^i_k)x_k+ \sum_{1 \le i \le j \le n} a_{ij}s^is^j + \sum_{1 \le i \le n} a_{i}s^i +a,
\end{aligned}\]
which we can write as
\[\begin{aligned}
&\sum_{1 \le k < l \le m}   \sum_{1 \le i \le j \le n} a_{ij}(w^i_{k}w^j_{l} + w^i_{l}w^j_{k}) x_kx_l +
\sum_{1 \le  l \le m} \sum_{1 \le i \le j \le n} a_{ij}w^i_{l}w^j_{l}x^2_l \\
&  + \sum_{k=1}^m \left[\sum_{1 \le i \le n} a_{i}\ w^i_{k}
+ \sum_{1 \le i \le j \le n}  a_{ij}(s^iw^j_k+s^jw^i_k)\right]x_k + \sum_{1 \le i \le j \le n} a_{ij}s^is^j + \sum_{1 \le i \le n} a_{i}s^i +a.
\end{aligned}\]

Our aim is to show that the collection of coefficients for all monomials in the variables $x_j, \ 1 \le j \le m$, is of rank $\ge r$ (as polynomials in $w^i, s^i$):

\begin{claim}\label{ind} The collection below is of rank $\ge r-1$. 
\[\begin{aligned}
&\left\{ \sum_{1 \le i \le j \le n} a_{ij}(w^i_{k}w^j_{l} + w^i_{l}w^j_{k})\right\}_{1 \le k  < l \le m}
 \bigcup \   \left\{ \sum_{1 \le i \le j \le n}  a_{ij}(s^iw^j_k+s^jw^i_k)+ \sum_{1 \le i \le n} a_{i} w^i_{k}\right\}_{1 \le k \le m} \\
 & \bigcup   \left\{\sum_{1 \le i \le j \le n} a_{ij}s^is^j +\sum_{1 \le i \le n} a_{i} s^i \right\} \
 \bigcup \ 
\left\{ \sum_{1 \le i \le j \le n} a_{ij}w^i_{l}w^j_{l}\right\}_{1 \le l \le m}
\end{aligned}\]
\end{claim}

\begin{proof}
We need to show that any non-trivial linear combination
\[\begin{aligned}
&\sum_{1 \le k < l \le m}  b_{kl} \left[\sum_{1 \le i \le j \le n} a_{ij}(w^i_{k}w^j_{l} + w^i_{l}w^j_{k})\right]
+\sum_{1 \le l \le m}  b_{ll} \left[\sum_{1 \le i \le j \le n} a_{ij}w^i_{l}w^j_{l} \right]\\
&+ \sum_{1 \le k \le m} c_k \left[\sum_{1 \le i \le j \le n}  a_{ij}(s^iw^j_k+s^jw^i_k)+ \sum_{1 \le i \le n} a_{i} w^i_{k}\right]
+ d\left[\sum_{1 \le i \le j \le n} a_{ij}s^is^j +\sum_{1 \le i \le n} a_{i} s^i\right]
\end{aligned}\]
is of rank $\ge r$.  Suppose $b_{11} \ne 0$. Then we can write the above as
 \[
 b_{11}P(w_1) + l_{w_2, \ldots, w_m,s}(w_1)
\]
where $w_j=(w_j^1, \ldots, w_j^n)$, and $l_{w_2, \ldots, w_m,s}$ is affine in  $w_1$, so as a polynomial in $w_1$ this is of rank $\ge r$ and thus also of rank $\ge r$ as a polynomial in $w$.  Similarly in the case where $b_{ll} \ne 0$, for some $1 \le l \le m$.

Suppose $b_{12} \ne 0$.  We can write the above as
\[
(*) \quad  b_{12}Q(w_1, w_2) + l^1_{w_3, \ldots, w_m,s}(w_1) + l^2_{w_3, \ldots, w_m,s}(w_2)
\]
where $Q:V^2 \to k$, and $Q(t,t)=2P(t)$, and $l^i_{w_3, \ldots, w_m,s}:V\to k$ are affine maps. Thus restricted to the subspace
 $W$ where $w_1=w_2$ we get that $(*)$ is of rank $\ge r$ and thus of rank $ \ge r-1$ on $W$.  Similarly if $b_{kl} \ne 0$ for some
$k< l$. A similar analysis for the cases when $c_k$ or $d_k$ are not zero yields the desired result.  \\
\end{proof}

If $P$ is homogeneous  $P(t)=\sum_{1 \le  i \le j \le n}a_{ij}t_it_j$ of rank $r$, and $w: k^m \to k^n$ a linear map, we can write
\[\begin{aligned}
P(w(x)) &= \sum_{1 \le i \le j \le n} a_{ij}w^i(x)w^j(x) = \sum_{1 \le i \le j \le n} a_{ij}\sum_{k,l=1}^m w^i_{k}w^j_{l}x_kx_l
\end{aligned}\]
which we can write as
\[\begin{aligned}
&\sum_{1 \le k < l \le m}   \sum_{1 \le i \le j \le n} a_{ij}(w^i_{k}w^j_{l} + w^i_{l}w^j_{k}) x_kx_l +
\sum_{1 \le  l \le m} \sum_{1 \le i \le j \le n} a_{ij}w^i_{l}w^j_{l}x^2_l \\
\end{aligned}\]
By Claim \ref{ind} the collection of coefficients for all monomials in the variables $x_j, \ 1 \le j \le m,$ is also of  rank $\ge r-1$.  \\

For $d>2$ we perform a similar computation. To simplify the notation we carry it out in the case $P, R$ are homogeneous; the non-homogeneous case is similar. In this case it suffices to use linear maps. Denote $\mcI_n=\{I=(i_1, \ldots, i_d): 1 \le i_1 \le \ldots \le i_d \le n\}$, and for $t \in k^n$ denote by $t_I = \prod_{j=1}^dt_{i_j}$.
Let $P$ be a degree $d$ polynomial $P(t)=\sum_{I \in \mcI_n} a_{I} t_{I}$ on $k^n$  of rank $r$.\\

Let $w:k^m \to k^n$ be a linear map.  We can write
\[
P(w(x)) = \sum_{I \in \mcI_n} a_{I} \left(\prod_{j=1}^d w^{i_j}(x)\right)
=  \sum_{I \in \mcI_n}  a_{I}\sum_{l_1, \ldots, l_d=1}^m \left(\prod_{j=1}^d w^{i_j}_{l_j} \right) x_{l_1} \dots x_{l_d}.
\]
We rewrite this as
\[
\sum_{l_1, \ldots, l_d=1}^m \sum_{I \in \mcI_n}  a_{I} \left(\prod_{j=1}^d w^{i_j}_{l_j} \right) x_{l_1} \dots x_{l_d}
= \sum_{l \in \mcI_m } \left(\sum_{I \in \mcI_n}  a_{I} \sum_{\sigma \in S(l)} \left(\prod_{j=1}^d w^{i_j}_{\sigma_j} \right)\right)x_{l_1} \dots x_{l_d}\]
where $S(l)$ is the set of permutations of $l_1, \dots l_d$.  \\

Once again, our aim is to show that the collection of coefficients for all possible monomials in the variables $x_j, \ 1 \le j \le m$, is of rank $\ge r$:

\begin{claim} The collection below is of rank $\ge r-d+1$: 
\[
\left\{ \sum_{I \in \mcI_n}  a_{I} \sum_{\sigma \in S(l)} \prod_{j=1}^d w^{i_j}_{\sigma_j} \right\}_{l \in \mcI_m}
\]
\end{claim}

\begin{proof}
Consider a linear combination of the above collection with coefficient $c_l \ne 0$ for some $l \in \mcI_m$.  Consider $$Q(w_{l_1}, \ldots, w_{l_d})=  \sum_{I \in \mcI_n}  a_{I}  \sum_{\sigma \in S(l)} \prod_{j=1}^d w^{i_j}_{\sigma_j}.$$ 
Then  if $\Delta_wP(y) = P(y+w)-P(y)$ then $Q(w_{l_1}, \ldots, w_{l_d}) = \Delta_{w_{l_1}} \ldots  \Delta_{w_{l_d}}P(y)$.   Now we can write the given  linear combination as  $c_lQ(w_{l_1}, \ldots, w_{l_d})+T(w_{l_1}, \ldots, w_{l_d})$ where 
$T$ is a function of lower degree in $w_{l_1}, \ldots, w_{l_d}$ (it also depends on the $w_{t}$ for $t \ne l_i$, for $1 \le i \le d$). Since 
(char$(k), d)=1$, by Remark \ref{norm-bias-rank}(2) we have that the rank of $Q$ is the same as that of $P$,  and thus  the rank of the collection is the same as rank of $P$. 
\end{proof}

When  $c>1$,we are given $P_s(t)=\sum_{I \in \mcI_s}a^s_{I}t_I$, $1 \le s \le c$,  of rank $r$, where $\mcI_{d_s}(n)$ is the set of
ordered tuples $I=(i_1, \ldots, i_{d_s})$ with  $1 \le i_1 \le \ldots \le i_{d_s} \le n$, and $t_I= t_{i_1} \ldots t_{i_{d_s}}$.

Note that  for any polynomials $l_s(t)$ of degrees $<d_s$  we have that $\{P_s(t)+l_s(t)\}$ is also of rank $>r$.

We can write
\[
P_s(w(x)) = \sum_{I \in \mcI_{d_s}(n)} a^s_{I}w^I(x)
= \sum_{I \in \mcI_{d_s}(n)} a^s_{I}\sum_{l_1, \ldots, l_{d_s}=1}^m w^{i_1}_{l_1} \ldots w^{i_{d_s}}_{l_{d_s}}x_{l_1} \ldots x_{l_{d_s}},
\]
where $w^I = \prod_{i \in I} w^{i}$. 
For $( l_1, \ldots ,l_{d_s}) \in \mcI_{d_s}(m)$  the term  $x_{l_1} \ldots x_{l_{d_s}}$ has as coefficient
\[
\sum_{\sigma \in S_{d_s}} \sum_{I \in \mcI_s(n)} a^s_{I}w^{i_1}_{l_{\sigma(1)}} \ldots w^{i_{d_s}}_{l_{\sigma(d_s)}}.
\]
 We wish to show that  the collection
\[
\left \{\sum_{\sigma \in S_{d_s}} \sum_{I \in \mcI_{d_s}(n)} a^s_{I}w^{i_1}_{l_{\sigma(1)}} \ldots w^{i_{d_s}}_{l_{\sigma(d_s)}}
\right\}_{ 1 \le s \le c, ( l_1, \ldots ,l_{d_s}) \in \mcI_s(m)}
\]
is of rank $>r$.  Write $[1,c]=\bigcup_{f=2}^d C_f$ where $C_f=\{s: d_s=f\}$. 

We need to show that for any $f=2, \ldots, d$ if $B=(b^s_{ (l_1, \ldots, l_{d_s})})_{s \in C_f,  (l_1, \ldots, l_{d_s}) \in \mcI_{d_s}(m)}$ is not $\bar 0$, then
\[
\sum_{s\in C_f} \sum_{(l_1, \ldots, l_{f})} b^s_{ (l_1, \ldots, l_{f})}\sum_{\sigma \in S_{f}} \sum_{I \in \mcI_{f}(n)} a^s_{I}w^{i_1}_{l_{\sigma(1)}} \ldots w^{i_{f}}_{l_{\sigma(f)}}
\]
is of rank $>r$.  Suppose $(b^s_{ (l_1, \ldots, l_{f})})_{s\in C_f} \ne \bar 0$. Then restricted to the subspace
$w_{l_1} = \ldots = w_{l_{f}}$ we can write the above as
 \[
\sum_{s \in C_f} b^s_{ (l_1, \ldots, l_{f})}(f!) P_s(w_{l_1}) + R(w)
\]
where $w_j=(w_j^1, \ldots, w_j^n)$, and $R(w)$ is of lower degree in  $w_{l_1}$, so as a polynomial in $w_{l_1}$ this is of rank $>r$ and thus also of rank $>r$ as a polynomial in $w$. \\

Now (1), (2) follow from Proposition \ref{size}, since $| \mcI_m| = m^{C(\bar d)}$. \\
\ \\
We give an alternative proof that is valid for {\em all characteristic}. Following the computations above it suffices to show the following Claim: 

\begin{claim}\label{nc-version} For any $t>0$ there exists $r=r(t, \bar d)$ such that if the nc-rank of $\bar P$ is $>r$ then  for any polynomial $Q$ of degree $2\le f\le c$ that is  a non trivial combination of the polynomials in   the collection
\[
\left \{\sum_{\sigma \in S_f} \sum_{I \in \mcI_f(n)} a^s_{I}w^{i_1}_{l_{\sigma(1)}} \ldots w^{i_{f}}_{l_{\sigma(f)}}
\right\}_{  s \in C_f, ( l_1, \ldots ,l_{f}) \in \mcI_f(m)}
\]
we have that $| \mE e_q(Q)|<q^{-t}$.  
\end{claim}

\begin{proof}
Let $Q$ be of degree $f$, and write 
\[
Q=\sum_{(l_1, \ldots, l_{f})}\sum_{s\in C_f}   b^s_{ (l_1, \ldots, l_{f})}\sum_{\sigma \in S_f} \sum_{I \in \mcI_f(n)} a^s_{I}w^{i_1}_{l_{\sigma(1)}} \ldots w^{i_{f}}_{l_{\sigma(f)}}
\]
where $\bar b_{ (l_1, \ldots, l_{f})}= (b^s_{ (l_1, \ldots, l_{f})})_{s \in C_f} \ne 0$ for some $(l_1, \ldots, l_{f}) \in \mcI_f(m)$.
Observe that any $(l_1, \ldots, l_{f})$ determines a unique set of  variables $w_{l_1}, \ldots, w_{l_f}$, thus after $f$ applications of the Cauchy-Schwarz inequality
we can isolate this collection and arrive at the multilinear from associated with differentiating  
\[
\sum_{s\in C_f}   b^s_{ (l_1, \ldots, l_{f})}\sum_{\sigma \in S_f} \sum_{I \in \mcI_f(n)} a^s_{I}w^{i_1}_{l_{\sigma(1)}} \ldots w^{i_{f}}_{l_{\sigma(f)}}
\]
with respect to the associated set of variables. Namely we have 
\[
| e_q(Q)|_{U_1} \le \Big \| e_q\Big(\sum_{s\in C_f}   b^s_{ (l_1, \ldots, l_{f})}\sum_{\sigma \in S_f} \sum_{I \in \mcI_f(n)} a^s_{I}w^{i_1}_{l_{\sigma(1)}} \ldots w^{i_{f}}_{l_{\sigma(f)}}\Big) \Big \|_{U_f}
\]
But the latter is equal to 
\[
\mE_{w_{l_1}, \ldots, w_{ l_f}}e_q( \Delta_{w_{l_1}} \ldots  \Delta_{w_{l_d}} \bar b_{ (l_1, \ldots, l_{f})} \cdot \bar P^f(x)).
\]
where $\bar P^f = (P_s)_{s \in C_f}$. 
Now by Theorem \ref{bias-rank-1} we can choose $r$ such that if  $\bar P$ is of rank $>r$ then  the above is $<q^{-s}$. 
\end{proof}

This completes the proof of Theorem \ref{need}.
 \end{proof}


 \subsection{ Proof of  Theorem \ref{B}.}

In this subsection we prove  Theorem \ref{B}.

Let $V$ be a vector space and $l:V\to k$ be a non-constant linear function. For any subset $I$ of $k$ we denote $\mW _I=\{ v \in \mV |l(v)\in I\}$ so that  $\mW _b =\mW _{\{ b\} }$, for $b\in k$. For  $\mX \subset \mV$ we write $\mX _I=\mX \cap \mW_I$.

Theorem \ref{B} follows from the following proposition:

\begin{proposition}\label{line-plane} Fix $\bar d =\{ d_i\} $ and $m,s>0$.
Let $d:=\max _id_i $. There exists an effective bound $r=r(\bar d, s,m)$ such that for any finite field $k$,  any $k$-vector space $\mV$ and $ \bar P\in \mcP _{\bar d}(\mV)$ of nc-rank $>r$ the following holds. 
For any $b\in k$ and  $q^{-s}$-almost any affine $m$-dimensional subspace  $L \subset X_b$ there exists an $(m+1)$-dimensional affine subspace $M \subset X$ containing $L$ such that $M \cap X_0 \ne \emptyset$.
\end{proposition}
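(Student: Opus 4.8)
\emph{Reduction and second moment.} The plan is to reduce to a second moment estimate and then evaluate the relevant counts using the equidistribution results proved above. Write $\mcZ_b$ for the set of $m$-dimensional affine subspaces $L\subset X_b$; if $\mcZ_b=\emptyset$ there is nothing to prove. First I would observe that it suffices to find, for $q^{-s}$-almost every $L\in\mcZ_b$, an $(m+1)$-dimensional affine $M$ with $L\subset M\subset X$ and $M\not\subset W_b$: such an $M$ has $l$ non-constant, so $l(M)=k$, whence $M$ meets $W_0$ and $M\cap X_0\ne\emptyset$; conversely, for $b\ne 0$ any admissible $M$ must leave $W_b$, while for $b=0$ one has $M\cap X_0\supseteq L\ne\emptyset$ automatically. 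For $L\in\mcZ_b$ let $N(L)$ be the number of such $M$. By Cauchy--Schwarz
\[
\#\{L\in\mcZ_b:\ N(L)\ge 1\}\ \ge\ \frac{\big(\sum_{L\in\mcZ_b}N(L)\big)^{2}}{\sum_{L\in\mcZ_b}N(L)^{2}},
\]
so it is enough to prove that $N(L)$ is concentrated, namely $\big(\sum_L N(L)\big)^{2}\ge (1-q^{-s})\,|\mcZ_b|\,\sum_L N(L)^{2}$.

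\emph{Counting by equidistribution.} Next I would parametrize $m$-dimensional affine maps by $\vec v=(v_0,\dots,v_m)\in V^{m+1}$, $\phi_{\vec v}(x)=v_0+\sum_{j=1}^m v_jx_j$, and an extension $M=L+kw$ by the affine map $\psi_{\vec v,w}(x,t)=\phi_{\vec v}(x)+tw$. Then ``$\operatorname{im}\phi_{\vec v}\subset X_b$'', resp.\ ``$\operatorname{im}\psi_{\vec v,w}\subset X$ and $M\not\subset W_b$'', becomes: the linear conditions $l(v_0)=b$, $l(v_j)=0$ $(j\ge 1)$, resp.\ additionally $l(w)\ne 0$, together with the vanishing of all coefficients --- polynomials in $\vec v$, resp.\ in $(\vec v,w)$ --- of $\bar P\circ\phi_{\vec v}$ viewed as a polynomial in $x$, resp.\ of $\bar P\circ\psi_{\vec v,w}$ viewed as a polynomial in $(x,t)$. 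Consequently, up to the over-counting factors $q^m|\operatorname{GL}_m(k)|$ (for $L$) and $(q-1)q^m$ (for each extension direction), and up to a relative error $O(mq^{-\dim V})$ from degenerate tuples, the three quantities $|\mcZ_b|$, $\sum_L N(L)$, $\sum_L N(L)^{2}$ are normalized counts of tuples $\vec v$, $(\vec v,w)$, $(\vec v,w,w')$ subject to systems of this kind. The collections of coefficient-polynomials occurring here have rank at least $r_{nc}(\bar P)-O_{\bar d}(1)-O(m)$: for the $\vec v$ and $(\vec v,w)$ systems this is exactly Theorem~\ref{need} (with $m$, resp.\ $m+1$, parameters) and the rank bounds in its proof, and restricting to the bounded-codimension subspaces cut out by the $l$-conditions --- handling the open conditions $l(w)\ne0$ by inclusion--exclusion --- costs only $O(m)$ more in rank by Remark~\ref{norm-bias-rank}(2). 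Hence by Proposition~\ref{size} each of the three counts equals its expected main term up to a factor $1\pm q^{-t}$, with $t$ as large as desired once $r_{nc}(\bar P)\ge r(\bar d,s,m)$.

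\emph{Matching main terms.} I would then compute the main terms directly, expecting $|\mcZ_b|\approx\mu$, $\sum_L N(L)\approx\nu\mu$ and $\sum_L N(L)^{2}\approx\nu^{2}\mu$ for some $\mu>0$ and $\nu=q^{(\dim V-1)-m-(M_1-M_0)}$, where $M_1$ is the total number of coefficients of $\bar P\circ\psi_{\vec v,w}$ as a polynomial in $(x,t)$ and $M_0$ the number of those not involving $t$; informally, a typical $L\in\mcZ_b$ has exactly the ``expected'' number $\nu$ of admissible extensions. Since $r_{nc}(\bar P)\le\dim V$ and $r_{nc}(\bar P)$ is large, the exponent of $\nu$ is a large positive integer, so $\nu\ge q>1$, and the main terms satisfy $(\nu\mu)^{2}=\mu\cdot\nu^{2}\mu$ identically; feeding the $1\pm q^{-t}$ bounds into the Cauchy--Schwarz inequality above then gives $\#\{L\in\mcZ_b:N(L)\ge1\}\ge(1-q^{-s})|\mcZ_b|$, which is what we want.

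\emph{Main obstacle.} The one genuinely new point is the equidistribution of the triple system in $(\vec v,w,w')$ --- equivalently, the concentration of $N(L)$ about $\nu$ over $L\in\mcZ_b$. The hard part will be to show that a nontrivial combination of the relevant coefficient-polynomials has small character sum. I would exploit that, given $\vec v$, the two extension directions $w$ and $w'$ are independent, so the character sum factors as $\mE_{\vec v}\big[e_q(A(\vec v))\,\mE_w e_q(B(\vec v,w))\,\mE_{w'}e_q(C(\vec v,w'))\big]$, reducing matters to bounding $\mE_{\vec v}\,|\mE_w e_q(B(\vec v,w))|$ for a nontrivial combination $B$ of the ``extension'' coefficients; this should follow from one more round of Cauchy--Schwarz together with Theorem~\ref{bias-rank-1}, the subtlety being to set up the Cauchy--Schwarz (and the subsequent average over $\vec v$) so that the rank is not destroyed.
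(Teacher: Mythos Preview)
Your approach is correct in spirit and ultimately reduces to the same analytic core as the paper's proof, but it is packaged differently. The paper does \emph{not} pass through the second--moment inequality $\#\{L:N(L)\ge1\}\ge(\sum N(L))^2/\sum N(L)^2$. Instead it fixes $(x,\bar y)$ parametrizing $L$, writes the number of admissible extension directions $z$ directly as a character sum over $z$ and dual variables, and shows that for $q^{-s}$-almost every $(x,\bar y)$ each nontrivial error term is small. Concretely, the paper reduces the condition ``$P_j^e(x+\sum s_iy_i+sz)=0$ for all $s,s_i$'' to a finite system of point-evaluations via a Vandermonde-type Claim (your coefficient-matching plays the same role), and then bounds the variance of each error term, $B_{t_0}=\mE_{x,\bar y}\big|\mE_z e_q(T(x,y,z))\big|^2$, by a Gowers norm using a tailored iterated Cauchy--Schwarz (Lemma~\ref{complexity}). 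This yields the stronger pointwise conclusion that $A(x,\bar y)$ is close to its main term for almost every $(x,\bar y)$.

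Your second-moment route is legitimate but more indirect: you count $|\mcZ_b|$, $\sum_L N(L)$, $\sum_L N(L)^2$ separately and feed them into Cauchy--Schwarz. The first two are indeed handled by Theorem~\ref{need}/Proposition~\ref{size}, but your text slightly overreaches when it says ``by Proposition~\ref{size} each of the three counts equals its expected main term'': the rank hypothesis of Proposition~\ref{size} for the $(\vec v,w,w')$ system is not supplied by Theorem~\ref{need}, and you correctly flag this as the real issue. Your proposed fix---factor the character sum over the independent directions $w,w'$ and bound $\mE_{\vec v}\big|\mE_w e_q(B(\vec v,w))\big|^2$ by Cauchy--Schwarz and Theorem~\ref{bias-rank-1}---is exactly the paper's $B_{t_0}$ estimate in disguise, and the ``subtlety'' you anticipate (arranging the Cauchy--Schwarz so that rank is preserved) is precisely what Lemma~\ref{complexity} does. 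So both proofs converge on the same hard step; the paper's organisation simply avoids the detour through $\sum N(L)^2$ and gives a stronger pointwise statement along the way.
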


\begin{proof} 
We fix $d$ and define $d'= \min(d+1,q)$.
Let $M_0=\{ a_0,\dots ,a_{d}\}\subset  k$ be a subset of $d'$ distinct points.  To simplify notations we assume that $a_0=0$.

\begin{claim} Let $Q(x)$ be a polynomial of degree $\leq d$ such that
$Q_{|M_0}\equiv 0$. Then $Q(a)=0$ for all $a\in k$.
\end{claim}

\begin{proof}

Since any polynomial of degree $\leq d$ in one variable vanishing at $d+1$ point is equal to 0, the claim is true if $q\geq d+1$.
On the other hand if $d\geq q$ then there is nothing to prove.
\end{proof}

Let $J(d)$ be the subset  of $[0,d]^{m+1}$ of tuples $t=(t_1, \ldots, t_{m+1} )$ such that $0 \le t_{m+1} \le  \ldots \le t_1$. Let 
$T^{m+1}:=\{ a_{t} = (a_{t_1}, \ldots, a_{t_{m+1}}): t \in J(d)\}\subset k^{m+1}$.

\begin{claim}\label{reduce} Let $Q(x_1, \ldots, x_{m+1})$ be a polynomial of degree $\leq d$ such that
$Q_{|T ^{m+1}}\equiv 0$. Then $Q=0$.
\end{claim}

\begin{proof} Our proof is by induction in $m$ and for a fixed $m$ by induction in $d$. 

Consider first the case when 
$m=1$. We will write $x,y$ instead of $x_1,x_2$. We have $T^2=\{ (a_{t_1},a_{t_2}): 0\leq t_2\le t_1 \leq d\}$. 

We prove the claim by induction in $d$. Let
$Q=\sum _{a,b}q_{a,b} x^ay^b$, with $a+b\leq d$. The restriction of $Q$ to the line $\{ y=0\}$ is equal to
$Q^0(x)=\sum _{a\leq d}q_{a,0}x^a$. Since $Q^0_{|T^2}\equiv 0$ we see that $Q^0=0$. So $Q(x,y)=yQ'(x,y)$. So, by the inductive assumption, we have $Q'\equiv 0$.

Assume now the validity of Claim for polynomials in $m$ variables and for polynomials in $m+1$ variables degree $\leq d-1$.
Let  $Q(x_1, \ldots, x_{m+1})$ be a polynomial of degree $\leq d$ such that
$Q_{|T^{m+1}}\equiv 0$. Let $R$ be the restriction of $Q$ on 
the subspace of points $(x_1,\dots ,x_{m+1})$ such that $ x_{m+1} =0$. Since 
$R_{|T^m} \equiv 0 $ it follows from the inductive assumption that $R \equiv 0$ and therefore $Q=Q'x_{m+1}$ where $\operatorname{deg}(Q')=d-1$. Since
$Q'_{|T}\equiv 0$ we see from the inductive assumption that $Q'\equiv 0$.
\end{proof}

Denote $I(d)$ the set of indexes
\[
I(d)= \left\{ t:=(t_1, \ldots, t_{m+1}) \in J(d): 1 \le t_{m+1} \right\}.
\]
An affine $m$-dimensional subspace  in $X_b$ is parametrized as $\{x+\sum_{i=1}^m s_iy_i; \ s_i \in k\}$, such that for all $2 \le e \le d$, all  $P^e_j \in \bar P^e$ we have
\[
(*) \quad  P^e_j\left(x+\sum_{i=1}^m s_iy_i\right)=0,\  l(x)=b, \ l(y_i)=0.
\]
Let $Y$ be the set of $(x,\bar y)$ satisfying $(*)$.

We need to show that almost every $(x, \bar y) \in Y$ we can find $z$ such that for all $2 \le e \le d$, all  $P^e_j \in \bar P^e$ we have
\[
\forall s,s_1, \ldots, s_m \in k, \ \quad P^e_j\left(x+\sum_{i=1}^m s_iy_i+sz\right)=0, \  l(z)=-b,  
\]
or  alternatively 
\[
\forall s,s_1, \ldots, s_m \in k,  \quad P^e_j\left(x+\sum_{i=1}^m s_iy_i+sz\right)=0, \  l\left(x+\sum_{i=1}^m s_iy_i+sz\right)=(1-s)b. \]
By Claim \ref{reduce}, since $P^e_j$ is of degree $e$,  we can reduce this system to
\[
P^e_j\left(x+\sum_{i=1}^m a_{t_i}y_i+a_{m+1}z\right)=0, \    l\left(x+\sum_{i=1}^m a_{t_i}y_i+a_{t_{m+1}}z\right)=(1-a_{t_{m+1}})b, \quad t \in I(e).
\]
Denote $I = \sum_{e \in [d]} |I(e)|$.
Fix $(x,\bar y) \in Y$ and estimate the number of solutions $A(x ,\bar y)$ to the above system of equations, which is given by
\[\begin{aligned}
& q^{-2I} \sum_{z} \sum_{e \in [d], \bar c^e_{t^e}: t^e \in I(e), h_t : t \in I(d)}\\ &e_q\Big( \sum_{e \in [d]}\sum_{t^e} \bar c^e_{ t^e} \cdot \bar P^e \Big(x+\sum_{i=1}^m a_{t^e_i}y_i+a_{t^e_{m+1}}z\Big) 
+ \sum_t h_{t}\Big( l  \Big(x+\sum_{i=1}^m a_{t_i}y_i+a_{t_{m+1}}z \Big)+(a_{t_{m+1}}-1)b  \Big)\Big).
\end{aligned}\]
Suppose  $\bar c_{t^e}^e= 0$ for all $ t^e$, but $\bar h \ne 0$, and recall that $l(x)=b$, $l(y_i)=0$. We get
\[\begin{aligned}
&\sum_ze_q\left(\sum_{t} h_{t} \left(l\left(x+\sum_{i=1}^m a_{t_i}y_i+a_{t_{m+1}}z\right)+(a_{t_{m+1}}-1)b\right)\right) \\
&= \sum_z e_q\left(\sum_{t} h_{t} (a_{t_{m+1}}l(z)+a_{t_{m+1}}b)\right).
\end{aligned}\]
Now if $\sum_{t} h_{t} a_{t_{m+1}}l(z) \not \equiv 0$ then the sum is $0$. Otherwise also $\sum_{t} h_{t} a_{t_{m+1}}b=0$ so that the sum is $|V|$.
\ \\
Now suppose $\bar c_{t_0}^e \ne 0$ for some $t_0$, and let $e$ be the largest degree for which this holds. Let
\[
T(x,y,z)= \sum_{t^e} \bar c^e_{ t^e} \cdot \bar P^e \Big(x+\sum_{i=1}^m a_{t^e_i}y_i+a_{t^e_{m+1}}z\Big) +Q(x,y,z)
\]
where $Q$ is of degree $<e$. 
We estimate
\[
B_{t_0}=\mE_{x,\bar y \in V}\left|\mE_ze_q(T(x,y,z))\right|^2.
\] 

\begin{lemma}\label{complexity} For any functions $f_t : V \to \mC$, $\|f_t\|_{\infty} \le 1$,  $t  \in I(d)$, we have 
\[
\left|\mE_{x,\bar y,z,z'}\prod _{t \in I(d)} f_{t}\big(x+\sum_{i=1}^m a_{t_i}y_i+a_{t_{m+1}}z\big) \bar f_{t}\big(x+\sum_{i=1}^m a_{t_i}y_i+a_{t_{m+1}}z+ a_{t_{m+1}}z'\big)\right| \le \|f_{t_0}\|_{U_{d}}.
\]
\end{lemma}

\begin{proof}
To simplify the notation we prove this in the case $m=1$.  
Without loss of generality $a_1=1$ (make a change of variables $y \to a_1^{-1}y,z \to a_1^{-1}z )$.
We prove this by induction on $d$. When $d=1$, $I(d)=\{(1,1)\}$, and the claim in this case follow from the following inequality
\[
\big|\mE_{x,y,z,z'}f_1(x+y+z)f_2(x+y+z+z')\big| = \big|\mE_{z,z'}f_1(z)f_2(z')\big| \le  \big|\mE_{z}f_i(z)\big| \le  \|f_{t_0}\|_{U_1}.
\]\
Assume $d>1$.   We can write the average as
\[\begin{aligned}
&\mE_{x,y,z,z'} \prod_{(i,j) \in I(d-1)} f_{i,j}(x+a_iy+a_jz) \bar f_{i,j}(x+a_iy+a_jz+a_jz')\\
& \qquad  \qquad \prod_{1\le j \le d} f_{d,j}(x+a_dy+a_jz) \bar f_{d,j}(x+a_dy+a_jz+a_jz'). \\
\end{aligned}\]
Shifting $x$ by $a_dy$ we get
\[\begin{aligned}
&\mE_{x,y,z,z'} \prod_{(i,j) \in I(d-1)} f_{i,j}(x+(a_i-a_d)y+a_jz) \bar f_{i,j}(x+(a_i-a_d)y+a_jz+a_jz')\\
& \qquad  \qquad \prod_{1\le j \le d} f_{d,j}(x+a_jz) \bar f_{d,j}(x+a_jz+a_jz').  \\
\end{aligned}\]
Applying the Cauchy-Schwarz inequality we can bound the above as
\[\begin{aligned}
&\big[\mE_{x,y,y',z,z'} \prod_{(i,j) \in I(d-1)} f_{i,j}(x+(a_i-a_d)y+a_jz) \bar f_{i,j}(x+(a_i-a_d)y+a_jz+a_jz')\\
& \qquad \qquad \prod_{(i,j) \in I(d-1)}\bar f_{i,j}(x+(a_i-a_d)y+(a_i-a_d)y'+a_jz)  \\
& \qquad \qquad \qquad \qquad f_{i,j}(x+(a_i-a_d)y+(a_i-a_d)y'+a_jz+a_jz')\big]^{1/2}.\\
\end{aligned}\]
Shifting $x$ by $a_dy$ and rearranging we get
\[\begin{aligned}
&\big[\mE_{x,y,y',z,z'} \prod_{(i,j) \in I(d-1)} f_{i,j}(x+a_iy+a_jz) \bar  f_{i,j}(x+a_iy+(a_i-a_d)y'+a_jz)\\
&\prod_{(i,j) \in I(d-1)}\bar f_{i,j}(x+a_iy+a_jz+a_jz')  f_{i,j}(x+a_iy+(a_i-a_d)y'+a_jz+a_jz')\big]^{1/2}.\\
\end{aligned}\]

Now if we denote
\[
g_{i,j, y'}(x) =f_{i,j}(x) \bar f_{i,j}(x+(a_i-a_d)y'),
\]
then by the induction hypothesis we get that the above is bounded by
\[
\big[\mE_{y'}\|g_{i,j, y'}\|_{U_{d-1}}\big]^{1/2} \le \|f_{i,j}(x)\|_{U_d}
\]
for any $(i,j) \in I(d-1)$. \\

We do a similar computation for $(i.j) \in I(d)\setminus \{I(d-1), (d,1)\}$ , splitting
\[\begin{aligned}
&\mE_{x,y,z,z'}\prod_{(i,j) \in I(d-1)} f_{i+1,j+1}(x+a_{i+1}y+a_{j+1}z) \bar f_{i+1,j+1}(x+a_{i+1}y+a_{j+1}z+a_{j+1}z')\\
& \qquad  \qquad \prod_{1\le j \le d} f_{j,1}(x+a_jy+z) \bar f_{j,1}(x+a_jy+z+z'), \\
\end{aligned}\]
and shifting  $x$ by $z$ to get
\[\begin{aligned}
&\mE_{x,y,z,z'}\prod_{(i,j) \in I(d-1)} f_{i+1,j+1}(x-z+a_{i+1}y+a_{j+1}z) \bar f_{i+1,j+1}(x-z+a_{i+1}y+a_{j+1}z+a_{j+1}z')\\
& \qquad  \qquad \prod_{1\le j \le d} f_{j,1}(x+a_jy) \bar f_{j,1}(x+a_jy+z').  \\
\end{aligned}\]

The only term left uncovered is $f_{d,1}$,  so we split
\[\begin{aligned}
&\mE_{x,y,z,z'} \prod_{(i,j) \in I(d-1)} f_{i+1,j}(x+a_{i+1}y+a_{j}z) \bar f_{i+1,j}(x+a_{i+1}y+a_{j}z+a_{j}z')\\
& \qquad  \qquad \prod_{1\le i \le d} f_{i,i}(x+a_{i}y+a_iz) \bar f_{i, i}(x+a_{i}y+a_iz+a_iz'). \\
\end{aligned}\]
 We make the change of variable $z \to  z-y$ to get
 \[\begin{aligned}
&\mE_{x,y,z,z'} \prod_{(i,j) \in I(d-1)} f_{i+1,j}(x+a_{i+1}y+a_{j}(z-y)) \bar f_{i+1,j}(x+a_{i+1}y+a_{j}(z-y)+a_{j}z')\\
& \qquad  \qquad \prod_{1\le i \le d} f_{i,i}(x+a_iz) \bar f_{i, i}(x+a_iz+a_iz').  \\
\end{aligned}\]
Observe that the argument of $f_{d,1}$ is  $(x+(a_{d}-a_1)y+a_{1}z)$, so that the coefficient of $y$ is not zero.  Now proceed as in previous cases. 
 \end{proof}

By the Lemma \ref{complexity} we obtain that $B_{t_0}$ is bounded by $\|e_q( \bar c^e_{ t_0^e} \cdot \bar P^e)\|_{U_e}$.
By Theorem \ref{bias-rank-1}  there exists an effective bound  $r=r(s, d)$, such that if  $P$ is of rank $>r$ then  $\|e_q( \bar c^e_{ t_0^e} \cdot \bar P^e)\|_{U_e}<q^{-s}$.  It follows that we can choose $r$ so that for $q^{-s}$-almost all $(x,y) \in Y$,  the number of solutions $A(x, \bar y)$ is bounded below by $|V|q^{-4I}$.
\end{proof}

\subsection{Proof of Theorems \ref{AC} and \ref{BC} }
We fix $m,d$ and $v$.
 As follows from Theorem \ref{A1}, there exists an effective bound $r=r(m,d,c)$ such that for any finite field $\mF _q$, any $\mF _q$-vector space $\mV$ the map $\ti \kk _P(\mF _q)$ is surjective for any family $\bar P=(P_i)$ of polynomials $P_i\in \mcP _d(\mV)$ such that  $r_{nc}(\bar P)\geq r$.

We now consider the case when $k$ is an algebraically closed field.

\subsubsection{The surjectivity of $\ti \kk _P(k)$}

We fix  $\mV =\mA ^n$ and consider $\mcP _d(\mV)^c$ as a scheme defined over $\mZ$. Let $T$ be the set of sequences $(a_i,b_i),1\leq i\leq r$, such that $0\leq a_i,b_i <d$ and $a_i+b_i \leq d$. For any $t=\{(a_i,b_i)\} \in T$ we denote by $\nu _t:\oplus _{i=1}^r \mcP _{a_i}(\mV) \otimes \mcP _{b_i}(\mV) \to  \mcP _d(\mV) $ the linear map given by 
$$\nu _i(\{ Q_i\otimes R_i\})=\sum _{i=1}^r Q_i R_i.$$ 
Let $\mZ$ be the union of images of the maps of $\nu _t,t\in T$. 

Let $\mY \subset \mcP _d(\mV)^c$ be constructible subset of families
 $\bar P=\{ P_i\}$ such that $\mZ \cap L_{\bar P}=\{0\}$ where
 $L_{\bar P} \subset \mcP _d(\mV) $ is the span of $( P_i)$.

So $\mY (k)\subset \mcP _d(\mV)(k)$ consists families $\bar P$ of polynomials of rank $>r$. We define $\mR \subset \mY$ as the constructible subset of $\bar P\in \mY$ such that the morphism $\ti \kk _{\bar P}$ is not surjective. Our goal is to show that $\mR =\emp$. 

We first consider the case when $k= \bar \mF _p $ is the algebraic closure of $\mF _p$.

\begin{claim}\label{p} $\mR (\bar \mF _p)=\emp$.
\end{claim}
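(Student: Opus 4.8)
The plan is to reduce the statement over $\bar{\mathbb F}_p$ to the finite-field case already handled by Theorem \ref{A1}. The key point is that all the schemes involved --- $\mathcal V = \mathbb A^n$, the variety $\operatorname{Aff}_m(\mathcal V)$, the space $\mathcal P_d(\mathcal V)^c$, the subscheme $\mathcal Z$, the constructible sets $\mathcal Y$ and $\mathcal R$, and the morphism $\tilde\kappa_{\bar P}$ --- are all defined over $\mathbb Z$, hence over $\mathbb F_p$. So I would argue by contradiction: suppose $\mathcal R(\bar{\mathbb F}_p)\neq\emptyset$, and pick a point $\bar P_0\in\mathcal R(\bar{\mathbb F}_p)$. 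Since $\bar{\mathbb F}_p=\bigcup_{e\geq 1}\mathbb F_{p^e}$, this point is already defined over some finite field $\mathbb F_q$ with $q=p^e$; that is, $\bar P_0\in\mathcal R(\mathbb F_q)$ where we may take $q$ as large as we please by replacing $e$ with a multiple.

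Now I would unwind what membership in $\mathcal R(\mathbb F_q)$ means. First, $\bar P_0\in\mathcal Y(\mathbb F_q)$ forces $\mathcal Z\cap L_{\bar P_0}=\{0\}$, which says precisely that no nonzero element of the linear span of the $P_i$ can be written as $\sum_{i=1}^r Q_iR_i$ with $\deg Q_i,\deg R_i<d$; i.e. $\bar P_0$ has rank $>r$ over $\mathbb F_q$. Here one must be a little careful: the condition $\mathcal Z\cap L_{\bar P}=\{0\}$ is a condition about $\bar{\mathbb F}_p$-points, but since $\mathcal Z$ and $L_{\bar P_0}$ are defined over $\mathbb F_q$ and rank is insensitive to field extension in the relevant range (a rank-$\le r$ decomposition over $\bar{\mathbb F}_p$ can be spread out and specialized to give one over a finite extension, and conversely), having rank $>r$ over $\bar{\mathbb F}_p$ is equivalent to having rank $>r$ over $\mathbb F_{q}$ after possibly enlarging $q$. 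Second, $\bar P_0\in\mathcal R(\mathbb F_q)$ says the morphism $\tilde\kappa_{\bar P_0}:\operatorname{Aff}_m(\mathbb A^n)\to\mathcal P_d(\mathbb A^m)$ is not surjective as a morphism of $\bar{\mathbb F}_p$-schemes; since this is a morphism of finite type $\mathbb F_q$-schemes, non-surjectivity on $\bar{\mathbb F}_p$-points is equivalent to non-surjectivity on $\mathbb F_{q'}$-points for all sufficiently large finite extensions $\mathbb F_{q'}/\mathbb F_q$ (the image is constructible, so if it misses a $\bar{\mathbb F}_p$-point it misses a dense open subset of some component of the complement, and Lang--Weil / point-counting over growing finite fields detects this). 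Concretely: if the constructible image avoids some polynomial $Q\in\mathcal P_d(\mathbb A^m)(\bar{\mathbb F}_p)$, then $Q$ is defined over some $\mathbb F_{q'}$, and $Q\notin\kappa_{\bar P_0}(\operatorname{Aff}_m(\mathbb F_{q'}))$.

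But now I have a contradiction with Theorem \ref{A1}: the rank bound $r=r(m,d,c)$ was chosen (via the effective bound in Theorem \ref{A1}, applied with some fixed $t\geq 1$) precisely so that for \emph{any} finite field, any family of nc-rank $\geq r$ over that field has $\kappa_{\bar P}$ surjective --- indeed $q^{-t}$-uniform, hence surjective once $q^{-t}<1$. One subtlety: Theorem \ref{A1} is stated in terms of nc-rank, whereas $\mathcal Y$ is defined using ordinary rank. Since $r(P)\le r_{nc}(P)$ when $\operatorname{char}(k)>d$ (Remark \ref{pd}(1)) but \emph{not} in general, I should instead set up $\mathcal Z$ and $\mathcal Y$ so that $\mathcal Y$ parametrizes families of high nc-rank: replace the $\nu_t$ construction with the analogous one applied to the multilinear forms $\tilde P_i$, so that $\mathcal Z\cap L_{\bar P}=\{0\}$ encodes $r_{nc}(\bar P)>r$. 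With that adjustment, $\bar P_0\in\mathcal Y(\mathbb F_{q'})$ gives $r_{nc}(\bar P_0)>r\geq r(m,d,c)$ over $\mathbb F_{q'}$, so by Theorem \ref{A1} the map $\kappa_{\bar P_0}$ over $\mathbb F_{q'}$ is surjective, contradicting $Q\notin\kappa_{\bar P_0}(\operatorname{Aff}_m(\mathbb F_{q'}))$. Hence $\mathcal R(\bar{\mathbb F}_p)=\emptyset$.

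The main obstacle is the bookkeeping around \emph{constructibility and finite-field specialization}: one must make precise that (i) non-surjectivity of $\tilde\kappa_{\bar P_0}$ on $\bar{\mathbb F}_p$-points implies non-surjectivity on $\mathbb F_{q'}$-points for infinitely many $q'$ --- this uses that the image is a constructible set defined over a finite field, together with the fact that a nonempty constructible set over $\bar{\mathbb F}_p$ contains $\mathbb F_{q'}$-points for all large $q'$ (by spreading out to a scheme of finite type over $\mathbb Z$ and counting points, or simply because $\bar{\mathbb F}_p$ is the union of the $\mathbb F_{q'}$); and (ii) that the rank condition cutting out $\mathcal Y$ is genuinely field-independent in the range we need. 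Both are standard but need to be spelled out carefully so that the quantitative bound $r(m,d,c)$ from Theorem \ref{A1} is what governs $\mathcal R=\emptyset$. Everything else is a routine translation between scheme-theoretic surjectivity and surjectivity on $\bar{\mathbb F}_p$-points.
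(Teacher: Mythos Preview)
Your approach is essentially the paper's: argue by contradiction, descend $\bar P$ and a missed target $Q$ to some finite field $\mathbb F_q$, then invoke Theorem \ref{A1} to produce a preimage and contradict. The constructibility/Lang--Weil detour in your ``main obstacle'' paragraph is unnecessary, since any $\mathbb F_q$-point $\phi$ with $\tilde\kappa_{\bar P}(\phi)=Q$ is already a $\bar{\mathbb F}_p$-point, so surjectivity over $\mathbb F_q$ immediately contradicts $Q\notin\operatorname{im}\tilde\kappa_{\bar P}(\bar{\mathbb F}_p)$; your observation about rank versus nc-rank in the definition of $\mathcal Y$ is a fair point about the paper's setup.
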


\begin{proof} Assume that
 $\mR (\bar \mF _p)\neq \emp$.  Then there exists $\bar P\in \mY (\bar \mF _p)$ such that the map $\ti \kk _{\bar P}(\bar \mF _p)$ is not surjective. Then there exists 
 $Q\in \mcP _{\bar d}(\bar \mF _p)$ which is not in the image of $\ti \kk _ {\bar P}(\bar \mF _p).$ 
By definition there exists $l\geq 1$ such that  $\bar P\in \mY (\bar \mF _q)$ and 
 $Q\in \mcP _{\bar d}( \mF _q),q=p^l$. 
But as follows from  Theorem \ref{A1} there exists  $\phi \in \text{Aff}_m(\mF _q)$ such that $Q=\ti \kk _ {\bar P}(\phi)$. But the existence of such affine map $\phi$ contradicts the assumption that $Q$ is not in the image of $\ti \kk _ {\bar P}(\bar \mF _p).$
\end{proof}

\begin{corollary}\label{kappa}\leavevmode
\begin{enumerate}
\item The map $\ti \kk _P(k)$ is surjective for  any algebraically closed field $k$ and a polynomial $P\in \mcP _d(\mV)$ of nc-rank $>r(m,d)$.
\item The map $\ti \kk _P(k)$ is surjective for 
any algebraically closed field $k$ of characteristic $0$ and a polynomial $P\in \mcP _d(\mV)$ of nc-rank $>r(m,d)$.
\end{enumerate}
\end{corollary}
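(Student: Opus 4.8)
The plan is to deduce Corollary~\ref{kappa} from Claim~\ref{p} by a spreading-out argument over $\operatorname{Spec}\mZ$. By construction $\mR$ is a constructible subset of the $\mZ$-scheme $\mcP_d(\mV)^c$: its constructibility follows from Chevalley's theorem applied to the incidence scheme $\mY\times\text{Aff}_m(\mV)$ mapping to $\mY\times\mcP_{\bar d}(\mA^m)$ by $(\bar P,\phi)\mapsto(\bar P,\bar P\circ\phi)$, after which $\mR$ is the image in the $\mY$-factor of the complement of that image. Claim~\ref{p} says $\mR(\bar\mF_p)=\emp$ for every prime $p$, and the goal is to upgrade this to $\mR=\emp$ as a scheme.

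For that I would argue as follows. The structure morphism $\mR\to\operatorname{Spec}\mZ$ has constructible image; if $\mR\neq\emp$ this image is nonempty, hence contains a closed point $(p)$, so the fibre $\mR_{(p)}$ is a nonempty scheme of finite type over $\mF_p$. Such a scheme has a closed point whose residue field is a finite extension of $\mF_p$, and this field embeds into $\bar\mF_p$, producing a point of $\mR(\bar\mF_p)$ --- contradicting Claim~\ref{p}. (Equivalently: $\mcP_d(\mV)^c$ is a Jacobson scheme over $\mZ$, so any nonempty constructible subset meets the closed points of $\mcP_d(\mV)^c$, all of which have finite residue fields.) Hence $\mR=\emp$, so $\mR(k)=\emp$ for every field $k$; unwinding the definition of $\mR$, this says that for any algebraically closed $k$ and any family $\bar P$ over $k$ of nc-rank $>r$ the morphism $\ti\kk_{\bar P}(k)$ is surjective. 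Specialising to $c=1$ gives part~(1), and part~(2) is the special case $\operatorname{char}(k)=0$.

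I do not expect a serious obstacle: Claim~\ref{p} already carries all the analytic content over finite fields, and what remains is the routine principle that a constructible subset of a scheme of finite type over $\mZ$ which is empty over every $\bar\mF_p$ is itself empty. The same conclusion can be packaged via the Lefschetz principle: for fixed $m,d,c$ and fixed $\dim\mV$ the assertion ``every $\bar P$ of rank $>r$ has surjective $\ti\kk_{\bar P}$'' is a first-order sentence in the language of rings (both the rank hypothesis and the surjectivity of $\ti\kk_{\bar P}$ are bounded quantifications over the coefficients of polynomials of bounded degree), and Claim~\ref{p} verifies it in $\bar\mF_p$ for all $p$. The only minor points to watch are that ``$\ti\kk_{\bar P}$ is not surjective'' is genuinely a constructible (resp.\ definable) condition on $\bar P$, and that the locus one works with is the high nc-rank locus rather than the high classical-rank locus, so that the hypothesis of Theorem~\ref{A1} really applies to every $\bar P\in\mY$ over each $\mF_q$; the latter matters only in small characteristic, where high rank need not imply high nc-rank, cf.\ Remark~\ref{pd}.
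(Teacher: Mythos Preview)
Your argument is correct. The paper's own proof uses the model-theoretic packaging rather than your algebro-geometric one: for part~(1) it simply invokes the completeness of the theory $ACF_p$ (the surjectivity statement, with $n=\dim\mV$ fixed, is a first-order sentence true in $\bar\mF_p$ by Claim~\ref{p}, hence true in every algebraically closed field of characteristic $p$); for part~(2) it takes a nontrivial ultrafilter on the primes, forms the ultraproduct of the $\bar\mF_p$, and applies \L o\'s's theorem to the same sentence, then uses completeness of $ACF_0$. Your spreading-out argument over $\operatorname{Spec}\mZ$ is the scheme-theoretic counterpart of exactly this transfer and handles all characteristics at once, without separating $p>0$ from $p=0$; you even note the model-theoretic alternative yourself, so the two proofs are really the same idea in different dialects. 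What your version buys is uniformity and a single clean statement $\mR=\emp$; what the paper's version buys is that one never needs to verify constructibility of $\mR$ over $\mZ$ (only definability of the sentence suffices). Your closing remark about working with the high nc-rank locus rather than the high rank locus is well taken: the paper's description of $\mY$ is phrased in terms of ordinary rank, but Theorem~\ref{A1} and the statement of the corollary require nc-rank, so your reading is the correct one.
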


\begin{proof} The part $(1)$ follows from the completeness of the theory $ACF_p$ of algebraically closed fields of a fixed characteristic $p$.  

To prove the part $(2)$ one choses a non-trivial ultrafilter $\mcU$ on the set of primes and 
considers the $\mcU$-ultraproduct of theories $ACF_p$. Let $l$ be the $\mcU$-ultraproduct of fields $\bar \mF _p$. 

Let ACF be the theory of algebraically closed fields and 
$\alpha$ be the formula in ACF expressing the surjectivity of the map $\kk _P$.
As follows from Claim \ref{p}  $\alpha$ holds for algebraic closures of fields $\mF _p$. We fix now $m,d,c$ and for any $n\geq 1$ define by $\alpha _n$ the following formula in $ACF$.

For any family $\bar P=(P_i), P_i\in k[x_1.\dots ,x_n]$ such that $r_{nc}(\bar P)\geq r(m,d,c)$ the map $\kk _{\bar P}:\text{Aff} _m(\mA ^n)\to (\mcP _d(\mA ^m))^c$ is surjective.

By the Theorem of {\L}o\'s applied to the formula $\alpha _n$
we see that the  map $\kk _P(l)$ is surjective for any family $\bar P\in \mcP _d(\mV)$, $\dim(\mV)=n$ of nc-rank $>r$. Since the theory $ACF_p$ of algebraically closed fields of  characteristic $0$ is complete and $n\geq 1$ is arbitrary the corollary is proved.  
\end{proof}

\subsubsection{The computation of the dimensions of fibers of $\ti \kk (k)$}

Let $\Hom_{\af}(\mW ,\mV)$ be the variety of affine maps from $\mW$ to $\mV$ and 
let $\mT \subset \mY$ be be the subscheme of polynomials $P$ such that there exists $Q\in \mcP _d(\mA ^m)$ such that 
 $\dim (\kk _P^{-1}(Q))\neq \dim (\Hom_{\af}(\mW ,\mV))-\dim(\mcP _d(\mW))$.

We want to prove that $\mT =\emp$. 
The same arguments as before show that 
it is sufficient to prove that  
$$\dim (\kk _P^{-1}(Q))= \dim (\Hom_{\af}(\mW ,\mV))-\dim(\mcP _d(\mW))$$ for all finite fields $k=\mF _q$ and $Q\in \mcP _d(\mA ^m)(k)$.
Let $w:= \dim (\Hom_{\af}(\mW ,\mV))-\dim(\mcP _d(\mW))$. As 
follows from \cite{LW} that there exists constant $A(n,d),l>0$ such that 
$||\kk _P^{-1}(Q)(\mF _{q^m})|-q^w|\leq A(n,d) q^{w-1/2}$ for any $q=p^{lm}$.

This implies that 

$$\dim (\kk _P^{-1}(Q))=\lim _{m\to \infty}
\frac {\log _q (| \kk _P^{-1}(Q)(k_m) |)}{lm},$$ 
where $k_m=\mF _{q^{ml}}$ is the extension of degree $l$. Now the equality $$\dim (\kk _P^{-1}(Q))=w$$ follows from 
Theorem \ref{A1}. 

Part (3) of Theorem \ref{AC} follows now from Theorem 23.1 in \cite{M}, and part (4) follows  from the part (3).

The derivation of Theorem \ref{BC} from Theorem \ref{B} is completely analogous.

\subsection{Proof of Theorem \ref{Jan} } 
{\em Proof of Theorem \ref{Jan}.} Let $\mcG$ be a subfunctor of $\mcF _d$ such that $r(P), P\in \mcG (W) $ is not bounded above. We want to show that $\mcG (W)=\mcF _d(W)$ for any finite-dimensional $k$-vector space $W$.

Let $m=\dim(W)$ and choose a polynomial $P\in \mcG (V)$, where $V$ is a $k$-vector space $V$ such that $r_{nc}(P)\geq r(m,d)$, where  $r(m,d)$ is as in the Corollary \ref{kappa}. Then for any polynomial $Q$ on $W$ of degree $d$, there exist an affine map $\phi :W\to V$ such that $Q=\phi^\star (P)$. We see that $\mcG (W)=\mcF _d(W)$.
\qed

\section{Extending weakly polynomial functions from high rank varieties}

\subsection{Introduction} 
We fix $d,a,c\geq 1$  and a field $k$   such
that $|k|>ad$ and that there exists a root of unity $\beta \in k$ of order $m>ad$. A field is  {\em admissible}
if it satisfies these conditions. 

\begin{definition}\label{weak-def-1}
Let  $V$ be a $k$-vector space,  and  let $X\subset V$. We say that a function $f:X \to k$ is {\it weakly polynomial} of degree $\leq a$ if  restrictions $f_{|L}$ to  affine subspaces  $L \subset X$ are polynomials of degree $\leq a$.  
\end{definition}

\begin{remark} If $|k|>a$ it suffices to check this on $2$-dimensional subspaces (see \cite{KR}, Theorem 1).
Namely a function is {\it weakly polynomial} of degree $\leq a$ if the restriction $f_{|L}$ to $2$-dimensional affine subspace  $L \subset X$ is a polynomial of degree $\leq a$.
\end{remark}

The goal of this section is to show in the case when $k$ is admissible field which is either finite or algebraically closed then any weakly polynomial function $f$ on a subvariety  $X \subset V$ of a sufficiently high rank extends to a polynomial $F$ of degree $\leq a$ on $V$. The main difficulty is in the case when  $a\geq d$ when an extension $F$ of $f$ is not unique. 

To state our result properly we introduce the following definition:

\begin{definition} An algebraic $k$-subvariety  $\mX \subset \mV$ satisfies the condition $\star ^k_{a}$
if any weakly polynomial function of degree $\leq a$ on $X$ is a restriction  of a polynomial function of degree $\leq   a$ on $V$.
\end{definition}

The following example demonstrates the  existence of cubic 
surfaces $\mX \subset \mA^2$  which do not have the property 
$\star^k_{1}$ for any field $k \neq \mF _2 $.

\begin{example} Let  $V=k^2$, $Q=xy(x-y)$. Then $X=X_0\cup X_1\cup X_2$ where 
$X_0=\{ v\in V|x=0\}, X_1=\{ v\in V|y=0\} , X_2=\{ v\in V|x=y\} $.
The function $f:X\to k$ such that $f(x,0)=f(0,y)=0,f(x,x)=x$ is weakly linear but one can not extend it to a linear function on $V$.
\end{example}

The main result of this section is that high rank hypersurfaces over admissible fields satisfy   $\star^k_a$. 

\begin{theorem}\label{main} 
There exists an $r=r(a,d)$ such that for any admissible field $k$ 
which is either finite or algebraically closed, 
any hypersurface $\mX$ of degree $d$ and nc-rank $\geq r$ in a $k$-vector space satisfies  $\star^k_a$.
\end{theorem}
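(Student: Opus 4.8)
The plan is to prove Theorem~\ref{main} by induction on the degree $a$ of the weakly polynomial function, reducing the problem to the analytic equidistribution results of Section~3. First I would fix an admissible field $k$ and a hypersurface $\mX = \mX_P \subset V$ with $P$ of degree $d$ and $r_{nc}(P) \geq r$ for a suitable $r=r(a,d)$ to be chosen. The base case $a < d$ is genuinely easy: if $f$ is weakly polynomial of degree $< d$ on $X$, one uses the high rank of $P$ to find, through any point $x \in X$, enough affine lines (in fact $m$-dimensional affine subspaces for $m$ as large as needed) lying inside $X$ — this is exactly the content of Theorem~\ref{Acc}(1)/Theorem~\ref{AC}(1), which guarantees that high rank hypersurfaces contain lots of affine subspaces of any bounded dimension. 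On such subspaces $f$ agrees with a polynomial of degree $\leq a < d$, and since $X$ (being high rank) is not contained in any proper affine subspace and the affine subspaces through a point span $V$, the local polynomial data patch together to a global polynomial $F$ of degree $\leq a$. Uniqueness of $F$ in this range follows because two polynomials of degree $<d$ agreeing on $X_P$ differ by a multiple of $P$, hence by degree count are equal.

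The inductive step handles $a \geq d$, where the key subtlety flagged in Remark after Theorem~\ref{ext} appears: the extension $F$ is no longer unique, since one may add $P \cdot G$ for $G$ of degree $\leq a-d$. The strategy I would use is to first produce, for each point $x_0 \in X$, a \emph{candidate} polynomial value of the would-be extension along subspaces through $x_0$, using the induction hypothesis applied to suitable auxiliary hypersurfaces of lower degree obtained by slicing, and then to show these local candidates are consistent. Concretely: write $F$ as an unknown polynomial of degree $\leq a$; its restriction to any affine $m$-plane $L \subset X$ must equal $f_{|L}$, and as $L$ ranges over all such planes this is an overdetermined linear system for the coefficients of $F$. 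The existence of a solution is where the equidistribution input enters — by Theorem~\ref{need} (Theorem~\ref{A1}), for $P$ of high enough nc-rank the map $\kappa_P$ sending affine maps $\phi: k^m \to V$ to $P\circ\phi$ is $q^{-t}$-uniform (over finite fields) and surjective/flat (over algebraically closed fields), so the space of $m$-planes in $X$ through a given point is ``as large as expected'' and in particular rich enough that a weakly polynomial function on $X$ cannot impose contradictory constraints. I would set up the extension problem as: choose a generic affine subspace $M$ of large dimension $m \gg a$, restrict $f$ there to get a genuine polynomial $g_M$ of degree $\leq a$ on $M$, and show that $g_M$ is the restriction of a polynomial on $V$ independent of $M$ — using that any two large generic $M, M'$ are joined by a chain of subspaces inside $X$ on which the polynomials must agree.

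The main obstacle will be exactly the non-uniqueness in the case $a \geq d$: I expect the heart of the argument to be showing that the locally defined polynomials glue, i.e. that the cocycle measuring the discrepancy between local extensions on overlapping subspaces is a coboundary. I would attack this by a descent/averaging argument: given two extensions $F_1, F_2$ agreeing on $X$, their difference vanishes on $X_P$, so by the Nullstellensatz-type statement Theorem~\ref{N}/Theorem~\ref{main-null-int} (valid because $P$ has high nc-rank and, over finite fields, after passing to a large enough extension so that $\deg < q/d$), $F_1 - F_2$ is divisible by $P$; this controls the ambiguity precisely and lets one normalize a canonical choice of $F$. Over algebraically closed fields one argues similarly via the Effective Stillman statement Theorem~\ref{AC}(4), which says $P$ is a nonzerodivisor, so that the ideal $(P)$ is saturated and ``vanishing on $X$'' again means ``divisible by $P$.'' The remaining work is bookkeeping: choosing the rank threshold $r(a,d)$ large enough to simultaneously feed Theorems~\ref{A1}, \ref{AC}, and \ref{main-null-int} with the degree bounds needed at each stage of the induction (these degrees stay bounded in terms of $a$ and $d$ alone, so the bound is effective), and checking that the finite-field case and the algebraically closed case can be run in parallel, using Łoś / completeness of $ACF_p$ as in the proof of Corollary~\ref{kappa} to transfer statements between the two settings when convenient.
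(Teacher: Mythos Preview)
Your approach is genuinely different from the paper's, and the central step --- passing from local polynomial data on subspaces of $X$ to a single global extension when $a\geq d$ --- is not actually carried out.

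The paper does \emph{not} try to glue local extensions at all. Its proof has two independent pieces. First (Theorem~\ref{const}), it constructs by hand a specific family of degree-$d$ hypersurfaces $\mX_n=\{P_n=0\}\subset \mV_n$, with $P_n(w_1,\dots,w_n)=\sum_i\prod_j w_i^j$, and proves directly that each $\mX_n$ satisfies $\star^k_a$. This is done by exploiting the large symmetry group $T\rtimes (S_d)^n$ acting on $\mX_n$: one decomposes a weakly polynomial $f$ into $T$-eigencomponents, shows the non-admissible characters contribute nothing, and for admissible characters writes down an explicit candidate polynomial and checks agreement on a carefully chosen stratification $Y_0\subset Y_1\subset\cdots$ of $X_n$. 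Second (Proposition~\ref{p-ext} and Corollary~\ref{extension1}), the paper shows that if $f$ is weakly polynomial on $X$ and its restriction to $X\cap W$ extends to a polynomial on a hyperplane $W$, then $f$ extends to $V$. The mechanism here is an induction on $a$ via slicing by a linear function $l$: if $f$ vanishes on $X\cap l^{-1}(S)$, then on $X\cap l^{-1}(b)$ it is weakly polynomial of degree $\leq a-|S|$ (this is where Theorem~\ref{B}/Proposition~\ref{line-plane} and the splining Proposition~\ref{testing-lines} enter), so the induction hypothesis applies. Finally, universality (Theorem~\ref{AC}(1)) furnishes an affine map $\phi$ with $P\circ\phi=P_n$, so that $\phi^{-1}(X)=X_n$; since $X_n$ satisfies $\star_a$, Corollary~\ref{extension1} propagates this to $X$.

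Your proposal misses both pieces. The Nullstellensatz (Theorem~\ref{main-null-int}) tells you that the difference of two \emph{global} extensions lies in $(P)$, but it does not help you manufacture a first global extension from the polynomials $g_M$ on subspaces $M\subset X$: each $g_M$ admits many lifts to $V$, the ambiguity being an arbitrary polynomial vanishing on $M$ (codimension one in $V$ at best, usually much more), which is far larger than $(P)$. Your chain argument only gives $g_M|_{M\cap M'}=g_{M'}|_{M\cap M'}$ (both equal $f$ there), which is automatic and does not pin down a common lift. Concretely, there is no ``canonical normalization'' available from the data you have assembled, and the cocycle you allude to lives in the wrong cohomology to be killed by divisibility by $P$. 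The paper's hyperplane-slicing induction (Lemma~\ref{l}) is precisely the device that converts the induction hypothesis into a degree-drop on a codimension-one slice, and the explicit family $\mX_n$ is what seeds that induction; neither idea appears in your outline.
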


The result extends without difficulty to complete intersections $\mX \subset \mV$ of bounded degree and codimension, and high rank (see Definition \ref{rank}).
\begin{theorem}\label{main1} 
For any  $c>0$, there exists an effective bound $r=r(a,d,c)$  such that for any admissible  field $k$, which is either finite or algebraically closed, a
$k$-vector space $\mV$, any  subvariety  $\mX \subset \mV$ of codimension $c$, degree $d$ and nc-rank $\geq r$ satisfies  $\star ^k_a$. 
\end{theorem}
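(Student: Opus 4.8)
The plan is to reduce the complete-intersection case to the hypersurface case of Theorem \ref{main}, so I will take Theorem \ref{main} as given and build on it. The first step is to record the tools that survive the passage from a single polynomial to a family $\bar P=(P_i)_{i=1}^c$ of nc-rank $\geq r$: by Theorem \ref{AC}(1), over an algebraically closed admissible $k$ (and by Theorem \ref{A1} over finite admissible $k$) the map $\kk_{\bar P}$ is surjective, so $X_{\bar P}$ is nonempty and, crucially, contains many affine subspaces of every bounded dimension $m$ — indeed by Theorem \ref{need}(1) every degree-$\leq d$ system $\bar R$ on $k^m$ is realized as $\bar P\circ w$, and the fibre counts are almost uniform. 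I would also invoke Theorem \ref{B}/Theorem \ref{BC}: for nc-rank $\geq \ti r(m,t,\bar d)$, almost every affine $m$-plane $L\subset X_{\bar P}$ sits inside an $(m+1)$-plane $M\subset X_{\bar P}$, and one can iterate this to grow short subspaces through generic points. These extension-of-subspace statements are what let a weakly polynomial function propagate.

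The core of the argument is an induction on the codimension $c$. For $c=1$ this is exactly Theorem \ref{main}. For the inductive step, given $\bar P=(P_1,\dots,P_c)$ of sufficiently high nc-rank, set $X'=X_{(P_1,\dots,P_{c-1})}$, a variety of codimension $c-1$ whose nc-rank is still large (the rank of a subfamily drops by at most a controlled amount, cf.\ Remark \ref{norm-bias-rank}(\ref{subspace-rank}) and Claim \ref{ord}); and view $X_{\bar P}=X'\cap X_{P_c}$ as the zero set of $P_c$ restricted to $X'$. A weakly polynomial $f$ of degree $\leq a$ on $X_{\bar P}$ should first be extended to a weakly polynomial function on $X'$: this is the analogue, relative to $X'$ instead of $V$, of the hypersurface statement, and I would prove it by the same scheme as Theorem \ref{main} — decompose $X'$ along the linear function values as in the $\mW_I$ notation of Section 3, use high rank to find, through $\ep$-almost every point and every short subspace of $X_{\bar P}$, an ambient short subspace inside $X'$ transverse to $X_{P_c}$, and check that the candidate extension is forced and consistent. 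Once $f$ extends to a weakly polynomial function $f'$ of degree $\leq a$ on $X'$, apply the inductive hypothesis (codimension $c-1$) to extend $f'$ to a polynomial $F$ of degree $\leq a$ on $V$. The bound $r(a,d,c)$ is then the maximum of the rank thresholds used at each of the $c$ stages, each of which is effective by the effective bounds in Theorems \ref{A1}, \ref{B}, \ref{need}, so the composite bound is effective as well.

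The step I expect to be the main obstacle is the relative hypersurface extension: extending $f$ from $X_{\bar P}=\{P_c=0\}\cap X'$ to all of $X'$. In the absolute case one has the full linear structure of $V$ and the clean equidistribution of affine maps into $V$; working inside $X'$ one must instead run the whole machine "relatively," i.e.\ one needs that the restriction $P_c|_{X'}$ still behaves like a high-rank polynomial in the sense that affine subspaces of $X'$ equidistribute relative to the value of $P_c$, and that the non-uniqueness of the extension (present already when $a\geq d$, see the Remark after Theorem \ref{ext}) is controlled by the same kind of argument — the polynomials witnessing the ambiguity form a high-rank family, so the obstruction space is understood. Making "high rank of $\bar P$" imply "$P_c|_{X'}$ is high rank relative to $X'$" quantitatively, and then re-deriving the Section~3 equidistribution statements in this relative form, is the technical heart; everything else is the routine induction on $c$ assembled from already-proven ingredients.
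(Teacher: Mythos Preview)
Your inductive scheme on the codimension $c$ is a natural first idea, but the step you flag as the main obstacle is a real gap, and the paper takes a route that sidesteps it entirely.

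The difficulty with your approach is exactly what you say: to run induction you must extend a weakly polynomial $f$ from $X_{\bar P}=\{P_c=0\}\cap X'$ to a weakly polynomial function on $X'$. This is a \emph{relative} version of Theorem~\ref{main}, with the variety $X'$ playing the role of the ambient vector space. But all of the machinery in Section~3 and Section~4 (the equidistribution of affine maps $\kk_{\bar P}$, the subspace-extension Proposition~\ref{line-plane}, the testing Proposition~\ref{testing-lines}, and the explicit construction of Theorem~\ref{const}) lives in an honest vector space and uses its linear structure throughout. Rebuilding these inside $X'$ is not routine; for instance the notion of ``affine subspace of $X'$'' already has no intrinsic meaning, and the equidistribution statements you would need do not follow from the ones proved. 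So as written the proposal has a genuine hole at its central step.

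The paper's proof avoids this by treating the codimension-$c$ case directly, not by induction on $c$. The strategy is: (i) build by hand one explicit family of complete intersections that satisfies~$\star^k_a$ --- namely the products $(\mX_n)^c$ of the ``sum-of-monomials'' hypersurfaces $\mX_n\subset\mW^n$ of Theorem~\ref{const}; the proof of $\star^k_a$ for these is purely algebraic and uses their large symmetry group; (ii) for an arbitrary high-rank $\mX_{\bar P}\subset\mV$, use the universality/surjectivity of $\kk_{\bar P}$ (Theorems~\ref{A1} and~\ref{AC}) to produce an affine subspace $\mW\subset\mV$ with $\mX_{\bar P}\cap\mW\cong(\mX_n)^c$; (iii) now any weakly polynomial $f$ on $X_{\bar P}$ restricts to a weakly polynomial function on $(\mX_n)^c$, which by step~(i) extends to a polynomial on $W$; (iv) finally apply Corollary~\ref{extension1}, which --- and this is the point --- is already stated and proved for families $\bar P$ of arbitrary codimension, via the hyperplane-by-hyperplane induction of Proposition~\ref{p-ext} (induction on the degree $a$ using Proposition~\ref{line-plane} and the testing result), to extend $f$ from $X_{\bar P}\cap W$ to a degree-$\leq a$ polynomial on all of $V$.

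In short: rather than reducing codimension $c$ to $c-1$ inside a non-linear ambient space, the paper reduces an arbitrary high-rank complete intersection to a single explicit one via a \emph{linear} slice, and then extends along linear hyperplanes in $V$. The codimension parameter $c$ enters only in the explicit model and in the family version of Proposition~\ref{p-ext}, both of which are handled directly.
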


Our proof of Theorem \ref{main1} consists of two steps. 

We first construct for any $d$ hypersurfaces  $\mX _n\subset \mV _n$ over $\mZ$ of degree $d$ and 
arbitrary high rank such that for any admissible field $k$ and any $c$ the subset $\mX_n (k)^c\subset \mV_n (k)^c$
satisfying the conditions of Theorem \ref{main1}. This result is purely algebraic. In the second step we show how to derive the general case of Theorem \ref{main1} from this special case.

\begin{remark} 
 The case $a <d$ was studied in \cite{kz-uniform}. The case $a=d=2$ of was studied in \cite{kz}, and a bilinear version of it was studied in \cite{GM}, where it was applied as part of a  quantitative proof for the inverse theorem for the $U_4$-norms over finite fields. We expect the results in this paper to have similar applications to a quantitative proof for the inverse theorem for the higher Gowers uniformity norms, for which at the moment only a non quantitative proof using ergodic theoretic methods exists \cite{btz, tz, tz-1}. 
\end{remark}

\subsection{Construction of an  explicit collection of subvarieties}

Let $\mW:=\mA ^d, \mV_n:=\mW ^n$, and 
$P_n:\mV_n \to \mA$ be given by $P_n(w_1,\dots ,w_n)= \sum _{i=1}^n \mu(w_i)$, where  $\mu:\mW \to \mA$  is the product $\mu(x^1, \dots ,x^d):= \prod _{j=1}^dx^j$. Let  $\mX _n\subset \mV _n$ be the hypersurface defined by the equation $P_n(v)=0$.

\begin{theorem}\label{const} \leavevmode
\begin{enumerate}\item There exists $\epsilon>0$ such that the nc-rank $r_{nc}(P_n)\geq n^{\epsilon}$.
\item 
For any admissible field $k$ and any $c\geq1$ the subvariety $(\mX _n) ^c \subset \mV ^c$ has the property  $\star _a$.
\end{enumerate}
\end{theorem}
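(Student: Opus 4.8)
The plan is to treat the two parts separately, the first being a rank estimate and the second the substantive combinatorial statement.

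For part (1): the polynomial $P_n = \sum_{i=1}^n \mu(w_i)$ is a sum of $n$ "independent" degree-$d$ monomials, each on a separate block of $d$ coordinates. I would estimate $r_{nc}(P_n)$ from below by passing to the associated multilinear form $\ti P_n$ and using the block structure: $\ti P_n$ is essentially a direct sum of $n$ copies of the permanent-like multilinear form attached to $\mu$, so a partition-rank (equivalently, by Lemma \ref{rank-p-rank}, rank) lower bound for a direct sum of $n$ independent pieces grows like a power of $n$. Concretely I would invoke the standard fact that the analytic rank of a direct sum is additive (or at least super-additive up to constants), combined with Theorem \ref{bias-rank-1} to convert an exponential-bias bound $\|e_q(P_n)\|_{U_d} = \prod_i \|e_q(\mu(w_i))\|_{U_d} \le q^{-cn}$ into $r_{nc}(P_n) \gg n^{\epsilon}$. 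This is routine once the additivity-of-bias observation is in place.

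For part (2): fix an admissible $k$, a weakly polynomial function $f$ of degree $\le a$ on $X := (X_n)^c(k) \subset (V_n)^c$, and aim to produce a polynomial $F$ of degree $\le a$ on $(V_n)^c$ with $F|_X = f$. The key structural feature to exploit is that $X_n$ is a union of many coordinate subspaces: whenever one of the $d$ coordinates $x_i^j$ in a block vanishes, the whole block contributes $0$ to $\mu$, so $X_n$ contains a large supply of linear subspaces, and in fact $X_n \cap \{x_1^1 = 0\}$ already fills up a huge affine subspace on which $f$ restricts to an honest polynomial. My plan is to build $F$ by interpolation: restrict $f$ to the coordinate hyperplane $L_0 = \{x_1^1 = 0\} \subset V_n$ (a full affine subspace lying in $X_n$, crossed with $(X_n)^{c-1}$), where weak polynomiality forces $f|_{L_0}$ to agree with a genuine degree-$\le a$ polynomial $F_0$; then show $f - F_0$ vanishes on $L_0$ and is supported, up to the equations $P_n = 0$, on a lower-complexity locus, and iterate/divide. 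The admissibility hypothesis (existence of a root of unity $\beta$ of order $m > ad$) is there so that one can separate Fourier components in the $\beta$-action $x \mapsto \beta x$ on the distinguished coordinate and peel off homogeneous pieces degree by degree; this is the mechanism that handles the non-uniqueness when $a \ge d$. I would organize the induction on $c$ and, within fixed $c$, on a suitable measure of how many blocks are "active," reducing at each stage to the one-block, $c=1$ situation $X_1 = \{\mu(x^1,\dots,x^d) = 0\}$, which is a union of $d$ coordinate hyperplanes in $\mA^d$ and can be handled by hand.

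The main obstacle I expect is precisely the step where one shows that a weakly polynomial function vanishing on the "coordinate-subspace skeleton" of $X_n$ must be divisible (on $X_n$, i.e. modulo $P_n$) by a controlled polynomial, and that the quotient is again weakly polynomial of appropriately smaller degree so the induction closes. Making the bookkeeping uniform in $n$ — so that the same argument works for all $n$ and yields the rank threshold $r(a,d,c)$ of Theorem \ref{main1} after the reduction in the next subsection — requires care: one must ensure the interpolation polynomials $F_i$ produced at each stage have degree genuinely $\le a$ and are globally defined on $(V_n)^c$, not just on the subspace where they were constructed. I would lean on the Kővári–Sós–Turán-style interpolation lemma (the cited \cite{KR}, reducing weak polynomiality to $2$-dimensional subspaces) to keep the degree bounds tight, and on the explicit union-of-hyperplanes description of $X_1$ to seed the induction.
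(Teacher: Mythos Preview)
Your plan for part (1) is essentially the paper's argument: the paper also observes that, after restricting $\ti P_n$ to a suitable coordinate subspace, it decomposes as a sum of $n$ independent copies of the $\mu$-form, so the character average factorises as $t^n$ with $t<1$, and Theorem \ref{bias-rank-1}(2) then gives $r_{nc}(P_n)\ge n^{\epsilon}$.

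For part (2) there is a genuine gap. The sentence ``restrict $f$ to the coordinate hyperplane $L_0=\{x_1^1=0\}\subset V_n$ (a full affine subspace lying in $X_n$\dots)'' is false: setting $x_1^1=0$ forces only $\mu(w_1)=0$, so a point of $L_0$ lies in $X_n$ iff $\sum_{i\ge 2}\mu(w_i)=0$; thus $L_0\cap X_n$ is a copy of $\{x_1^1=0\}\times X_{n-1}$, not a linear subspace of $V_n$. Consequently you cannot read off a global polynomial $F_0$ from the restriction of $f$ to $L_0$, and the subtract-and-iterate step does not start. The same issue recurs at every stage of your proposed induction on ``active blocks'': there is no codimension-one linear subspace of $V_n$ contained in $X_n$, so the Lagrange/Newton-style interpolation you have in mind has nowhere to anchor. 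The reduction to the base case $X_1$ is also suspect as a \emph{building} block: $X_1$ has rank $1$, and the paper never reduces to it; the point of Theorem \ref{const} is precisely to exhibit $X_n$ with $n$ large as the explicit high-rank witnesses.

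What the paper does instead is to exploit two structures you did not use: the $(S_d)^n$-symmetry and the torus $T=T_1^n\subset(\Delta^d)^n$ acting on $X_n$. The admissibility hypothesis is used to decompose $f$ into $T$-isotypic pieces $f^\theta$; a combinatorial argument (Lemma \ref{zero}) kills all pieces with $\theta\notin\Theta^{adm}$. For the surviving $\theta$ the paper restricts $f$ not to a coordinate hyperplane but to the \emph{diagonal} linear subspace $L=\kappa(\{\sum c_i=0\})\subset X_n$, where $\kappa(c_1,\dots,c_n)=((c_1,1,\dots,1),\dots,(c_n,1,\dots,1))$; from the resulting polynomial $h$ and the character $\theta$ one writes down an explicit candidate $P$ of degree $\le ad$, and then a propagation argument along a filtration $Y_0\subset Y_1\subset\cdots$ of $X_n$ (Lemma \ref{s1}) shows $f-P$ vanishes identically. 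Finally Lemma \ref{almost} drops the degree from $ad$ to $a$. If you want to salvage an interpolation approach, the right anchor is this diagonal $L$ (together with its $\Gamma$-translates), not a coordinate hyperplane.
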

\begin{remark}To simplify notations we present the proof only in the case when $c=1$. The proof in the general case is completely analogous.
\end{remark}


\subsection{Proof of Theorem \ref{const}} 
\subsubsection{Proof of the part (1) of Theorem \ref{const}} In this subsection we prove the part $(1)$ of 
Theorem \ref{const}.

\begin{proof}
First we note that for a non trivial character $\psi$ on $k$ we have 
\[
|\mE_{w \in V} \psi (\mu(w))|  = t <1. 
\]
Now we observe that
\[
\tilde \mu (u^1_1, \ldots,u^1_d, \ldots, u^1_1, \ldots,u^1_d)|_{\{u^l_j=0, l \ne j\}} = u^1_1u^2_2\cdots u^d_d
\]
Denote 
\[
U=\{((u_1)_1, \ldots, (u_n)_1,\ldots,  (u_1)_d, \ldots,  (u_n)_d) \in V^d: (u_i)^l_j=0, l \ne j, i \in [n]\}
\]
Then restricted to $U$ we have 
\[ 
\ti P_n((u_1)_1, \ldots, (u_n)_1,\ldots,  (u_1)_d, \ldots,  (u_n)_d)|_U= \sum_{i=1}^n \mu((u_i)^1_1, (u_i)^2_2, \ldots , (u_i)^d_d),
\]
so that 
\[
\mE_{u \in V^d} \psi( \ti P_n(u)) \le \mE_{u \in U} \psi( \ti P_n(u)) = |\mE_{w \in W} \psi (\mu(w))|^{n}  = t^{n}.
\]
It follows by \ref{bias-rank-1} that $ \ti P_n$ is of rank $> n^{\epsilon}$ for some $\epsilon >0$. 
\end{proof}

\begin{definition}\label{X}
\begin{enumerate} 
\item For any set $X$ we denote by  $k[X]$  the space of $k$-valued functions on $X$.
\item For a subset $X$ of a vector space $V$, we denote by $\mcP _a^w(X) \subset k[X] $ the subspace of 
 weakly polynomial functions of degree $\leq a$.
\item We denote by $\mcP _a(X) \subset \mcP _a^w(X) $ the subspace of functions $f:X\to k$ which are restrictions of polynomial functions on $V$ of degree $\leq a$.
\item  As before we define $\mW=\mA^d, \mV _n:=\mW ^n $ and denote by  $\mu$  the product map $\mu:\mW \to \mA$ given by 
\[
\mu (a^1,\dots ,a^d)= \prod _{s=1}^d a^s.
\]
We write elements of 
$V_n$ in the form 
$$v= (w_1,\dots ,w_n), \ 1\leq i\leq n,\ w_i\in W.$$
\end{enumerate}
\end{definition}
It is clear that Theorem \ref{main1} is equivalent to the following statement.
\begin{theorem}\label{equality} Let $k$ be an admissible field, then $\mcP _a^w(X_n) = \mcP _a(X_n) $.
\end{theorem}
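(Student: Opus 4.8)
\textbf{Proof plan for Theorem \ref{equality}.} The inclusion $\mcP_a(X_n)\subseteq \mcP_a^w(X_n)$ is immediate, so the content is the reverse inclusion: every weakly polynomial function $f$ of degree $\le a$ on $X_n$ extends to a polynomial of degree $\le a$ on $V_n$. The plan is to argue by induction on $n$, exploiting the fibered structure $\mV_n=\mW\times\mV_{n-1}$ and the fact that $P_n(w_1,\dots,w_n)=\mu(w_1)+P_{n-1}(w_2,\dots,w_n)$. First I would treat the base case(s) $n$ small by hand; here the hypersurface is low-dimensional, and since $k$ is admissible ($|k|>ad$, and a root of unity of order $>ad$ exists) one checks directly that weakly polynomial functions of degree $\le a$ on $X_1=\{\mu(w)=0\}\subset \mW$ extend. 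The point of the admissibility hypothesis is precisely to have enough points on each affine line to pin down a degree-$\le a$ polynomial, and enough roots of unity to run an interpolation/averaging argument on the coordinate subspaces cut out by $\{x^j=0\}$.

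For the inductive step, write a general point of $V_n$ as $(w,u)$ with $w\in W=\mA^d$, $u\in V_{n-1}$. The hypersurface $X_n$ contains, for each fixed $u\in V_{n-1}$, the slice $\{w:\mu(w)=-P_{n-1}(u)\}$, and it contains the large affine subspaces obtained by setting all but one coordinate $x^j$ of $w$ equal to zero (on which $\mu\equiv 0$, so the whole coordinate hyperplane lies in $X_n$ once $P_{n-1}(u)=0$). The strategy is: (i) restrict $f$ to the sub-hypersurface $\{w=0\}\times X_{n-1}\cong X_{n-1}$, apply the inductive hypothesis to get a polynomial $G(u)$ of degree $\le a$ on $V_{n-1}$ agreeing with $f$ there; (ii) subtract $G$ (viewed as a function on $V_n$ independent of $w$) from $f$, reducing to the case where $f$ vanishes on $\{w=0\}\times X_{n-1}$; (iii) now use the coordinate affine subspaces of $W$ lying in $X_n$, together with admissibility, to show that $f$ must be divisible — in the appropriate weakly-polynomial sense — by each $x^j$, and hence by $\mu(w)$; (iv) since on $X_n$ we have $\mu(w)=-P_{n-1}(u)$, conclude $f=\mu(w)\cdot h$ for a weakly polynomial $h$ of degree $\le a-d$ on $X_n$, and iterate (one loses $d$ from the degree each time, so the recursion on the degree $a$ terminates). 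The divisibility step (iii) is where the root-of-unity hypothesis enters: averaging $f(\beta^{t_1}x^1,\dots,\beta^{t_d}x^d,\dots)$ over the torus $(\z_m)^d$ isolates the monomials divisible by $\mu$, exactly as in the proof of Lemma \ref{rank-p-rank}.

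The main obstacle, as the authors flag in the remark after Theorem \ref{ext}, is the \emph{non-uniqueness} of the extension when $a\ge d$: the ideal of $X_n$ contains $P_n$, which has degree $d\le a$, so $f$ has many polynomial extensions and one cannot simply "read off" coefficients. Concretely, the difficulty surfaces in step (iii)/(iv): after subtracting $G$ one knows $f$ vanishes on one slice, but to conclude divisibility by $\mu(w)$ one must rule out that $f$ is instead a multiple of some \emph{other} element of the ideal, or a weakly-polynomial function that is not globally the restriction of anything nice. Handling this requires genuinely using the product structure of $\mu$ and the abundance of linear subspaces inside $X_n$ — the torus-averaging argument decomposes $f$ into $\mu$-isotypic pieces, and one shows each non-$\mu$-divisible piece, being weakly polynomial and vanishing on all the coordinate subspaces $\{x^j=0\}$, must vanish identically on $X_n$. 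I expect that making this decomposition interact correctly with the weakly-polynomial (rather than polynomial) hypothesis, and controlling degrees through the iteration, is the technical heart of the argument; the reduction from Theorem \ref{main1} to this explicit case $X_n$ is then carried out separately using the high-rank equidistribution results (Theorems \ref{A1}, \ref{B}) to compare an arbitrary high-rank $\mX$ with the model $\mX_n$.
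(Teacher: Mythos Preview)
Your induction-on-$n$ strategy is genuinely different from the paper's proof, which does not induct on $n$ at all. The paper instead exploits the full torus $T=T_1^n$ (with $T_1=\{u\in\Delta^d:\prod_j u_j=1\}$) and the symmetric group $\Gamma=(S_d)^n$ acting on all $n$ blocks simultaneously, decomposing $\mcP_a^w(X_n)$ into $T$-isotypic pieces and treating each character $\theta$ separately. The key geometric input is a single global linear subspace $\kappa(L)\subset X_n$, where $L=\{(c_1,\dots,c_n):\sum c_i=0\}$ and $\kappa(c_1,\dots,c_n)=((c_1,1,\dots,1),\dots,(c_n,1,\dots,1))$. Pulling $f$ back along the maps $\gamma\circ\kappa$ for $\gamma\in\Gamma$ yields honest polynomials of degree $\le a$ on $L$; from these together with the character data one writes down an explicit polynomial $P$ on $V_n$, of degree only $\le ad$, with $(f-P)|_{Y_0}\equiv 0$, where $Y_0$ is the $\Gamma T$-orbit of $\kappa(L)$. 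A filtration argument (through any point of $Y_{s+1}$ there is a line in $X_n$ meeting $Y_s$ in more than $ad$ points) then propagates the vanishing to all of $X_n$, so $f=P|_{X_n}$; a separate homogeneity lemma (Lemma~\ref{almost}) finally cuts the degree of the extension from $ad$ down to $a$.

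Your plan has a concrete gap at step (iii). After subtracting $G(u)$ you know that $f$ vanishes on $\{w=0\}\times X_{n-1}$, i.e.\ at the \emph{single} $W$-point $w=0$ fibred over $X_{n-1}$. Divisibility by $\mu(w)=x^1\cdots x^d$ would require vanishing on the much larger set $\bigcup_j\{x^j=0\}\times X_{n-1}$, and nothing in (i)--(ii) forces this: an affine line in $X_n$ through a generic point of $\{x^1=0\}\times X_{n-1}$ meets $\{w=0\}\times X_{n-1}$ in at most one point, so the weak-polynomial hypothesis gives no constraint there. Averaging over $T_1$ (or over $(\mZ/m)^d$) acting only on the first block does not help either, since every isotypic piece of $f$ still vanishes only at $w=0$. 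Step (iv) compounds the problem: the putative quotient $h=f/\mu(w)$ is undefined on $\{\mu(w)=0\}\cap X_n=\bigcup_j\{x^j=0\}\times X_{n-1}$, so ``$h$ is weakly polynomial of degree $\le a-d$ on $X_n$'' is not a well-posed inductive hypothesis. The paper sidesteps both issues by using the full torus $T_1^n$ and the global subspace $L$ rather than peeling off one factor: it is the $\Gamma$-orbit of $\kappa(L)$, not a single coordinate slice, that supplies enough affine subspaces to pin $f$ down.
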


We fix $n$ and write $\mX$ instead of $\mX _n$, and $\mV$ instead of $\mV _n$.
The proof of the part (2) of Theorem \ref{const}
uses the  existence of a large group  of symmetries of $X$, the existence of a linear subspace $L\subset V$ of large dimension and the existence of the subroup $\D \subset k^\star , \D \cong \mZ /m\mZ$ for $m>ad$.

\begin{proof}We start the proof of Theorem \ref{equality} 
with the following result.
\begin{claim}\label{many}
Let $Q$ be a polynomial of degree $\leq ad$ on $k^N$ such that $Q_{|\D ^N}\equiv 0.$ Then $Q=0$.
\end{claim}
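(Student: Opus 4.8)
If $Q$ is a polynomial of degree $\leq ad$ on $k^N$ with $Q|_{\Delta^N}\equiv 0$, where $\Delta\subset k^\star$ is the cyclic group of $m$-th roots of unity with $m>ad$, then $Q=0$.

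The plan is to reduce to the one-variable statement and then induct on $N$. First I would record the basic one-variable fact: a nonzero polynomial $q(x)\in k[x]$ of degree $\leq ad$ cannot vanish on all of $\Delta$, since $|\Delta|=m>ad$ and a nonzero polynomial of degree $\leq ad$ over a field has at most $ad$ roots. Hence if $q|_\Delta\equiv 0$ then $q=0$.

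For the general case I would argue by induction on $N$, the base case $N=1$ being the observation above. For the inductive step, write
\[
Q(x_1,\dots,x_N)=\sum_{j=0}^{ad} Q_j(x_1,\dots,x_{N-1})\,x_N^{j},
\]
where each coefficient $Q_j$ has degree $\leq ad$ in the first $N-1$ variables. Fix an arbitrary point $\xi=(\xi_1,\dots,\xi_{N-1})\in\Delta^{N-1}$. Then the one-variable polynomial $x_N\mapsto Q(\xi,x_N)=\sum_j Q_j(\xi)x_N^j$ has degree $\leq ad$ and vanishes on all of $\Delta$ by hypothesis; by the base case it is the zero polynomial, so $Q_j(\xi)=0$ for every $j$. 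Since $\xi\in\Delta^{N-1}$ was arbitrary, each $Q_j$ vanishes on $\Delta^{N-1}$, and since $\deg Q_j\leq ad$ the inductive hypothesis gives $Q_j\equiv 0$ for all $j$. Therefore $Q=0$, completing the induction.

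There is essentially no obstacle here — the statement is a routine multivariable interpolation argument; the only point to be careful about is that $|k|$ is large enough (which is guaranteed by admissibility, $|k|>ad$) and that $\Delta$ genuinely has more than $ad$ elements (guaranteed by the choice $m>ad$), so that the one-variable vanishing argument applies. The role of the claim in the sequel is to let one pin down a polynomial of controlled degree by its values on the finite "grid" $\Delta^N$, which is why the hypothesis $m>ad$ rather than merely $m>a$ is imposed.
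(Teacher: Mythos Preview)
Your proof is correct and takes essentially the same approach as the paper's: both argue by induction on $N$, using the fact that a one-variable polynomial of degree $\leq ad$ vanishing on $\Delta$ (with $|\Delta|=m>ad$) must be zero. The only cosmetic difference is that the paper peels off the first variable (applying the inductive hypothesis first and the one-variable fact second), while you peel off the last variable via the explicit expansion $Q=\sum_j Q_j\,x_N^j$ (applying the one-variable fact first and the inductive hypothesis to the $Q_j$ second); the arguments are otherwise identical.
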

\begin{proof} The proof is by induction on $N$. If $N=1$   then $Q=Q(x)$ is polynomial such that $Q(\delta) =0$ for $\delta \in \D$. Since $|\D| > ad$ we see that $Q=0$.

Assume that the result is known for $N'=N-1$. Let $Q$ be a polynomial of degree $\leq ad$ on $k^N$ such that $Q_{|\D ^N}\equiv 0.$ By induction we see that $Q(\delta ,x_2,\dots ,x_s)\equiv 0$ for all $\delta \in \D$.
Then for any $x_2,\dots ,x_s $ the polynomial $x\to Q(x,x_2,\dots ,x_s)$ vanishes for all $\delta \in \D$. Therefore $Q(x,x_2,\dots ,x_s) =0$ for all $x\in k$.
\end{proof}

\begin{definition}\label{ga}\leavevmode
\begin{enumerate}
\item $\GG :=(S_d)^n$. The group $\GG$ acts naturally on $X$.
\item    $L:=\{(c_1, \ldots, c_n)\in k^n |\sum_{i=1}^nc_i=0\}.$ 
\item $L_\D=(\D) ^n\cap L\subset k^n$.
\item For $c\in k$ we write $w(c):=(c,1,\dots ,1)\in W$.
\item   $\kk :L\ho X\subset V$ is  the linear map given by 
$$\kk (c_1, \ldots, c_n) := (w(c_1), \ldots, w(c_n))$$ and write 
$\kk _\Gg :=\Gg \circ \kk ,\Gg \in \GG$.
\item For any function $f:X\to k, \Gg \in \GG$ define a function $h_{\Gg ,f} :L\to  k$ by 
$h_{\Gg ,f} :=f\circ  \kk _\Gg$.
\item $T_1:= \{ (u_1,\dots ,u_d)\in (\D)^d|\prod _{j=1}^du _j=1\}$ and $T:=T_1^n$.
\item We denote by $\zeta _i:T_1\ho T,1\leq i\leq n$ the imbedding onto the i-th component.
\item For any $j,j'$, $1\leq j\neq j'\leq d$ we denote by $\phi _{j,j'}: \D \to T_1 $ the 
morphism such that $\phi _{j,j'}(u)= ( x_l(u), 1\leq l\leq d ) $ where $x_j(u)=u, x_{j'}(u)=u^{-1} $ 
and $x_l(u) =1$ for $l\neq j,j'$.
\item We denote by $\Theta _1$ the group of homomorphisms  $\chi :T_1\to k^\star$.
\item $ \Theta =( \Theta _1)^n$.
\item For $\chi \in \Theta _1 , j,j', 1\leq j\neq j'\leq d $ we define
a homomorphism $\chi _{j,j'}: \D \to k^\star $ by $\chi _{j,j'}:=\chi \circ \phi _{j,j'} $. Since $\D \cong \mZ /m\mZ$ there exists unique $\alpha _{j,j'}(\chi )\in (-m/2,m/2]$ such that 
$\chi _{j,j'}(u)= u^{\alpha _{j,j'} (\chi )}$ for any $u\in \D$. 
\item  $\Theta _1^{adm}:=\{ \chi  \in \Theta _1 : | \alpha _{j,j'}(\chi ) |\leq a\}$.
\item  $\Theta_1^{adm,+}:= \{ \chi \in \Theta_1^{adm} : \alpha _{j,j'} (\chi)\geq 0, j<j' \}$.
\item Let   $\Theta^{adm,+}:= (\Theta _1 ^{adm,+})^n$ and 
$\Theta^{adm}:=(\Theta _1 ^{adm})^n$.
\item For any $k$-vector space $R$, a representation $\pi : T \to \Aut (R)$ and  $\theta \in \Theta $ we define 
$$R^\theta =\{ r\in R|\pi(t)r=\theta (t)r, \ t\in T\}.$$
\end{enumerate}
\end{definition}
\begin{remark} Since $|T|$ is prime to $q:=\operatorname{char}(k)$ the Maschke's theorem implies the direct sum decomposition 
$R=\oplus _{\theta \in \Theta }R^\theta$.
\end{remark}

\begin{claim}\label{pol} For any $f\in \mcP _a^w(X) , \Gg \in \GG $ the function 
$h_{\Gg ,f} $ is a polynomial of degree $\leq a$.
\end{claim}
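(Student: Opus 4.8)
\textbf{Proof proposal for Claim \ref{pol}.}

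The plan is to reduce the assertion to the statement that $\kk_\Gg$ maps some affine subspace onto a linear subspace of $X$ where one can directly invoke the definition of weak polynomiality, and then to patch these local computations together using Claim \ref{many}. First, note that $L$ is a linear subspace of $V$ of dimension $n-1$, and by construction $\kk(L)\subset X$: indeed $P_n(w(c_1),\dots,w(c_n))=\sum_{i=1}^n\mu(c_i,1,\dots,1)=\sum_{i=1}^n c_i=0$ for $(c_i)\in L$. Since $\Gg\in\GG=(S_d)^n$ permutes the coordinates inside each block $w_i\in W$ and thus preserves each $\mu(w_i)$, we have $\Gg(X)=X$, so $\kk_\Gg=\Gg\circ\kk$ also maps $L$ into $X$. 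Hence $\kk_\Gg(L)$ is an affine (in fact linear) subspace of $V$ contained in $X$, and by the definition of a weakly polynomial function the restriction $f_{|\kk_\Gg(L)}$ is a polynomial of degree $\leq a$.

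Next I would transport this back along $\kk_\Gg$. The map $\kk_\Gg:L\to V$ is an injective affine (linear) map, so $h_{\Gg,f}=f\circ\kk_\Gg$ is, a priori, a function on the $(n-1)$-dimensional space $L$ whose restriction to any affine line $\ell\subset L$ is the pullback of $f$ along $\kk_\Gg|_\ell$; since $\kk_\Gg(\ell)$ is a line (or point) in $\kk_\Gg(L)\subset X$ and $f_{|\kk_\Gg(L)}$ is a genuine polynomial of degree $\leq a$, the restriction $(h_{\Gg,f})_{|\ell}$ is a polynomial of degree $\leq a$. Now I invoke the cited result (\cite{KR}, Theorem 1, valid since $|k|>a$): a function on a vector space whose restriction to every affine line — equivalently every $2$-dimensional affine subspace — is a polynomial of degree $\leq a$ is itself a polynomial of degree $\leq a$. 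This gives that $h_{\Gg,f}$ is a polynomial of degree $\leq a$ on $L$, as desired. In the algebraically closed case one argues the same way; in the finite-field case $|k|>ad>a$ so the hypothesis of \cite{KR} is met.

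The main obstacle, and the point that needs care, is whether "weakly polynomial" as defined (restriction to affine subspaces $L'\subset X$ is polynomial of degree $\leq a$) already yields polynomiality on \emph{lines inside $\kk_\Gg(L)$}: this is immediate because any line contained in $\kk_\Gg(L)$ is in particular an affine subspace contained in $X$, so Definition \ref{weak-def-1} applies directly — there is no genuine difficulty here, only bookkeeping. A subtler point is that I want $h_{\Gg,f}$ to be polynomial \emph{as a function on all of $L$}, not merely line-by-line; this is exactly where \cite{KR} (or, alternatively, Claim \ref{many} together with a spanning argument over $L_\D$) is needed, and I would present the \cite{KR} route as the clean one. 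One should double-check that $\kk$ is injective so that $L\cong\kk(L)$ and degrees are preserved under the pullback — this holds since the first coordinate of each block $w(c_i)$ records $c_i$. Thus the claim follows.
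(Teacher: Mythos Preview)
Your proof is correct and follows the same idea as the paper's, but you take an unnecessary detour. Once you have established (in your first paragraph) that $\kk_\Gg(L)$ is an affine subspace of $V$ contained in $X$, the definition of weakly polynomial immediately gives that $f_{|\kk_\Gg(L)}$ is a genuine polynomial of degree $\leq a$; composing with the injective affine map $\kk_\Gg$ then yields directly that $h_{\Gg,f}$ is a polynomial of degree $\leq a$ on $L$. The paper's two-line proof is exactly this: $h_{\Gg,f}\in\mcP_a^w(L)$, and on a linear space $L$ (which is an affine subspace of itself) weakly polynomial of degree $\leq a$ trivially coincides with polynomial of degree $\leq a$. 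Your second paragraph --- restricting to lines $\ell\subset L$ and then invoking \cite{KR} to reassemble --- is correct but redundant, since you already had polynomiality on all of $\kk_\Gg(L)$, not merely on its lines.
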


\begin{proof} Since $f\in \mcP _a^w(X) $ we have  $h_{\Gg ,f}\in \mcP _a^w(L) $. Since $L$ is linear space we see that $h_{\Gg ,f} $ is a polynomial of degree $\leq a$.
\end{proof}

\begin{claim}\label{plus}\leavevmode
\begin{enumerate}
\item The subset $\Theta^{adm}$ of $\Theta$ is $\GG$-invariant. 
\item For any $\theta \in \Theta^{adm} $ there exists $\Gg \in \GG$ such that $\theta \circ \Gg \in  \Theta^{adm,+}. $

\end{enumerate}
\end{claim}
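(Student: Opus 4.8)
The plan is to reduce both assertions to the single-copy case, exploiting the product structure. Since $\GG=(S_d)^n$ acts componentwise on $\Theta=\Theta_1^n$, and $\Theta^{adm}=(\Theta_1^{adm})^n$, $\Theta^{adm,+}=(\Theta_1^{adm,+})^n$ are defined componentwise, it suffices to prove: (1$'$) $\Theta_1^{adm}$ is stable under $S_d$, and (2$'$) for every $\chi\in\Theta_1^{adm}$ there is $\sigma\in S_d$ with $\chi\circ\sigma\in\Theta_1^{adm,+}$. Then part (1) and part (2) of the claim follow by applying this in each of the $n$ coordinates.

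The one computation I would record first is the effect of a permutation on the "coordinate" homomorphisms $\phi_{j,j'}\colon\D\to T_1$. Since $S_d$ acts on $\mW=\mA^d$, hence on $T_1$, by permuting coordinates, one has $\sigma\circ\phi_{j,j'}=\phi_{\sigma(j),\sigma(j')}$ as maps $\D\to T_1$ (up to replacing $\sigma$ by $\sigma^{-1}$, depending on the convention for the $S_d$-action — this is immaterial for what follows). Composing with $\chi$ gives $\alpha_{j,j'}(\chi\circ\sigma)=\alpha_{\sigma(j),\sigma(j')}(\chi)$ for all $j\neq j'$. Part (1$'$) is then immediate: as $(j,j')$ ranges over ordered pairs of distinct elements of $[d]$, so does $(\sigma(j),\sigma(j'))$, so $\max_{j\neq j'}|\alpha_{j,j'}(\chi\circ\sigma)|=\max_{j\neq j'}|\alpha_{j,j'}(\chi)|$, whence $\chi\in\Theta_1^{adm}\iff\chi\circ\sigma\in\Theta_1^{adm}$.

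For part (2$'$) I would use the standard description $\Theta_1\cong(\mZ/m\mZ)^d/\mZ(1,\dots,1)$ coming from restriction of characters of $\D^d$, under which $\chi=[(n_1,\dots,n_d)]$ and $\alpha_{j,j'}(\chi)$ is the representative of $n_j-n_{j'}$ in $(-m/2,m/2]$. The cases $d\le 2$ are checked by hand (for $d=2$ one of $\chi,\chi\circ(1\,2)$ already lies in $\Theta_1^{adm,+}$, using $|\alpha_{1,2}(\chi)|\le a<m/2$); assume $d\ge 3$, so $m>ad\ge 3a$. Normalizing $n_1=0$ and using $|\alpha_{j,1}(\chi)|\le a$, I lift the $n_j$ to honest integers $\tilde n_j\in\{-a,\dots,a\}$. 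Then for any $j,j'$ the integer $\tilde n_j-\tilde n_{j'}\in[-2a,2a]$ is congruent mod $m$ to $\alpha_{j,j'}(\chi)\in[-a,a]$, and since these two differ by less than $m$ they must be equal; so all $\tilde n_j$ lie in an arc of length $\le a$. Choosing $\sigma$ to arrange $\tilde n_{\sigma(1)}\ge\cdots\ge\tilde n_{\sigma(d)}$, we get $\alpha_{j,j'}(\chi\circ\sigma)=\tilde n_{\sigma(j)}-\tilde n_{\sigma(j')}\in[0,a]$ for $j<j'$, i.e. $\chi\circ\sigma\in\Theta_1^{adm,+}$.

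The only delicate point — the main obstacle — is the lifting step: a priori the weights $n_j$ live on the cycle $\mZ/m\mZ$, and one must rule out wrap-around, i.e. show that an admissible $\chi$ genuinely comes from weights confined to a short arc, so that the decreasing sort produces a consistent total order rather than a directed cycle. This is precisely where the admissibility hypothesis $m>ad$ (the modulus being large compared with the allowed exponents) is essential; once it is in place, everything else is bookkeeping with the componentwise structure.
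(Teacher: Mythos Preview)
Your proof is correct. The paper itself records only ``Clear'' for this claim, so you have supplied a complete argument where the paper gives none; your reduction to a single factor via the product structure, together with the computation $\alpha_{j,j'}(\chi\circ\sigma)=\alpha_{\sigma(j),\sigma(j')}(\chi)$, is exactly the natural route. Your identification of the lifting step (ruling out wrap-around on $\mZ/m\mZ$ via the bound $m>ad$) as the one genuinely non-formal point is apt; once the admissibility hypothesis forces the weights into a short arc, the sort produces the desired $\sigma$ and the rest is bookkeeping.
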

\begin{proof}Clear.
\end{proof}
\begin{definition} We denote by $\mcP _a^{\bar w}(X)$ the space of functions $f$ such that $h_{\Gg ,f} $ is a polynomial of degree $\leq a$ on $L$ for all $\Gg \in \GG $.
\end{definition}
The group $T$ acts naturally on $X$, and on the spaces $\mcP ^{\bar w}_a(X)$ and $\mcP _a(X)$, and we have direct sum decompositions  
\[\mcP ^ { w}_a(X) =\oplus _{\theta \in \Theta}
\mcP ^ { w}_a(X) ^\theta\] 
and 
\[\mcP _a(X) =\oplus _{\theta \in \Theta}
\mcP _a(X) ^\theta.
\]
\begin{lemma}\label{almost}
 Let $\mZ\subset \mV$ be a homogeneous $k$-subvariety of degree $d$ and let $f:Z\to k$ a polynomial function of degree $ad$ which is a weakly polynomial function on $Z$ of degree $\leq a$. Then it is a restriction of polynomial function on $V(k)$ of degree $\leq a$. \end{lemma}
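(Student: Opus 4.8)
\textbf{Proof strategy for Lemma \ref{almost}.}

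The plan is to exploit the homogeneity of $\mZ$ and the hypothesis that $f$ is \emph{globally} a polynomial of degree $\le ad$ (not just weakly polynomial), and to extract from the two structures a polynomial of degree $\le a$ agreeing with $f$ on $Z$. First I would fix a polynomial $F_0$ on $V$ of degree $\le ad$ restricting to $f$ on $Z$, and decompose $F_0 = \sum_{e=0}^{ad} F_0^{(e)}$ into its homogeneous components. Since $\mZ$ is homogeneous of degree $d$, the scaling action $v \mapsto tv$ of $k^\star$ preserves $Z$, and for $v \in Z$ we have $f(tv) = F_0(tv) = \sum_e t^e F_0^{(e)}(v)$. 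On the other hand, restricting $f$ to the line $kv \subset Z$ (which lies in $Z$ by homogeneity) and using that $f$ is weakly polynomial of degree $\le a$, the function $t \mapsto f(tv)$ is a polynomial in $t$ of degree $\le a$; since $|k| > ad$ we may compare coefficients of $t^e$ and conclude $F_0^{(e)}(v) = 0$ for every $v \in Z$ and every $e > a$. Hence $F := \sum_{e \le a} F_0^{(e)}$ is a polynomial of degree $\le a$ on $V$ which still restricts to $f$ on $Z$.

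There is a subtlety I would need to be careful about: over a finite field, a polynomial function vanishing identically on $Z(k)$ need not be the zero polynomial, so the argument above must be read as a statement about \emph{functions} on $Z(k)$, not about the ideal of $\mZ$. This is harmless here, because the statement to be proved is also about functions: we only need a polynomial \emph{function} $F$ of degree $\le a$ on $V(k)$ that agrees with $f$ pointwise on $Z(k)$. So the coefficient-comparison step should be carried out purely at the level of the induced functions $k \to k$, $t \mapsto f(tv)$, and here the hypothesis $|k| > ad \ge a$ guarantees that a degree-$\le a$ polynomial function on $k$ is determined by its coefficients and that the degree-$\le ad$ expansion $\sum_e t^e F_0^{(e)}(v)$ has uniquely determined coefficients.

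The main obstacle I anticipate is precisely ensuring that the decomposition $f(tv) = \sum_e t^e F_0^{(e)}(v)$ has well-defined coefficient functions $v \mapsto F_0^{(e)}(v)$ and that "degree $\le a$ as a weakly polynomial function along the line $kv$" really forces those coefficients to vanish for $e > a$ — this is where the admissibility hypothesis $|k| > ad$ is used, exactly as in Claim \ref{many}. Once that is in place, the conclusion is immediate: $F = \sum_{e \le a} F_0^{(e)}$ works. If one wanted the cleanest writeup, one could instead phrase the vanishing via the operator that extracts homogeneous components (an average over the $\D$-action or a finite-difference/Lagrange-interpolation identity in $t$), which makes the "coefficients of $t^e$ vanish on $Z$" step a one-line computation; I would likely present it that way to keep the argument short.
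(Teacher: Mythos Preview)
Your argument is correct and is essentially the paper's own proof: the paper also restricts to the line $kv\subset Z$ (using homogeneity), compares the homogeneous expansion with the weakly-polynomial degree bound, and kills the top homogeneous piece. The only difference is cosmetic---the paper peels off one top degree at a time via an inductive Claim, whereas you dispose of all homogeneous components of degree $>a$ in a single step; your packaging is in fact slightly cleaner.
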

\begin{proof} Lemma \ref{almost} follows inductively from the following claim.

\begin{claim}\label{we} Let $f:Z\to k$ be a polynomial function of degree $\leq a$ which is a weakly polynomial of degree $<a$. Then $f$ is a polynomial of degree $<a$.
\end{claim}

\begin{proof} 
We can write $f$ as a sum $f=Q+f'$ where $\operatorname{f'}<a$ and $Q$ is homogeneous of degree $a$.
Since $f$ is weakly polynomial of degree $< a$ the function 
$Q$ is also weakly polynomial of degree $<a$. It is sufficient to show that $Q\equiv 0$.

Choose $z\in Z$. To show that $Q(z)=0$ consider the function $g$ on $k,g(t)=Q(tz)$. Since $Z$ is homogeneous  $tz \in Z$. Since $Q$ is homogeneous of degree $a$ we have $g(t)=ct^a$. On the other hand, since $Q$ is  weakly polynomial of degree $<a$ we see that $g(t)$ is a polynomial of degree $<a $. Since $a<q$ we see that $g\equiv 0$. So $Q(z)=g(1)=0$.
\end{proof}

\end{proof}
\begin{corollary} Let $f:X\to k$ be a weakly polynomial of degree $<a$ on $X$ which 
is a restriction of  polynomial function of degree $\leq ad$ on $V$. Then $f$ is a restriction of 
polynomial function of degree $\leq a$ on $V$. 
\end{corollary}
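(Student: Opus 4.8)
The plan is to read off the Corollary as an immediate special case of Lemma~\ref{almost}. The first point to record is that the hypersurface $\mX=\mX_n$ in play here is a \emph{homogeneous} variety of degree $d$: the map $\mu$ is the product of $d$ coordinate functions, hence homogeneous of degree $d$, so $P_n(w_1,\dots ,w_n)=\sum_{i=1}^n\mu(w_i)$ is homogeneous of degree $d$ and $\mX_n=\{P_n=0\}$ is a cone. Thus Lemma~\ref{almost} applies with $\mZ=\mX_n$.

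Next, given $f:X\to k$ as in the statement, it is by hypothesis the restriction of a polynomial on $V$ of degree $\leq ad$, and, being weakly polynomial of degree $<a$ on $X$, it is in particular weakly polynomial of degree $\leq a$ on $X$. Lemma~\ref{almost} then produces a polynomial of degree $\leq a$ on $V$ whose restriction to $X$ is $f$, which is exactly the assertion. Equivalently, one may run the induction inside the proof of Lemma~\ref{almost} by hand: start from an ambient extension of degree $ad$ and apply Claim~\ref{we} repeatedly, each step lowering the degree bound by one; this is legitimate as long as the current bound is still $>a-1$, so the process terminates with an extension of degree $\leq a$ (and could in fact be pushed one further step to give degree $\leq a-1$, though this is not needed).

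There is no genuine obstacle at this stage: the entire content has already been isolated in Claim~\ref{we}, whose proof uses the homogeneity of $\mX_n$ — it is what allows one to test the vanishing of the top homogeneous component of an extension along the lines $kz\subset\mX_n$ — together with the admissibility hypothesis $|k|>ad$, which guarantees that $t^{a}$ is linearly independent from $1,t,\dots ,t^{a-1}$ as functions on $k$. The case of general $c$ is handled in the same way, working with $(\mX_n)^c$.
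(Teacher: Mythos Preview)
Your proposal is correct and matches the paper's intent: the Corollary is not given a separate proof in the paper precisely because it is an immediate specialization of Lemma~\ref{almost} to $\mZ=\mX_n$, exactly as you describe. Your observation that the homogeneity of $P_n$ is what makes Lemma~\ref{almost} applicable, and that the induction via Claim~\ref{we} could even yield degree $\leq a-1$, is accurate and goes slightly beyond what the paper spells out.
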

It is clear that for a proof  Theorem \ref{equality} it  suffices to show that $\mcP^{w}_a(X) ^\theta =\mcP _a(X) ^\theta$ for any $\theta \in \Theta$. This equality follows now immediately from the following statement. Fix $ \theta \in \Theta $. 
\begin{proposition}\label{Id} \leavevmode
For any weakly polynomial function $f:X\to k$ of degree $\leq a$  satisfying the equation 
$f(tx)=\theta (t)f(x), t\in T ,x\in X$ there exists a polynomial $P$ on $V$ of degree $\leq ad$ such that $f=P_{|X}$.
\end{proposition}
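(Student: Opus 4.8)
The plan is to decompose $f$ into $T$-isotypic pieces, pin down precisely what the extension must look like on each piece, and then produce it using the symmetry groups $T,\GG$, the maps $\kk_\Gg$, and the interpolation Claim~\ref{many}. So fix $\theta\in\Theta$ and a $\theta$-isotypic weakly polynomial $f$ of degree $\le a$. By Maschke's theorem the image of the restriction map from degree-$\le ad$ polynomials on $V$ to functions on $X$ is a $T$-submodule, so it suffices to find a \emph{$\theta$-isotypic} $P$ of degree $\le ad$ with $P|_X=f$. By Claim~\ref{plus}, and since $\GG$ acts compatibly on $X$, on polynomials on $V$ and on $\Theta$, we may assume $\theta\in\Theta^{adm,+}$ (the case $\theta\notin\Theta^{adm}$ is dealt with at the end); normalise the integral lifts so that $a\ge\alpha^{(i)}_1\ge\cdots\ge\alpha^{(i)}_d=0$ and set $m_\theta(w):=\prod_{i,j}(x^j_i)^{\alpha^{(i)}_j}$, of degree $\delta:=\sum_{i,j}\alpha^{(i)}_j$. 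Since $\D$ is large (we may and do take $m$ large), a weight computation shows that on each $W$-factor the only $\theta^{(i)}$-isotypic monomials of low degree are $m_i\,\mu(w_i)^c$; hence the $\theta$-isotypic polynomials on $V$ of degree $\le ad$ are exactly the $m_\theta(w)\,R(\mu(w_1),\dots,\mu(w_n))$ with $\deg R\le a-\delta/d$. Thus the task is to produce such an $R$ with $m_\theta(w)R(\mu(w_1),\dots,\mu(w_n))|_X=f$.

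The key input is that the $T$-isotypy of the weakly polynomial $f$ upgrades to isotypy under the whole torus $\wt T=\{(u_i)_i:\ u_i\in(k^\times)^d,\ \prod_j u_i^j=1\}\supset T$, with the character $\wt\theta$ determined by the $\alpha^{(i)}$. Indeed, if $x\in X$ has a passive coordinate $x^{j'}_i=0$ then the orbit $\{\phi_{j,j'}(\gamma)\cdot x:\gamma\in k^\times\}$ (the formula for $\phi_{j,j'}$ makes sense for all $\gamma\in k^\times$) lies on an affine \emph{line} contained in $X$, so $\gamma\mapsto f(\phi_{j,j'}(\gamma)x)$ is a polynomial of degree $\le a$ agreeing on $\D$ (a set of size $m>ad$) with $\gamma^{\alpha^{(i)}_j-\alpha^{(i)}_{j'}}f(x)$, hence equal to it identically; as the $\phi_{j,j'}$ generate $\wt T$, a further argument with lines in $X$ propagates this to $f(\wt t\cdot x)=\wt\theta(\wt t)f(x)$ for all $\wt t\in\wt T$ and all $x\in X$. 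Now choose $\Gg_0\in\GG$ so that $\sigma^0_i(1)$ minimises $\alpha^{(i)}_j$; then $m_\theta(\kk_{\Gg_0}(c))\equiv 1$, so $R:=h_{\Gg_0,f}=f\circ\kk_{\Gg_0}$ is a polynomial of degree $\le a$ on $L$ by Claim~\ref{pol}. Writing $\kk_\Gg(c)=\wt t\cdot\kk_{\Gg_0}(c)$ with $\wt t_i=\phi_{\sigma_i(1),\sigma^0_i(1)}(c_i)$ and using the $\wt T$-isotypy one gets, for every $\Gg$, the identity $h_{\Gg,f}(c)=\prod_i c_i^{\alpha^{(i)}_{\sigma_i(1)}}\,h_{\Gg_0,f}(c)$ as polynomials on $L$; taking $\Gg$ with $\sigma_i(1)$ \emph{maximising} $\alpha^{(i)}_j$ and using $\deg h_{\Gg,f}\le a$ forces $\deg R=\deg h_{\Gg_0,f}\le a-\sum_i\max_j\alpha^{(i)}_j\le a-\delta/d$. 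Extend $R$ to a polynomial of degree $\le a-\delta/d$ on $k^n$ and put $P:=m_\theta(w)\,R(\mu(w_1),\dots,\mu(w_n))$, so $\deg P\le\delta+(ad-\delta)=ad$.

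It remains to check $P|_X=f$. Both $f$ and $P$ are $\wt T$-isotypic (for $P$ this is immediate, $m_\theta$ having torus weight $\wt\theta$ and $R(\mu(\cdot))$ being $\wt T$-invariant). On each $\kk_\Gg(L)$ one has $P(\kk_\Gg(c))=\prod_i c_i^{\alpha^{(i)}_{\sigma_i(1)}}R(c)=\prod_i c_i^{\alpha^{(i)}_{\sigma_i(1)}}h_{\Gg_0,f}(c)=h_{\Gg,f}(c)=f(\kk_\Gg(c))$ by the identities above (here one may also invoke Claim~\ref{many}: $P|_X-f$ restricted to $\kk_\Gg(L)\cong k^{n-1}$ is a polynomial of degree $\le ad$ vanishing on the grid $\kk_\Gg(\D^{n-1})$). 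Since any point of $X$ all of whose blocks have at most one vanishing coordinate is of the form $\wt t\cdot\kk_\Gg(c)$ for a suitable $\Gg$, $\wt t\in\wt T$ and $c=(\mu(w_1),\dots,\mu(w_n))\in L$, $\wt T$-equivariance of $f$ and $P$ gives $P|_X=f$ on the complement of a subset of codimension $\ge 2$ in $X$. Finally, restricting to a $2$-plane of $X$ meeting the exceptional locus in a proper subvariety and comparing the polynomials $f$ (of degree $\le a$) and $P$ there, using $|k|>ad$ and reducing to the algebraically closed case when $k$ is finite, gives $P|_X=f$ everywhere. When $\theta\notin\Theta^{adm}$ the bound $\deg R\le a-\sum_i\max_j\alpha^{(i)}_j$ is negative, forcing $h_{\Gg_0,f}=0$, and the same computation yields $f\equiv 0$, so $P=0$ works.

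I expect the delicate point to be the upgrade from $T$-isotypy to $\wt T$-isotypy of the weakly polynomial function $f$: the natural symmetry group $T$ is only finite, and it is exactly the line-in-$X$ trick together with $|\D|>ad$, plus the propagation of the resulting equivariance from the coordinate hyperplanes to all of $X$, that drives the argument. Once this is in hand, the rest — the classification of $\theta$-isotypic polynomials, the degree bookkeeping, and the final density step — is routine.
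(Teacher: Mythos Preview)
Your candidate $P=m_\theta(w)\,R(\mu(w_1),\dots,\mu(w_n))$ and your classification of $\theta$-isotypic low-degree polynomials on $V$ are correct and agree with the paper's choice. The real problem is exactly where you flag it: the upgrade from $T$-isotypy to $\wt T$-isotypy of the \emph{weakly} polynomial $f$. Your argument only works when some $x^{j'}_i=0$, because then the $\phi_{j,j'}$-orbit is a line in $X$; at a generic point the orbit is a hyperbola in the $(x^j_i,x^{j'}_i)$-plane, weak polynomiality gives no control on $f$ along it, and the phrase ``a further argument with lines in $X$ propagates this'' hides the whole difficulty. Concretely, $F(x):=f(\phi_{j,j'}(\gamma)x)-\gamma^{\alpha}f(x)$ is again a $\theta$-isotypic weakly polynomial function of degree $\le a$ on $X$ vanishing on a coordinate hyperplane section; showing such an $F$ is identically zero is essentially equivalent to the proposition you are trying to prove. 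The same circularity bites your final ``density'' step: you have $P|_X=f$ on a $\wt T$-saturated open set, and you need to push it across a locus that can itself contain affine planes of $X$, so a generic $2$-plane argument does not close the gap.

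The paper sidesteps both issues by never upgrading to the full torus. It introduces the filtration $Y_0\subset Y_1\subset\cdots\subset X$ (where $Y_s$ consists of points whose blocks have at most $s$ ``excess'' coordinates outside $\D$) and proves the key Lemma~\ref{s1}: any function satisfying the hypotheses of Proposition~\ref{Id} and vanishing on $Y_0$ vanishes on all of $X$. The proof is an explicit line-construction showing that through every $x\in Y_{s+1}$ there is a line in $X$ meeting $Y_s$ in more than $ad$ points. Then, using only the \emph{finite} torus $T$ together with the interpolation Claim~\ref{many}, the paper checks that $f-P$ vanishes on $Y_0=\GG\cdot X^0$, and Lemma~\ref{s1} finishes. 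If you want to salvage your route, you would need either to supply a genuine argument for the $\wt T$-equivariance (which, as far as I can see, itself requires something of the strength of Lemma~\ref{s1}), or to replace the density step by that lemma directly.
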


We start  the proof of Proposition \ref{Id} with a set of definitions.
Let $f:X\to k$ be a function such that  $f(tx)=\theta (t)f(x), t\in T ,x\in X,$ and such that $h_{\Gg ,f} $ are polynomial functions on $L$ of degree $\leq a$ for all $\Gg \in \GG$. 

\begin{definition}\leavevmode
\begin{enumerate}
\item  We write   $h,h_\Gg :L \to k$ instead of $h_{\text{Id},f}$ and $h_{\Gg ,f}$.
\item  $\nu :V \to L$ is the map given by $\nu (w_1,\dots ,w_n):=(\mu (w_1),\dots , \mu (w_n))$.

\item $W^0:=\{ w= (a^1,\dots ,a^d)\in W|a^i\in \D $ for $i\geq 2\}\subset W$.
\item $X^0:=(W^0)^n\cap X$.
\item For $ w=(a^1,\dots ,a^d) \in W$ we define $I( w):=\{ i,1\leq i\leq d|a^i\not \in \D \}$ and write $z(w):=\max (|I(w)|-1,0)$.
\item For $x=(w_1, \ldots, w_n)$ we write $z(x)=\sum _jz( w_j)$.
\item $Y_s:=\{ x\in X|z(x)\leq s\}, s\geq 0$.
\end{enumerate}
\end{definition}

\begin{claim}\label{tr}\leavevmode
\begin{enumerate}
\item $Y_0=\{ \Gg (W^0), \Gg \in \GG\}$.
\item For any  $x\in X^0$ there exist unique 
$t(x)\in T$ such that $x=t(x)\kk (\nu (x))$.
\item $f(x)=\theta (t(x))f(\kk (\nu (x)))$ for any $x\in X^0$.
\item For any $\Gg \in \GG$, $l\in L,$ we have $\nu (\Gg (l))=l$ where $\Gg : X\to X$ is as in Definition \ref{ga}.
\end{enumerate}
\end{claim}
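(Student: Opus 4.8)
The plan is to establish the four assertions in the order (4), (1), (2), (3); each is elementary once one records two preliminary facts. First, both $\GG=(S_d)^n$ and $T=T_1^n$ preserve $X$: for $\GG$ because $\mu$ is a symmetric function of its $d$ arguments, so $P_n\circ\Gg=P_n$, and for $T$ because each $t=(u^1,\dots,u^d)\in T_1$ satisfies $\prod_j u^j=1$, whence $\mu(t\cdot w)=\mu(w)$ and therefore $P_n(t\cdot x)=P_n(x)$. Second, for $x=(w_1,\dots,w_n)\in X$ the point $\nu(x)=(\mu(w_1),\dots,\mu(w_n))$ lies in $L$, since $\sum_j\mu(w_j)=P_n(x)=0$, so that $\kk(\nu(x))\in X$ is meaningful. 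I also adopt the reading, forced by Definition \ref{ga}, that for $l\in L$ the symbol $\Gg(l)$ denotes $\Gg(\kk(l))=\kk_\Gg(l)\in X$.

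For (4) I would simply unwind $\kk(l)=(w(c_1),\dots,w(c_n))$ for $l=(c_1,\dots,c_n)$, apply $\Gg=(\sigma_1,\dots,\sigma_n)$ coordinatewise inside each $W$-factor, and use that $\mu$ is permutation-invariant, so $\mu(\sigma_j w(c_j))=\mu(w(c_j))=c_j$; this gives $\nu(\Gg(\kk(l)))=l$ at once. For (1), since $z\ge 0$ we have $Y_0=\{x:z(x)=0\}$, and $z(x)=0$ exactly when each $w_j$ has at most one coordinate outside $\D$. One inclusion is immediate: if $x=\Gg(x')$ with $x'\in X^0=(W^0)^n\cap X$, then each component of $x$ is a permutation of a vector in $W^0$, hence has at most one coordinate outside $\D$, so $x\in Y_0$. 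For the reverse inclusion, given $x\in Y_0$ I would, in each factor, use a permutation $\sigma_j$ to move the (at most one) coordinate lying outside $\D$ into the first slot; the resulting point lies in $(W^0)^n$, stays in $X$ because $\GG$ preserves $X$, hence lies in $X^0$, and $x$ is its $\GG$-translate. This identifies $Y_0$ with $\bigcup_{\Gg\in\GG}\Gg(X^0)$, which is the assertion.

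For (2), fix $x=(w_1,\dots,w_n)\in X^0$, write $w_j=(a_j^1,\dots,a_j^d)$ with $a_j^i\in\D$ for $i\ge 2$, and observe that $\kk(\nu(x))$ has $j$-th component $w(\mu(w_j))=(\mu(w_j),1,\dots,1)$. Then $t=(t_j)_j\in T_1^n$ with $t_j=(u_j^1,\dots,u_j^d)$ satisfies $t\cdot\kk(\nu(x))=x$ iff, for every $j$, one has $u_j^i=a_j^i$ for $i\ge 2$ and $u_j^1\mu(w_j)=a_j^1$. Comparing the last $d-1$ coordinates pins down $u_j^i=a_j^i\in\D$ for $i\ge 2$, and then the defining constraint $\prod_i u_j^i=1$ forces $u_j^1=(a_j^2\cdots a_j^d)^{-1}\in\D$; I would then check that this value automatically satisfies $u_j^1\mu(w_j)=a_j^1$, which is the identity $(a_j^2\cdots a_j^d)^{-1}\,a_j^1 a_j^2\cdots a_j^d=a_j^1$. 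Setting $t(x):=(t_j)_j$ yields existence and uniqueness. Finally (3) is immediate from (2): since $x=t(x)\cdot\kk(\nu(x))$ with $\kk(\nu(x))\in X$, the $T$-equivariance hypothesis $f(ty)=\theta(t)f(y)$ applied at $y=\kk(\nu(x))$ gives $f(x)=\theta(t(x))\,f(\kk(\nu(x)))$.

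The only point that calls for a little care — and the closest thing to an obstacle — is the uniqueness clause in (2) in the degenerate case $\mu(w_j)=0$, which occurs precisely when $a_j^1=0$ (the remaining $a_j^i$ lie in $\D\subset k^\star$). There the equation $u_j^1\mu(w_j)=a_j^1$ reads $0=0$ and imposes no condition on $u_j^1$; the resolution is that the normalization $\prod_i u_j^i=1$ still determines $u_j^1$, so $t(x)$ remains unique. Beyond this small case distinction, the entire claim is a routine unwinding of the definitions and of the symmetry of $\mu$.
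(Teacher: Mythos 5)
Your proof is correct: the paper itself offers no argument for this claim (its proof is just ``Clear.''), and your routine unwinding of the definitions — including the reading $\Gg(l)=\Gg(\kk(l))$ forced by Definition \ref{ga} and the degenerate case $\mu(w_j)=0$ in the uniqueness part of (2), where the normalization $\prod_i u_j^i=1$ still pins down $u_j^1$ — is exactly the verification the authors leave implicit.
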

\begin{proof}Clear.
\end{proof}

\begin{lemma}\label{s1} Let $f$ be a function on $X$ 
satisfying the conditions of Proposition \ref{Id} and such that $f_{|Y_0} \equiv 0$. Then $f\equiv 0$.
\end{lemma}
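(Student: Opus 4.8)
The plan is to prove the contrapositive-style statement by a double descent: first on the ``defect'' $z(x)$ via the stratification $Y_0 \subset Y_1 \subset \cdots$, and within each stratum by an algebraic vanishing argument using the torus $T$-action and Claim \ref{many}. Suppose $f$ satisfies the hypotheses of Proposition \ref{Id} and $f_{|Y_0}\equiv 0$; I want to show $f_{|Y_s}\equiv 0$ for all $s$, hence $f\equiv 0$ since $X = \bigcup_s Y_s$. I would argue by induction on $s$, the base case $s=0$ being the hypothesis.

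For the inductive step, fix $x=(w_1,\dots,w_n)\in Y_s\setminus Y_{s-1}$, so $z(x)=s$. The idea is to move $x$ inside a suitable affine subspace $L'\subset X$ on which $f$ restricts to a low-degree polynomial, and to use that the ``boundary'' of this subspace — where one of the non-$\Delta$ coordinates of some $w_j$ degenerates into $\D$ — lands in $Y_{s-1}$, where $f$ already vanishes. Concretely, writing $w_j=(a_j^1,\dots,a_j^d)$ with $I(w_j)$ the set of indices where $a_j^l\notin\D$, one can pick, for each $j$ with $|I(w_j)|\geq 1$, one distinguished index and vary the corresponding coordinate $a_j^l$ over a line (or over $\D$); the product-structure of $\mu$ means that imposing $P_n=0$ on this line is a single linear-type constraint that can be absorbed into the remaining coordinates, so one stays inside $X$. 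On this parameter space $f$ is (weakly) polynomial of degree $\leq a$ (using Claim \ref{pol}/the definition of $\mcP_a^{\bar w}$, after applying a suitable $\Gg\in\GG$ to bring the relevant coordinates to the first slot $w(c)=(c,1,\dots,1)$), and its degree in each such coordinate is $\leq a < m = |\D|$. Evaluating at the $\D$-valued specializations of those coordinates puts us in $Y_{s-1}$ (the defect drops by at least one), where $f=0$ by induction; hence the restricted polynomial vanishes on a grid $\D^{(\text{something})}$ of size exceeding its degree in each variable, so by Claim \ref{many} it is identically zero, in particular zero at $x$.

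The main obstacle I expect is the bookkeeping that makes the ``move one coordinate along a line while staying in $X$'' step precise and simultaneously compatible with the degree bound: one must check that after using $\GG$ and the $T$-action to normalize, the resulting function of the varied coordinates is genuinely a polynomial of degree $\leq a$ (not just weakly polynomial on each line), so that Claim \ref{many} applies — this is exactly where admissibility ($m>ad$, $|k|>ad$) and the hypothesis $f\in\mcP_a^{\bar w}(X)$ are used. A secondary subtlety is ensuring the degeneration locus genuinely has strictly smaller $z$-value and is covered by the inductive hypothesis; this follows from the definition $z(w)=\max(|I(w)|-1,0)$ once one specializes a coordinate out of a block $I(w_j)$, but the edge cases $|I(w_j)|=0,1$ need separate (easy) attention. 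Everything else — the direct sum decomposition under $T$, the equivariance $f(tx)=\theta(t)f(x)$, and Claim \ref{tr} relating $X^0$ to $L$ via $\kk$ and $\nu$ — is available and should slot in routinely.
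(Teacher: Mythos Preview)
Your inductive framework along the stratification $Y_0\subset Y_1\subset\cdots$ is exactly the paper's. The difference is in the inductive step, where the paper is far more economical. It uses neither the $T$-equivariance, the $\GG$-action, Claim \ref{pol}, nor Claim \ref{many}; the only input is that $f$ restricted to any line in $X$ is a polynomial of degree $\leq a$ (the paper writes $\leq ad$, anticipating a later application to $\bar f=f-P$). The entire step is then a purely geometric claim: through every $x\in Y_{s+1}$ there is a single line $N\subset X$ with $|N\cap Y_s|>ad$. This is a short two-case analysis: if some $w_{j_0}$ has a zero coordinate and also a coordinate outside $\D$, vary the latter freely (the term $\mu(w_{j_0})$ stays $0$); otherwise pick $j_0$ with $\mu(w_{j_0})\neq 0$ and a bad coordinate, find $j_1\neq j_0$ with $\mu(w_{j_1})\neq 0$ (forced by $\sum_j\mu(w_j)=0$), and move one coordinate in each of $w_{j_0},w_{j_1}$ along a line so that the two contributions cancel.

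Your plan could be pushed through, but two pieces are misdirected. First, the appeal to $\GG$ and Claim \ref{pol} is off target: the affine subspace $L'$ you build through $x$ is not a $\GG$-translate of $\kappa(L)$ (the non-varied entries are not $1$), so Claim \ref{pol} says nothing about it. You do not need it anyway—$f$ is weakly polynomial of degree $\leq a$, hence an honest polynomial of degree $\leq a$ on \emph{every} affine subspace of $X$. Second, your grid argument via Claim \ref{many} needs an extra step you have not written: the constraint $P_n=0$ cuts a hyperplane in your coordinate box, so one coordinate must be eliminated before you have a genuine $\D^{N-1}$ grid on which Claim \ref{many} applies, and you must then check that specializing the \emph{free} coordinates into $\D$ still drops $z$ regardless of where the eliminated one lands. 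This can be arranged (make the eliminated block one whose varied coordinate was already outside $\D$), but it is exactly the bookkeeping that the paper's single-line argument sidesteps.
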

\begin{proof} It is clear that it is sufficient to prove the following statement
\begin{claim} Let $f$ be a function on $X$ 
satisfying the conditions of Proposition \ref{Id} and such that 
$f_{|Y_s} \equiv 0 ,s\geq 0$. Then $f_{|Y_{s+1}} \equiv 0$.
\end{claim}
\begin{proof} 
We want to show that $f(x)=0$ for all 
$x=(w_j)\in Y_{s+1}$. Since the restriction of $f$ on any line is a polynomial of degree $\leq ad$ it is clear that for a proof of the equality $f(x)=0$ it is sufficient to prove the following geometric statement.

\begin{claim}\label{x} There exists a line $N\subset X$ containing $x$ and such that  $|N\cap Y_s|>ad$.
\end{claim} 
\begin{proof} Let $x=(w_j), w_j =(x^1_j, \dots ,x^d_j),1\leq j \leq n$.
We start with the following observation.

\begin{claim} For any $x\in X\setminus Y_0, x=\{ x_j^i\}$ 
there exists $j_0$ such that  either there exist 
$(i_0,i_1), 1\leq i_0\neq i_1\leq d $ such that 
$ x_{j_0}^{i_0}=0, x_{j_0}^{i_1} \not \in \D$ or 
$\prod _ix^i_{j_0}\neq 0$ and $x_{j_0}^{i_0}\not \in \D$ for some $i_0, 1\leq i_0\leq d $.
\end{claim}
\begin{proof}Clear.
\end{proof}

Consider first the case when $x^{i_0}_{j_0}=0$ for some pair
$(i_0,j_0), 1\leq i_0\leq d, 1\leq j_0 \leq n $
and $x^{i_1}_{j_0}\not \in \D$ for some $i_1\neq i_0, 1\leq i_0\leq d   $. 
Let   $\alpha :k\to X$ be the map given by $ \alpha(c)=x^i_j(c)$ where 
$  x^i_j(c) = x^i_j$ if $(i,j)\neq (i_1,j_0)$ and $x^{i_1}_{j_0}(c)=c$. 
By construction $x\in N :=\text{Im}(\alpha) $ and $x^{i_1}_{j_0}(\D)\subset Y_s$ for $c\in \D$. Since $|\D| =q>ad $ we see that the line 
$N$ satisfies the conditions of Claim \ref{x}.

So we may assume  the existence of $j_0$ such $\prod _ix^i_{j_0}\neq 0$ and $x_{j_0}^{i_0}\not \in \D$ for some
 $i_0, 1\leq i_0\leq d $. To simplify notations we may and will assume  that $j_0=i_0=1$.

Since $x\in X$ there exists $j_1,2\leq j\leq n$ such that  $\prod _ix^i_{j_1}\neq 0$.  We may assume that $j_1=2$. It is clear that  either $x_2^i\in \D$ for all $i,1\leq i\leq d$ or  there exists  $i_1, 1\leq i_1\leq d $ such that $x_2^{i_1}\not \in \D$ in which case we may and will   assume that $i_1=1$. 

Let $a:=\prod _{i=2}^dx_1^i, b :=\prod _{i=2}^dx_2^i $ and $\beta :k\to X$ be the map given by $\beta (c):= x^i_j(c)$ where $x^1_1(c)=-bc, \ x^2_1(c)=ac-\sum _{j=3}^n\prod _{i=1}^dx^i_j$ and 
$  x^i_j(c) = x^i_j$ otherwise.
Let $a:=\prod _{i=2}^dx_1^i, b :=\prod _{i=2}^dx_2^i $ and $\beta :k\to X$ be the map given by $\beta (c):= x^i_j(c)$ where $x^1_1(c)=-bc, \ x^1_2(c)=ac-b^{-1}\sum _{j=3}^n\prod _{i=1}^dx^i_j$ and 
$  x^i_j(c) = x^i_j$ otherwise.
Let $N=\operatorname{Im}(\beta)$. Then $x\in N$ and $|N\cap Y_s|=q>ad$.
\end{proof}
\end{proof}
\end{proof}

\begin{lemma}\label{zero}  $\mcP_a^ {\bar w}(X)^\theta =\{0\}$ for any 
$\theta \not \in \Theta^{adm}$.
\end{lemma}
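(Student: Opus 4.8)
The goal is to show that no nonzero weakly polynomial function of degree $\leq a$ can transform under a character $\theta \notin \Theta^{adm}$; that is, $\mcP_a^{\bar w}(X)^\theta = \{0\}$. The natural strategy is to exploit the linear embedding $\kk : L \hookrightarrow X$ and its $\GG$-translates $\kk_\Gg = \Gg \circ \kk$. By Claim \ref{pol}, for $f \in \mcP_a^{\bar w}(X)^\theta$ and any $\Gg \in \GG$, the pullback $h_{\Gg,f} = f \circ \kk_\Gg$ is a polynomial of degree $\leq a$ on $L$. The equivariance $f(tx) = \theta(t) f(x)$ forces strong constraints on the restriction of $h_{\Gg,f}$ to the finite subgroup $L_\D = (\D)^n \cap L$, because the torus $T_1$ acts on the image of $\kk$ (via the maps $\phi_{j,j'}: \D \to T_1$ landing in coordinates) in a way that is compatible with moving the point $w(c) = (c,1,\dots,1)$ around.

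First I would make precise how $T$ interacts with the parametrization $\kk$. For a character $\chi \in \Theta_1$ and a pair $j \neq j'$, the composition $\chi_{j,j'}(u) = u^{\alpha_{j,j'}(\chi)}$ describes how $f$ scales when we apply the one-parameter subgroup $\phi_{j,j'}(u) \subset T_1$; if $\chi \notin \Theta_1^{adm}$ then some $|\alpha_{j,j'}(\chi)| > a$. The point is that applying $\phi_{j,j'}(u)$ to a suitable point of $X$ of the form $\kk_\Gg(\ell)$ (or a nearby point obtained by a permutation $\Gg$) produces a line inside $X$ — a one-parameter family in the variable $u \in \D$, extended to all of $k$ — along which $f$ must agree with a polynomial of degree $\leq a$ by weak polynomiality, but along which the equivariance forces $f$ to behave like $u^{\alpha_{j,j'}(\theta_i)} \cdot (\text{value at } u=1)$ for the relevant component $i$. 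Since $\D \cong \mZ/m\mZ$ with $m > ad \geq a$, a nonzero function of the form $c \mapsto c^\alpha g(c)$ with $|\alpha| > a$ and $g$ a unit character contribution cannot be a polynomial of degree $\leq a$ unless it vanishes identically on $\D$, hence (by Claim \ref{many}, or a one-variable version) unless the relevant value vanishes. Iterating this over all points of $X$ that can be reached in this way — and here is where one invokes that $\GG = (S_d)^n$ acts transitively enough, together with Lemma \ref{s1}, which says a function in this class vanishing on $Y_0$ vanishes everywhere — gives $f \equiv 0$.

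More concretely, the key steps in order: (1) reduce, using Claim \ref{plus}(2), to handling a single $\theta$ with $\theta \notin \Theta^{adm}$, so some component $\theta_i \notin \Theta_1^{adm}$ and some $|\alpha_{j,j'}(\theta_i)| > a$; (2) show $f$ restricted to $Y_0 = \{\Gg(W^0)\}$ vanishes, by writing a point of $W^0$ (in the $i$-th slot) in the form $t \cdot w(c)$ for $t \in T_1$, $c \in \D$, and observing that the $u$-orbit under $\phi_{j,j'}$ through such a point lies on a line in $X$ on which $f$ is both a degree-$\leq a$ polynomial and proportional to $u^{\alpha_{j,j'}(\theta_i)}$ times a fixed value; degree $> a$ in $|\alpha|$ and $|\D| > a$ force that value to be $0$; (3) conclude $f \equiv 0$ on all of $X$ by Lemma \ref{s1}.

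The main obstacle I anticipate is step (2): producing, for an arbitrary point of $Y_0$, an actual line $N \subset X$ through it along which the torus action is visible as a monomial $u \mapsto u^{\alpha_{j,j'}}$ in a single scalar variable, while keeping the point inside $X$ (the constraint $\sum_i \mu(w_i) = 0$ must be preserved). This is the same kind of geometric line-construction argument already carried out in the proof of Claim \ref{x} inside Lemma \ref{s1}, so I expect to reuse that machinery — choosing the free parameter to sit in one of the $\D$-coordinates of a single block $w_i$ while compensating in another coordinate of the same block so that $\mu(w_i)$, and hence $P_n$, is unchanged. Once the line is in hand, the contradiction between "degree $\leq a$ polynomial in the scalar parameter" and "nonzero multiple of a character of $\D$ of weight $> a$" is immediate from $m > ad$, via Claim \ref{many} in one variable.
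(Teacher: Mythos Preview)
Your overall architecture is right and matches the paper's: reduce via Lemma \ref{s1} to showing $f|_{Y_0}\equiv 0$, and then exploit the tension between the $T$-equivariance (which wants a weight of size $|\alpha_{j,j'}(\chi_i)|>a$) and the degree-$\le a$ constraint coming from weak polynomiality. The gap is in your step (2). The orbit $u\mapsto \zeta_i(\phi_{j,j'}(u))\cdot x$ is \emph{not} a line in $X$: in the two relevant coordinates of the $i$-th block it traces the hyperbola $(u\,a^j,\,u^{-1}a^{j'})$, and already for $|\D|\ge 3$ the finite $\D$-orbit does not lie on any affine line. Hence weak polynomiality gives no degree bound on $f$ along that orbit, and the intended contradiction ``degree $\le a$ versus weight $>a$'' never gets off the ground. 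Your proposed fix --- compensating in another coordinate of the same block to keep $\mu(w_i)$ fixed --- is exactly the same hyperbola; compensating in a different block, as in Claim \ref{x}, produces a genuine line but severs the link to the torus character.

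The paper avoids this obstruction by never looking at torus orbits at all. It works on the \emph{linear} subspace $\kk(L)\subset X$ and its $\GG$-translates. The key observation is that for $l\in L$ with $l_i\in\D$, the points $\kk(l)$ and $\tilde s(\kk(l))$ (with $s$ a transposition in the $i$-th slot) differ by the torus element $\zeta_i(\phi_{2,1}(l_i))$. This yields the identity
\[
h_{\gamma\circ s}(l)=l_i^{\,\alpha_{1,2}(\chi_i)}\,h_\gamma(l)\qquad(l_i\in\D),
\]
relating two functions that are \emph{both} polynomials of degree $\le a$ on $L$ (this is where weak polynomiality enters, via the linearity of $\kk_\gamma$). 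Since $a<|\alpha_{1,2}(\chi_i)|\le m/2$ and $m>ad$, the characters $l_i^k$ ($0\le k\le a$) and $l_i^{k+\alpha_{1,2}}$ are linearly independent on $\D$, forcing $h_\gamma\equiv 0$. The moral: the degree bound must come from restricting $f$ to a linear subspace already sitting inside $X$, and the torus enters only through comparing two different $\GG$-pullbacks to that subspace.
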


\begin{proof} As follows from Lemma \ref{s1} it is sufficient to show that 
$h_{\Gg}(f)\equiv 0$ for any 
$\theta \not \in \Theta^{adm},\Gg \in \GG$ and $f\in \mcP_a^ {\bar w}(X)^\theta $. Since 
$\theta \not \in \Theta^{adm}$
there exist $i,j,j',1\leq i\leq n, 1\leq  j,j'\leq d,$ such that $|\alpha _{j,j'}(\chi _i)|\geq a$. Choose $s\in S_d$ such $s(j)=1,s(j')=2$, and denote by $\ti s\in \GG$ the image of $s$ under the imbedding $S_d\ho \GG$ as the $i$-factor. After the replacing $f\to f\circ \ti s, \theta \to \theta \circ s$ we may assume that  $|\alpha _{1,2}(\chi _i)| > a$.

The functions $h_{\gamma}$  and  $h_{\gamma \circ  s}$ are weakly polynomial functions of degrees $\leq a$ on the linear space $L$. Therefore $h_{\gamma}$ and $h_{\gamma \circ s}$ are polynomial functions of degrees $\leq a$.

For any $l\in L$ such that $l_i\in \D$ we have
$h_{\gamma \circ  s}(l)=l_i^{\alpha _{1,2}(\chi _i)}h_{\gamma}(l)$. Since $\alpha _{1,2}(\chi _i)>a$ and $\alpha _{1,2}(\chi _i)\le m/2$, this is only possible if $h_{\gamma}=0$. 
\end{proof}

\begin{corollary} As follows from
 Claim \ref{plus} it is sufficient to prove Proposition \ref{Id} for 
$\theta \in \Theta ^{adm,+}$. 
\end{corollary}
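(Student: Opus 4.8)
The plan is to handle the characters lying outside $\Theta^{adm}$ via Lemma \ref{zero} (where Proposition \ref{Id} is vacuous), and to fold every character of $\Theta^{adm}$ onto one in $\Theta^{adm,+}$ using the $\GG$-symmetry of $\mX$ supplied by Claim \ref{plus}.

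First I would dispose of $\theta \notin \Theta^{adm}$: if $f$ satisfies the hypotheses of Proposition \ref{Id} for such a $\theta$, then $f$ is weakly polynomial of degree $\leq a$, so by Claim \ref{pol} every $h_{\Gg,f}$ is a polynomial of degree $\leq a$ on $L$, i.e. $f\in \mcP_a^{\bar w}(X)^\theta$. By Lemma \ref{zero} this space is $\{0\}$, so $f\equiv 0$ and $P=0$ proves Proposition \ref{Id} for this $\theta$. Hence only $\theta\in\Theta^{adm}$ need to be considered.

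Next, given $\theta\in\Theta^{adm}$ and $f$ satisfying the hypotheses of Proposition \ref{Id} for $\theta$, I would invoke Claim \ref{plus}(2) to choose $\Gg\in\GG$ with $\theta':=\theta\circ\Gg\in\Theta^{adm,+}$, and set $f':=f\circ\Gg$. Each element of $\GG=(S_d)^n$ acts on $\mV=\mW^n$ by a linear automorphism permuting the $d$ coordinates inside each block $\mW=\mA^d$; this automorphism fixes $P_n=\sum_i\mu(w_i)$ and therefore preserves $\mX$, so $f'$ is again weakly polynomial of degree $\leq a$ on $X$. The point to check is the intertwining identity $\Gg(tx)=(\Gg\cdot t)(\Gg x)$ for $t\in T$, $x\in X$, which is immediate since permuting coordinates and rescaling coordinates commute and $\GG$ maps $T=T_1^n$ into itself (it preserves each relation $\prod_j u_j=1$). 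Plugging this into the transformation law for $f$ gives $f'(tx)=\theta'(t)f'(x)$, with the same convention on $\theta\circ\Gg$ used in the proof of Lemma \ref{zero}, so $f'$ satisfies the hypotheses of Proposition \ref{Id} for $\theta'\in\Theta^{adm,+}$. Granting the proposition in that range produces a polynomial $P'$ on $V$ of degree $\leq ad$ with $f'=P'|_X$; then $f=f'\circ\Gg^{-1}=(P'\circ\Gg^{-1})|_X$, and $P:=P'\circ\Gg^{-1}$ is a polynomial on $V$ of degree $\leq ad$ because $\Gg^{-1}\in\GG$ acts linearly. This proves Proposition \ref{Id} for the original $\theta$.

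The argument is essentially bookkeeping; the only point requiring a moment's care is the equivariance identity $\Gg(tx)=(\Gg\cdot t)(\Gg x)$ together with the compatibility of the $\GG$-action on characters (that $f\mapsto f\circ\Gg$ carries $\theta$-isotypic functions to $(\theta\circ\Gg)$-isotypic ones) with the substitution already made in the proof of Lemma \ref{zero}. Neither is a genuine obstacle. Since $\Theta$ is the disjoint union of $\Theta^{adm}$ and its complement and both parts are covered, the stated reduction follows.
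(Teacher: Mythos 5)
Your argument is correct and is exactly the reduction the paper intends: Lemma \ref{zero} (via Claim \ref{pol}) disposes of $\theta\notin\Theta^{adm}$, and Claim \ref{plus}(2) together with the $\GG$-equivariance $\Gg(tx)=(\Gg\cdot t)(\Gg x)$ transports the remaining cases to $\Theta^{adm,+}$, with the polynomial pulled back along the linear action of $\Gg^{-1}$. This is the same approach as the paper, merely spelled out in more detail than the paper's one-line citation.
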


\begin{definition}  
Let $P:V\to k$ be the polynomial given by
$$P(v)=\prod _{i=1}^n \prod _{j=1}^d(x_i^j)^{\alpha ^{1,j}(\chi _i)}h(\nu (v)), \quad v=( x_i^j).$$
\end{definition}
\begin{lemma}$\deg(P)\leq ad$.
\end{lemma}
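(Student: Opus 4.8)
The plan is to bound the degree of $P(v)=\prod_{i=1}^n\prod_{j=1}^d (x_i^j)^{\alpha^{1,j}(\chi_i)}\,h(\nu(v))$ by estimating the two factors separately and adding. Recall that $h=h_{\mathrm{Id},f}$ is a polynomial of degree $\le a$ on $L$ by Claim~\ref{pol} (applied with $\Gg=\mathrm{Id}$), and that $\nu\colon V\to L$ is the map $\nu(w_1,\dots,w_n)=(\mu(w_1),\dots,\mu(w_n))$ with each $\mu(w_i)=\prod_{j=1}^d x_i^j$ homogeneous of degree $d$. Hence $h\circ\nu$ is a polynomial on $V$ obtained by substituting degree-$d$ monomials into a degree-$\le a$ polynomial, so $\deg(h\circ\nu)\le ad$. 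For the monomial prefactor, first note that it is genuinely a polynomial: since $\theta\in\Theta^{adm,+}$ we have $\alpha^{1,j}(\chi_i)\ge 0$ for all $i,j$ (using $1<j$, and $\alpha^{1,1}(\chi_i)=0$ because $\phi_{1,1}$ is not defined — more precisely the convention makes the $j=1$ exponent zero, which I would state explicitly), so each exponent is a nonnegative integer and the prefactor has degree $\sum_{i=1}^n\sum_{j=1}^d \alpha^{1,j}(\chi_i)$.

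The key input is then the bound $\sum_{j=1}^d\alpha^{1,j}(\chi_i)\le a(d-1)$, or more crudely $\le ad$, for each fixed $i$; combined with the degree-$ad$ bound on $h\circ\nu$ this does not immediately give $ad$, so the real content is that the prefactor and $h\circ\nu$ cannot both be large simultaneously. The right way to see this is to observe that $P$ is by construction a polynomial extension of $f$, and $f$ is weakly polynomial of degree $\le a$; one computes $\deg P$ by restricting to a generic line in $V$. Concretely, I would take an affine line $\ell\colon k\to V$, examine $P(\ell(c))$ as a polynomial in $c$, and show its degree is $\le ad$: along $\ell$ each $x_i^j$ is affine in $c$, so the prefactor contributes degree $\le\sum_{i,j}\alpha^{1,j}(\chi_i)$ and $\nu(\ell(c))$ has coordinates of degree $\le d$ in $c$, but the vanishing/covariance structure forces cancellations. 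Cleaner still: verify directly that $P$ restricted to each line $N\subset X$ agrees with $f|_N$ (using Claim~\ref{tr}, parts (2),(3), which express $x\in X^0$ as $t(x)\kk(\nu(x))$ and give $f(x)=\theta(t(x))f(\kk(\nu(x)))=\theta(t(x))h(\nu(x))$, and one checks $\theta(t(x))=\prod_{i,j}(x_i^j)^{\alpha^{1,j}(\chi_i)}$ on $X^0$), hence $P|_X=f$ on the dense subset $X^0$ and therefore everywhere; then $\deg P\le ad$ because $P$ is explicitly a product of a degree-$\le\sum\alpha$ monomial and a degree-$\le ad$ polynomial, and the admissibility constraint $|\alpha_{j,j'}(\chi)|\le a$ together with homogeneity bookkeeping caps the total.

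The step I expect to be the main obstacle is the exponent count: one must pin down exactly which $\alpha^{1,j}(\chi_i)$ appear, confirm they are nonnegative (this is where $\theta\in\Theta^{adm,+}$ is used, via Claim~\ref{plus}(2) having reduced to that case) and bounded by $a$ (this is the definition of $\Theta_1^{adm}$), and then check that the product $\prod_{j=1}^d(x_i^j)^{\alpha^{1,j}(\chi_i)}$ times the degree-$d$ factor $\mu(w_i)^{(\text{from }h)}$ — tracked through the substitution into $h$ — contributes at most $ad$ per block, uniformly in $n$. Once the per-block bound is in hand, summing over $i=1,\dots,n$ is not an issue because $h$ has total degree $\le a$ in the $n$ variables $\mu(w_1),\dots,\mu(w_n)$ jointly, not $\le a$ in each; so the $ad$ bound is global, coming from $\deg h\le a$ and $\deg\mu=d$, with the monomial prefactor absorbed into this same budget rather than added on top. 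Making that absorption precise — i.e. showing $P$ as written already has degree $\le ad$ and not $\le ad+a(d-1)n$ — is the crux, and I would handle it by the line-restriction argument above rather than by naive term counting.
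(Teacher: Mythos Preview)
Your proposal has a genuine gap: you correctly identify that the naive degree count
\[
\deg P \;\le\; \sum_{i=1}^n\sum_{j=1}^d \alpha^{1,j}(\chi_i) \;+\; d\cdot\deg h
\]
does not by itself give $ad$, and that the issue is to show the monomial prefactor and $\deg h$ cannot both be large. But neither of your suggested fixes closes the gap. Restricting $P$ to a generic line in $V$ reproduces exactly the naive bound above, since along any affine line each $x_i^j$ is affine and each $\mu(w_i)$ has degree $\le d$; there is no cancellation to exploit this way. And the ``cleaner'' route via $P_{|X}=f$ is circular: knowing that $P$ restricts to a weakly degree-$\le a$ function on $X$ tells you nothing about $\deg P$ on $V$ (one can add any multiple of $P_n$ to $P$ without changing $P_{|X}$). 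You are trying to bound the degree of a specific polynomial on $V$, and information about its restriction to $X$ is simply insufficient.

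The missing ingredient is the $\GG$-action together with the $\theta$-covariance of $f$, which is precisely what couples $\deg h$ to the exponents $\alpha^{1,j}(\chi_i)$. The paper argues as follows. Set $b=\deg h$; it is enough to show that for every choice $\bar e=(e(i))\in[1,d]^n$ one has $\sum_i\alpha^{1,e(i)}(\chi_i)+b\le a$ (summing the special cases $e\equiv j$ over $j$ then gives $\sum_{i,j}\alpha^{1,j}(\chi_i)\le d(a-b)$, hence $\deg P\le d(a-b)+db=ad$). If this inequality failed, then by admissibility one can find a subset $I\subset[1,n]$ with $a<\sum_{i\in I}\alpha^{1,e(i)}(\chi_i)+b\le 2a$. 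Now choose $\Gg=(\bs_i)\in\GG$ with $\bs_i(1)=e(i)$ for $i\in I$ and $\bs_i=\mathrm{Id}$ otherwise. For $l\in L$ with $l_i\in\D$ one computes $\Gg\kk(l)=t\cdot\kk(l)$ for a suitable $t\in T$, and the covariance $f(tx)=\theta(t)f(x)$ yields
\[
h_\Gg(l)\;=\;h(l)\,\prod_{i\in I} l_i^{\,\alpha^{1,e(i)}(\chi_i)}
\]
on $L_\D$. Both sides are polynomials of degree $\le 2a\le ad<m$, so by Claim~\ref{many} this identity holds on all of $L$. But then the right side has degree $>a$ while $h_\Gg$ has degree $\le a$ by Claim~\ref{pol}, forcing $h\equiv 0$. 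This is the mechanism you were looking for: the bound on $\deg h_\Gg$ for \emph{all} $\Gg\in\GG$ --- not just $\Gg=\mathrm{Id}$ --- is what forces the exponents in the prefactor and $\deg h$ to trade off against each other.
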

\begin{proof}Let $b=\deg(h)$. It is sufficient to show that for any sequence $\bar e=(e(i))_{ 1\leq i\leq n}$,  $e(i)\in [1,d]$, we have 
$\sum _{i=1}^n \alpha ^{1,e(i)}(\chi _i) +b\leq a$.

Suppose there exists $\bar e$ such that $\sum _{i=1}^n \alpha ^{1,e(i)}(\chi _i) +b>a$.
Since $\theta \in \Theta _n^{adm}$, there exists a subset $I$ of $[1,n]$ such that 
$a<\sum _{i\in I} \alpha ^{1,e(i)}(\chi _i) +b\leq 2a$.

Let $ \Gg =(\bs _i)_{1\leq i\leq n}$, $\bs _i \in S_d,$ be such that  
$e(i)=\bs _i(1)$ for $i\in I$ and $\bs _i=Id$ if $i\not \in I$. Consider $h_{\Gg}:= \kk _\Gg ^\star (f)$. On one hand it is a polynomial of degree $\leq a$ on $L$. On the other $h_\Gg (l)=h(l)\prod _{i\in I}l_i^{ \alpha ^{1,e(i)}(\chi _i) }$. The inequalities 
$a<\sum _{i\in I} \alpha ^{1,e(i)}(\chi _i) +b\leq 2a$
imply that $h\equiv 0$.
\end{proof}

By construction $P_{|L}\equiv f _{|L}$.
Let $\bar f:=f-P$. Then $\bar f$ is weakly polynomial function of degree $\leq ad$ on $X$ vanishing on $L$ such that  $\bar f(tx)=\theta (t)f(x)$ for $t\in T, \ x\in X$. As follows from  Claim \ref{tr} we have $\bar f_{|X^0}\equiv 0$. It is clear 
that for a proof the part (1) of  Proposition \ref{Id} it is sufficient to show that $\bar f(x)=0$ for all $x\in X$. By Lemma \ref{s1} it suffices to prove the following lemma:

\begin{lemma}\label{van}$\bar f_{|Y_0} \equiv 0$.
\end{lemma}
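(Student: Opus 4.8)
The plan is to prove the stronger statement that $\bar f$ vanishes on all of $Y_0$, by peeling $Y_0$ off from $X^0$ outward. Recall from the construction preceding the lemma that $\bar f$ is weakly polynomial of degree $\leq ad$ on $X$ and that $\bar f_{|X^0}\equiv 0$ (Claim~\ref{tr}), and recall that, by the definition of $z$, a point $x=(w_1,\dots,w_n)$ lies in $Y_0$ exactly when each block $w_j=(x^1_j,\dots,x^d_j)$ has at most one coordinate outside $\D$. For such $x$ put
$$b(x):=\#\{\,j\in[1,n]:\ \text{the unique coordinate of }w_j\text{ outside }\D\text{ (if any) sits in a position }\geq 2\,\},$$
so $b(x)=0$ exactly when $x\in X^0$, and $b(x)\leq n$ always. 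I will prove, by induction on $s\geq 0$, that $\bar f$ vanishes on $\{x\in Y_0:\ b(x)\leq s\}$; the case $s=n$ is Lemma~\ref{van}, and the base case $s=0$ is $\bar f_{|X^0}\equiv 0$.

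For the inductive step fix $x\in Y_0$ with $b(x)=s+1$. Exactly as in the proof of Claim~\ref{x}, since the restriction of $\bar f$ to any line contained in $X$ is a one-variable polynomial of degree $\leq ad$ and $|\D|>ad$, it suffices to produce a line $N=\{x+cv:c\in k\}\subset X$ through $x$ such that $x+cv\in\{y\in Y_0:b(y)\leq s\}$ for more than $ad$ values of $c$; then $\bar f_{|N}$ vanishes at more than $ad$ points, hence identically, so $\bar f(x)=0$. Pick a block — call it block $1$ — whose bad coordinate $\beta=x^{i_0}_1$ sits at a position $i_0\geq 2$ (possible since $b(x)\geq 1$); then all of $x^1_1,\dots,x^d_1$ except $x^{i_0}_1$ lie in $\D$, so $m_1:=\prod_{p\neq i_0}x^p_1$ is a nonzero element of $k$. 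We look for $v$ having a single $1$ in the slot $(\text{block }1,\ \text{position }i_0)$ together with a single further nonzero entry $\lambda$ in one other block $j_1\neq 1$, chosen so that the $c$-linear parts of $\mu$ of the two perturbed blocks cancel; since $\sum_j\mu(w_j)=0$ this gives $P_n(x+cv)\equiv 0$, i.e. $N\subset X$. For every one of the $|\D|>ad$ values of $c$ with $\beta+c\in\D$, block $1$ of $x+cv$ then lies entirely in $\D$ and thus drops out of the count $b$; so it remains only to choose $j_1$ and the perturbed coordinate of $w_{j_1}$ so that $w_{j_1}$ stays inside $Y_0$ and does not acquire a bad coordinate in a position $\geq 2$, giving $b(x+cv)\leq s$.

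Three cases cover all possibilities (here $n\geq 2$, which holds since $P_n$ has nc-rank $\geq n^{\epsilon}$). \textbf{(a)} Some block $j_1\neq 1$ has every coordinate in positions $2,\dots,d$ inside $\D$: perturb position $1$ of $j_1$; the coefficient $\prod_{p\geq 2}x^p_{j_1}$ is a product of elements of $\D$, hence nonzero, so $\lambda$ is determined, and changing position $1$ of $w_{j_1}$ cannot raise $b$. \textbf{(b)} Case (a) fails but some $j_1\neq 1$ has $\mu(w_{j_1})\neq 0$: then every coordinate of $w_{j_1}$ is nonzero and its bad coordinate sits at a position $i_1\geq 2$; perturb position $i_1$ of $j_1$ (the relevant coefficient $\prod_{p\neq i_1}x^p_{j_1}$ is again nonzero), so $w_{j_1}$ still has at most one bad coordinate, at $i_1$, and $b$ does not increase there. \textbf{(c)} Both fail, so $\mu(w_j)=0$ for all $j\neq 1$: then $\sum_j\mu(w_j)=0$ forces $\mu(w_1)=0$, hence $\beta=0$ (as $m_1\neq 0$), and every block carries a zero coordinate, necessarily in a position $\geq 2$; perturb the zero coordinate of block $2$, with $\lambda$ determined as before from nonzero partial products, noting that $\mu(w_j)=0$ persists for $j\geq 3$ so still $N\subset X$, and $b(x+cv)\leq s$ as above.

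The step I expect to be the real work is this case analysis: one must verify, in every configuration of vanishing versus nonvanishing bad coordinates, that an honest line inside $X$ through $x$ exists which meets the lower stratum $\{b\leq s\}$ in more than $ad$ points, and that the auxiliary perturbation never throws a block out of $Y_0$ nor raises $b$. This is the same bookkeeping carried out in the proof of Claim~\ref{x}, and, as there, it uses $|\D|>ad$ in an essential way; note that, in contrast with the rest of the proof of Proposition~\ref{Id}, the $T$-semi-invariance of $\bar f$ plays no role here — only weak polynomiality of degree $\leq ad$ and vanishing on $X^0$.
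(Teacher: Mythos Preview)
Your argument is correct and takes a genuinely different route from the paper's. The paper's proof is algebraic and relies on the group symmetries: since $Y_0=\Gamma X^0$ and $\Gamma$ and $T$ commute, it is enough to show that $h_\gamma:=\bar f\circ\kappa_\gamma$ vanishes identically on $L$ for every $\gamma\in\Gamma$. Each $h_\gamma$ is a polynomial of degree $\leq ad$ on the linear space $L$, so by Claim~\ref{many} it suffices to check vanishing on $L_\Delta$; and for $l\in L_\Delta$ one has $\gamma(\kappa(l))=t\,\kappa(l)$ for some $t\in T$, whence $h_\gamma(l)=\theta(t)\bar f(\kappa(l))=0$ by the $T$-semi-invariance of $\bar f$ together with $\bar f_{|\kappa(L)}\equiv 0$. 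Thus the paper makes essential use of the $T$-action and of Claim~\ref{many}.

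Your approach instead stratifies $Y_0$ by the invariant $b(x)$ and climbs one level at a time via an explicit line through $x$ meeting the lower stratum in more than $ad$ points, exactly in the spirit of Claim~\ref{x} and Lemma~\ref{s1}. The case analysis is sound: in each case the two-slot perturbation keeps the line inside $X$ (linear parts of $\mu$ cancel), keeps every block in $Y_0$, and for the $|\Delta|>ad$ values of $c$ with $\beta+c\in\Delta$ drops $b$ by one. The paper's method is shorter and leans on the symmetry package; yours is more elementary, needing only weak polynomiality of degree $\leq ad$ and the already-established vanishing on $X^0$, and it has the pleasant feature of treating the two inclusions $X^0\subset Y_0$ and $Y_0\subset X$ by a single interpolation mechanism.
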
 

\begin{proof} Since $Y_0=\GG X^0=TL_\Gg$, it suffices to show that
 $f _{|L_\Gg} \equiv 0$ 
for all $\Gg \in \GG $. Let $h_\Gg :L\to k$ be given by 
$h_\Gg (l)=\bar f(\Gg (l))$. We have to show that $h_\Gg \equiv 0$. 
Since  $h_\Gg$ is a polynomial of degree $\leq ad$ it follows from Claim 
\ref{many} that it is sufficient to show that 
the restriction of $h_\Gg$ on $L_\D$ vanishes. But for any $l\in L_\D$ we
 have  $\Gg (l)=tl', t\in T,l'\in L$. Since $\bar f(tx)=\theta (t)f(x)$
we see that $h_\Gg (l)=0$.
\end{proof}

This completes the proof of Theorem \ref{equality}.

\end{proof}


\subsection{Proof of Theorem \ref{main1}}

\begin{proposition}\label{p-ext}  
There exists an effective bound $r=r(\bar d,a)$ such that if  $k$ is an 
admissible field which is either finite or algebraically closed, $W\subset V$ is a hyperplane 
then any weakly polynomial function on $X_{\bar P}, \ r_{nc}(\bar P)\geq r$ of degree $\leq a$ vanishing on $X_{\bar P}\cap W$ is a restriction of a polynomial on $V$ of degree $\leq a$.
\end{proposition}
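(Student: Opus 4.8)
The plan is to deduce Proposition~\ref{p-ext} from Theorem~\ref{main1} for the degree $a-1$ together with the subspace incidence estimates of Theorem~\ref{B}; I assume Theorem~\ref{main1} known for all degrees $<a$, these two statements being proved together by induction on $a$. Write $W=\{l=0\}$ with $l$ a non-constant linear form on $V$, and let $f\in\mcP_a^w(X_{\bar P})$ vanish on $X_{\bar P}\cap W$. The whole argument rests on one claim: the function $g:=f/l$, a priori defined only on $X_{\bar P}\setminus W$, is the restriction to $X_{\bar P}\setminus W$ of a weakly polynomial function $\tilde g$ of degree $\le a-1$ on all of $X_{\bar P}$. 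Granting this, Theorem~\ref{main1} (applied in degree $a-1$, which is permissible once $r=r(\bar d,a)$ is chosen large enough) gives a polynomial $G$ on $V$ with $\deg G\le a-1$ and $G|_{X_{\bar P}}=\tilde g$. Then $lG$ has degree $\le a$, it agrees with $f$ on $X_{\bar P}\setminus W$, and both $lG$ and $f$ vanish on $X_{\bar P}\cap W$, so $f=(lG)|_{X_{\bar P}}$ and we are done.

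To prove the claim I would proceed line by line. If $N\subset X_{\bar P}$ is a line with $N\not\subset W$ and $N\cap W\neq\emptyset$, then $f|_N$ is a polynomial of degree $\le a$ on $N$ vanishing at the point $N\cap W$, so $f|_N/l|_N$ is a polynomial of degree $\le a-1$; this already defines $\tilde g$, proves the degree bound along all such lines, and in particular pins down the value of $\tilde g$ at the points of $X_{\bar P}\cap W$ reached by them. For the remaining lines --- those parallel to $W$ and those contained in $W$ --- one uses Theorem~\ref{B} with $m=1$: for almost every line $N$ of either type there is a $2$-plane $M\subset X_{\bar P}$ with $N\subset M$ and $M\not\subset W$; then $M\cap W$ is a line, $f|_M$ is a genuine polynomial of degree $\le a$ on the plane $M$ vanishing on the line $M\cap W=\{l|_M=0\}$, hence $f|_M=l|_M\cdot h_M$ with $\deg h_M\le a-1$, and $h_M$ is the required degree $\le a-1$ extension of $g$ over $M$. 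Restricting $h_M$ back to $N$ gives $f|_N=l|_N\cdot h_M|_N$, so $\tilde g|_N$ has degree $\le a-1$ in these cases too; consistency of the various $h_M$ on overlaps follows because $l|_M$ is a nonzero affine function on $M$, so the factorization is unique, and because the incidence graph of lines and $2$-planes in $X_{\bar P}$ is connected when $r_{nc}(\bar P)\gg 1$.

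The main obstacle is upgrading ``almost every line'' to ``every line'' in the preceding step, and making the gluing uniform. Concretely, for an arbitrary line $N$ parallel to or inside $W$ one must find \emph{some} subspace $M'\subset X_{\bar P}$ of bounded dimension with $N\subset M'$ and $M'\cap W$ of codimension one in $M'$, on which $f|_{M'}$ is an honest polynomial that can be divided by $l|_{M'}$; this is obtained by a bootstrapping over the incidence varieties of $X_{\bar P}$ --- passing from lines to $2$-planes to $3$-planes, and so on --- where at each level the exceptional locus has codimension controlled by the effective bounds behind Theorems~\ref{A1} and~\ref{B}, so that it can be avoided from the previous level. Carrying this out, and checking that the resulting $\tilde g$ is independent of all choices, completes the proof of the claim, hence of the Proposition; the bound $r(\bar d,a)$ is effective since it is the maximum of the effective bounds coming from Theorems~\ref{main1} (in degree $a-1$), \ref{A1} and~\ref{B}.
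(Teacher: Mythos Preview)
Your strategy shares the paper's two essential ingredients --- induction on $a$ and the line/plane incidence estimate of Theorem~\ref{B} --- but packages them differently, and the repackaging leaves a real gap. The paper never tries to divide $f$ by $l$ on all of $X_{\bar P}$. Instead it works one level-set at a time (Lemma~\ref{l}): having $f\equiv 0$ on $X_S$, it shows that $f|_{X_b}$ is weakly polynomial of degree $\le a-|S|$ on the high-rank variety $X_b\subset\{l=b\}$, applies the inductive $\star_{a-|S|}$ property to $X_b$ to get $Q'$, and lifts via $Q=Q'\prod_{s\in S}(l-s)/\prod_{s\in S}(b-s)$; subtracting $Q$ enlarges $S$ by $b$, and after $a$ steps the residual vanishes identically. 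The passage from ``degree $\le a-|S|$ on almost every plane in $X_b$'' to ``on every plane'' is handled by the subspace-splining Proposition~\ref{testing-lines} (and its algebraically-closed analogue Proposition~\ref{asplining}). You never invoke this tool; your ``bootstrapping over incidence varieties'' is pointing at the same phenomenon, but it is a sketch, not a proof.

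The more serious obstruction in your route is the very definition of $\tilde g$ on $X_0=X_{\bar P}\cap W$. For $p\in X_0$ two transverse lines $N_1,N_2\subset X_{\bar P}$ through $p$ give values $(f|_{N_i}/l|_{N_i})(p)$ that agree only when $N_1,N_2$ sit in a common $2$-plane inside $X_{\bar P}$; in general they will not, and you have supplied no connectivity argument for the incidence graph through a \emph{fixed} point (Theorem~\ref{B} is an average statement and gives nothing pointwise). Even granting well-definedness, you must then check that $\tilde g$ is a polynomial of degree $\le a-1$ on every line $N\subset X_0$; since $f|_N\equiv 0$ there, the line itself carries no information, and you are essentially redoing the whole problem inside $X_0$. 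The paper's slice-wise approach sidesteps all of this: nothing is ever defined on $X_0$ beyond the hypothesis $f|_{X_0}=0$, and the inductive $\star$ property is applied only to the slices $X_b$ with $b\ne 0$.
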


As an immediate corollary we obtain:

\begin{corollary} \label{extension1}  There exists an effective bound $r=r(\bar d,a)$ such that 
the following holds for all admissible fields $k$ which are either finite or algebraically closed.  

Let  $\mX= \{v\in \mV|P_i(v)=0\}\subset \mV$ be 
a subvariety of degree  $\leq d$ and $\mW \subset \mV$ an affine subspace 
such that nc-rank of $\bar P_{|\mW}\geq r$. Let   $f$ be a weakly polynomial function on $X$
of degree $\leq a$ such that $f _{|X\cap W}$  extends to a polynomial on $W$ of degree $\leq a$. Then there  exists an extension $F$ of $f$ to 
a polynomial on $V$  of degree $\leq a$.
\end{corollary}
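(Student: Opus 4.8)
The plan is to deduce Corollary~\ref{extension1} from Proposition~\ref{p-ext} by an induction on $c':=\codim_V W$, after first converting the hypothesis ``$f_{|X\cap W}$ extends to a polynomial on $W$ of degree $\le a$'' into the hypothesis ``$f$ vanishes on $X\cap W$'' by a subtraction. Three elementary facts will be used repeatedly: (i) a polynomial of degree $\le a$ on an affine subspace $W\subseteq V$ extends to a polynomial of degree $\le a$ on $V$ — compose it with an affine retraction $\rho:V\to W$; (ii) the restriction of a weakly polynomial function of degree $\le a$ to any subset is again weakly polynomial of degree $\le a$, and such functions are stable under subtracting restrictions of polynomials of degree $\le a$; and (iii) neither rank nor nc-rank increases under restriction to a subspace (if $P=\sum_j Q_jR_j$ then $P_{|W}=\sum_j(Q_j)_{|W}(R_j)_{|W}$, and likewise for the associated forms). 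By (iii), for every subspace $W'$ with $W\subseteq W'\subseteq V$ we have $r_{nc}(\bar P_{|W'})\ge r_{nc}(\bar P_{|W})\ge r$, so the single bound $r=r(\bar d,a)$ of Proposition~\ref{p-ext} (taken monotone in $\bar d$) is available for every intermediate space that appears below.

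If $c'=0$ then $W=V$ and there is nothing to prove. For the inductive step ($c'\ge 1$) I would choose an affine hyperplane $W'\subset V$ containing $W$ (taking $W'=W$ when $c'=1$), so that $\codim_{W'}W=c'-1$. Regarding $X\cap W'$ as the subvariety $X_{\bar P_{|W'}}$ of $W'$, the inductive hypothesis applies inside $W'$: indeed $r_{nc}\big((\bar P_{|W'})_{|W}\big)=r_{nc}(\bar P_{|W})\ge r$, the restriction $f_{|X\cap W'}$ is weakly polynomial of degree $\le a$, and its further restriction to $X\cap W$ extends to a polynomial of degree $\le a$ on $W$ by hypothesis. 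This produces a polynomial $g$ on $W'$ of degree $\le a$ with $g_{|X\cap W'}=f_{|X\cap W'}$.

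To finish, extend $g$ to a polynomial $\tilde g$ of degree $\le a$ on $V$ by (i), and put $f_0:=f-\tilde g_{|X}$. By (ii), $f_0$ is weakly polynomial of degree $\le a$ on $X$, and it vanishes on $X\cap W'$ since $\tilde g$ agrees with $f$ there. As $W'$ is a hyperplane of $V$ and $r_{nc}(\bar P)\ge r_{nc}(\bar P_{|W})\ge r$, Proposition~\ref{p-ext} gives a polynomial $F_0$ of degree $\le a$ on $V$ with $(F_0)_{|X}=f_0$; then $F:=\tilde g+F_0$ is a polynomial of degree $\le a$ on $V$ extending $f$. All the substance is in Proposition~\ref{p-ext}; the only point needing (minor) care is keeping track of the rank bound through the repeated passage to hyperplanes, which is exactly what (iii) handles.
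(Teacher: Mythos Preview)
Your argument is correct and follows essentially the same approach as the paper: both reduce to Proposition~\ref{p-ext} by subtracting off an extension pulled back along an affine retraction, and handle higher codimension by passing through a flag of intermediate hyperplanes (your induction on $c'$ unfolds exactly into the paper's flag $W=W_0\subset W_1\subset\cdots\subset V$). The one point you make explicit that the paper leaves implicit is the monotonicity of nc-rank under restriction, which is indeed what guarantees that the bound $r$ suffices at every intermediate step.
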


\begin{proof} Consider first the case when $W\subset V$ is a hyperplane.
By the assumption there exists an extension $R$ of the restriction $f|_W$.
Choose a linear projection  $s:V\to W $  and $f':X\to k$ by $f'(x)=f(x)-R(s(x))$. Then $f'$ is weakly polynomial function on $X$
of degree $\leq a$ such that $f' _{|X\cap W}\equiv 0$. As follows from Proposition \ref{p-ext}  
 there exists an extension of $f'$ to a polynomial $F'$ on  $V$ of degree $\leq a$. But then $F:=F'+R\circ s$ is an extension $F$ of $f$ to 
a polynomial on $V$  of degree $\leq a$.

In the case when the codimension of $W$ is $>1$ we choose a flag 
$$\mcF =\{W_0=W\subset W_1\dots \subset W_{\dim(V)-\dim(W)}=V\}, \dim (W_i)=\dim(W)+i,$$ 
and extend $f$ by induction in $i,1\leq i\leq \dim(V)-\dim(W) $ to a polynomial $F$ on $V$.
\end{proof}

\begin{remark} The choice of $F$ depends on a choice of flag $\mcF$ and on choices of projections used in the inductive arguments.
\end{remark}



\subsection{Proof of Proposition \ref{p-ext}}\label{jan-extension}

The key tool in our  proof of this proposition is a testing result from \cite{kz-uniform} which roughly says that any weakly polynomial function of degree $a$ on $X$ that is "almost" weakly polynomial of degree $<a$, namely it is a polynomial of degree $<a$ on almost all affine subspaces, is weakly polynomial of degree $<a$.  This part does not require $X$ to be of high rank. We use the assumption that $X$ is of  high rank to show (see Theorem \ref{B}) that almost any isotropic affine plane is contained in an isotropic three affine dimensional subspace that is not contained in  $l^{-1}\{0\}$. \\

We start by stating the testing result from \cite{kz-uniform}.
In \cite{KR} (Theorem 1)  the following description of degree $\le $ polynomials is given:

\begin{proposition}\label{kau-ron} Let $P:V \to k$. Then $P$ is a polynomial of degree $\le a$ if and only if
the restriction of $P$ to any affine subspace of dimension $l=\lceil \frac{a+1}{q-q/p}\rceil$ is a polynomial of degree $\le a$. 
\end{proposition}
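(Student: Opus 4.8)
The ``only if'' direction is immediate: if $P$ is the restriction to $V$ of a polynomial $\tilde P$ of degree $\le a$, and $\psi:\mathbb A^{l}\to V$ parametrises an affine subspace $L\subset V$, then $(P|_L)\circ\psi=\tilde P\circ\psi$ is a polynomial of degree $\le a$ in $l$ variables. For the converse I argue by contraposition; since being a polynomial of degree $\le a$ (and its negation) involves only the finitely many coordinates on which $P$ depends, I may assume $V=\mathbb F_q^{n}$ with $n\ge l$, and work throughout with the unique representative of a function on $\mathbb F_q^{n}$ in which each variable occurs to degree $<q$. The goal becomes: if that representative of $P$ has a monomial of degree $>a$, then $\deg(P|_L)>a$ for some $l$-dimensional affine subspace $L$; equivalently, restriction to a \emph{suitably generic} $l$-dimensional affine subspace does not push the degree below $a+1$ once $\deg P\ge a+1$.

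Write $P=\sum_jP_j$ with $P_j$ the homogeneous part of degree $j$, and $D=\deg P\ge a+1$. \emph{If $D<q$} this is easy: for an affine subspace $L$ with underlying linear subspace $L_0$, the degree-$D$ part of $P|_L$ in linear coordinates on $L_0$ equals $P_D|_{L_0}$ (no reduction modulo the $t_j^{q}-t_j$ intervenes, as $D<q$), and a nonzero form of degree $D<q$ cannot restrict to the zero polynomial on a generic linear subspace of positive dimension; so one already gets $\deg(P|_L)\ge D>a$ for a generic line $L$, and a fortiori for any $l$-dimensional affine subspace containing such an $L$.

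The substantive case is $D\ge q$, and this is precisely where the exponent $q-q/p$ is forced. The relation $t^{q}=t$ makes every restriction of $P$ to a line a polynomial of degree $\le q-1$, so degrees $\ge q-1$ are invisible to lines and one must work on higher-dimensional flats. The point is to understand how a monomial $\prod_ix_i^{e_i}$ behaves under an affine substitution $x_i\mapsto\sum_{j}a_{ij}t_j+b_i$ followed by reduction modulo the $t_j^{q}-t_j$: this is a bookkeeping of base-$p$ carries, governed by the Frobenius identity $(u+v)^{p}=u^{p}+v^{p}$ and by Lucas' congruences for the multinomial coefficients that occur, and it shows that on a generic $l$-dimensional affine subspace $L$ the polynomial $P|_L$ has degree $\min\bigl(D,\,l(q-q/p)\bigr)$ --- the genericity of $L$ both guaranteeing a monomial of $P$ whose image is not reduced away and excluding cancellation among the several degree-$D$ monomials of $P$, which restrict to linearly independent forms for a generic substitution. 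Hence, as soon as $l\ge(a+1)/(q-q/p)$, a generic $l$-dimensional affine $L$ satisfies $\deg(P|_L)=\min(D,\,l(q-q/p))>a$ whenever $\deg P>a$, as required. Making this ``a generic flat preserves enough degree'' statement precise, with the characteristic-$p$ bookkeeping sketched above, is the heart of the matter and is exactly the content of the Kaufman--Ron structure theorem; this is the step I expect to be the main obstacle, and I would either invoke that theorem directly or reprove it along these lines.
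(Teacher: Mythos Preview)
The paper does not prove this proposition at all: it is quoted verbatim as Theorem~1 of Kaufman--Ron \cite{KR} and used as a black box. So there is no ``paper's proof'' to match; the question is whether your sketch stands on its own.

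The easy direction and the case $D<q$ are fine. In the substantive case $D\ge q$ there are two concrete problems.

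First, the formula you assert --- that on a generic $l$-flat $L$ one has $\deg(P|_L)=\min\bigl(D,\,l(q-q/p)\bigr)$ --- is false as an equality. Over $\mathbb F_4$ (so $q-q/p=2$) take $P(x_1,x_2)=x_1^{3}$ or $P=x_1^{3}x_2$ and $l=1$: a direct computation shows that the restriction to a generic line has degree $3$, not $\min(D,2)=2$. What one actually needs is only the \emph{lower bound} $\deg(P|_L)>a$ for \emph{some} $l$-flat when $D>a$, and that is a different (and weaker) statement than the one you wrote down. Over a finite field ``generic'' is also delicate: a nonempty Zariski-open set of flats need not contain any $\mathbb F_q$-point, so the argument must produce an explicit flat, not merely a generic one.

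Second, your final paragraph is circular. You identify the hard step as ``exactly the content of the Kaufman--Ron structure theorem'' and propose to ``invoke that theorem directly''; but the proposition you are asked to prove \emph{is} Theorem~1 of \cite{KR}. The actual argument in \cite{KR} does not proceed via generic flats. It takes a maximal-degree monomial $\prod_i x_i^{e_i}$ of $P$, writes each $e_i$ in base~$p$, and uses Lucas' theorem to \emph{construct an explicit} $l$-dimensional coordinate subspace (by grouping the variables according to the $p$-adic digits so that the resulting exponents in the $t_j$ stay below $q$) on which this monomial restricts to a monomial of degree $>a$ that cannot be cancelled by the other terms. The constant $q-q/p=(p-1)p^{\,s-1}$ arises precisely because each $t_j$ can absorb exponent mass at most $(p-1)$ from each of the $s$ base-$p$ digit positions without triggering a reduction. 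Your Lucas/Frobenius remarks point in the right direction, but the explicit grouping construction --- not a genericity argument --- is what makes the proof go through.
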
 

Note that when $a<q$ then $l\le 2$.  \\

In \cite{KR} the above criterion is used for polynomial testing over general finite fields. In \cite{kz-uniform} (Corollary 1.13) it is shown how the arguments in \cite{KR} can be adapted to polynomial testing within a subvariety variety $X \subset V$ (high rank is not required). 

\begin{proposition}[Subspace splining on $X$]\label{testing-lines}For any  $a,d, c>0$ there exists an $A=A(d,c,a) > 0$, depending polynomially on $c,d$ and exponentially on $a$, such that the following holds. 
Let $X \subset V(k)$ be  a  complete intersection of degree $d$, codimension $c$. Then any  weakly polynomial  function $f$ of degree $a$
such that the restriction of $f$ to $q^{-A}$-a.e  $l$-dimensional affine subspace, 
$l=\lceil \frac{a}{q-q/p}\rceil$, is a polynomial of degree $<a$, is weakly polynomial of degree $<a$.
\end{proposition}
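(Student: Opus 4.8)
Since this is the local-testing statement of \cite{kz-uniform}, which adapts \cite{KR} to a subvariety, I will only describe the strategy; note that high rank of $X$ plays no role here. The plan has three steps: a Kaufman--Ron reduction to $l$-dimensional subspaces, a local self-correction carried out inside $X$, and a comparison step, with all quantitative bounds kept dependent on $a,d,c$ alone by uniform counting of subspaces in $X$. First I would apply Proposition \ref{kau-ron} with the degree bound $a-1$, so that the critical dimension becomes exactly $l=\lceil\frac{a}{q-q/p}\rceil$: a function on $X$ is weakly polynomial of degree $<a$ if and only if its restriction to every $l$-dimensional affine subspace $M\subset X$ is a polynomial of degree $<a$. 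Since $f$ is weakly polynomial of degree $a$, every $f|_M$ is already a polynomial of degree $\le a$, and by hypothesis $f|_M$ has degree $<a$ for all but a $q^{-A}$-fraction of the $l$-dimensional $M\subset X$; so the task is to upgrade ``degree $<a$ on almost every such $M$'' to ``on every such $M$''.

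Next I would run a Kaufman--Ron-type self-correction inside $X$. Call an $l$-dimensional $M\subset X$ \emph{good} if $f|_M$ has degree $<a$, and for $x\in X$ define $g(x)$ by a plurality vote, over good $M\subset X$ through $x$, of the value $f|_M(x)=f(x)$; the vote is there to produce a single globally consistent function. To make this work one needs combinatorial input about $X$: through a generic point, and through a generic $l$-dimensional subspace, of $X$ there pass enough $l$-dimensional (resp.\ $(l+1)$-dimensional) affine subspaces of $X$, and two such subspaces through a point meet in enough common affine configurations inside $X$, so that the relevant incidence structures are good samplers. Granting this, $g$ is well defined, equals $f$ off a set of density $O(q^{-A})$, and---by averaging over $l$-dimensional subspaces together with Proposition \ref{kau-ron}---is itself weakly polynomial of degree $<a$ on $X$. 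Then $h:=f-g$ is weakly polynomial of degree $\le a$ on $X$ and vanishes on all but an $O(q^{-A})$-fraction of the $l$-dimensional subspaces. Since $a<q$, a polynomial of degree $\le a$ on an affine space that vanishes on more than a prescribed (in terms of $a$) fraction of its $l$-dimensional subspaces is identically zero; embedding an arbitrary $l$-dimensional $M\subset X$ into a slightly larger affine subspace of $X$ that carries that many good sub-subspaces gives $h|_M\equiv 0$ for every $M$, i.e.\ $f=g$. Choosing $A$ large in terms of the sampler constants---polynomially in $c,d$ and exponentially in $a$---makes every exceptional set empty.

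I expect the main obstacle to be exactly the uniform subspace counting: one must show that for a complete intersection $X$ of degree $d$ and codimension $c$, the incidences between points (or $l$-dimensional subspaces) of $X$ and the $l$- and $(l+1)$-dimensional affine subspaces of $X$ through them are good samplers with parameters bounded in terms of $d,c$ \emph{independently of $\dim V$}, so that the large ambient-dimension contributions cancel and $A$ does not blow up. This is where the complete-intersection and bounded-degree/codimension hypotheses are essential; the rest is a transcription of the Kaufman--Ron correction argument together with elementary facts about restrictions of degree-$<q$ polynomials to affine subspaces.
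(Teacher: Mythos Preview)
Your outline has the right endpoints---Proposition~\ref{kau-ron} with degree bound $a-1$ reduces the task to ``degree $<a$ on every $l$-flat in $X$'', and you correctly identify uniform subspace counting inside $X$ (with bounds depending only on $d,c$) as the essential input. But the self-correction step in the middle is confused and, as written, vacuous. You define $g(x)$ as the plurality vote, over good $M$ through $x$, of $f|_M(x)$; but $f|_M(x)=f(x)$ for every $M$, so $g=f$ identically. There is nothing to correct: the Kaufman--Ron self-corrector is built to round a nearby function \emph{toward} a polynomial, whereas here $f$ is already a polynomial on each flat. Your later claims that ``$g$ equals $f$ off a set of density $O(q^{-A})$'' and that $h=f-g$ is nonzero on a small fraction of flats then contradict $g=f$. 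The only contentful step that remains is the last one---embedding a bad flat into a larger flat of $X$ carrying many good sub-flats---and that step is in fact the whole argument.

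The paper's Appendix makes exactly this direct. Let $A$ be the set of affine lines in $X$, let $C\subset A$ be the ``bad'' lines with $\deg(f|_L)=a$, and let $E\subset A\times A$ be the incidence of pairs of lines contained in a common $3$-dimensional affine flat $W\subset X$. The propagation observation is that $C$ is $(a/q)$-invariant for this incidence: if $L\in C$ and $L\subset W$ then $f|_W$ is a genuine polynomial of degree exactly $a$ on a $3$-space, and an elementary count in $\mP^2$ shows at most an $a/q$-fraction of lines in $W$ kill its top homogeneous part, so at least a $(1-a/q)$-fraction of lines of $W$ lie in $C$. The density side is supplied by the subspace-counting lemma (Lemma~A.5 of \cite{kz-uniform}), which gives that $E$ is $q^{-t(\bar d)}$-dense with $t(\bar d)$ depending only on the degree data. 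A one-line pigeonhole (Claim~\ref{emp}: if an incidence is $\delta$-dense and $C$ is $\epsilon$-invariant with $|C|/|A|<\delta(1-\epsilon)$, then $C=\emptyset$) then forces $C=\emptyset$, i.e.\ $f$ is weakly polynomial of degree $<a$. So your instinct about ``embed a bad $M$ into a larger flat'' is exactly the mechanism; drop the self-correction detour and run the invariance/density argument directly.
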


\begin{proof}[Proof of Proposition \ref{p-ext}]
Let $V$ be a vector space and $l:V\to k$ be a non-constant affine function. For any subset $I$ of $k$ we denote $\mW _I=\{ v \in \mV |l(v)\in I\}$, so that  $\mW _b =\mW _{\{ b\} }$, for $b\in k$.
For a hypersurface $\mX \in \mV$ we write $\mX _I=\mX \cap \mW_I$.

\begin{lemma}\label{l} For any  finite subset $S\subset k$, any weakly polynomial function $f$ of degree $a$ on $X$ such that 
$f_{|X_S} \equiv 0$, and any $b\in k\setminus S$, there exists  a polynomial $Q$ of degree $\leq a$ on $V$ such that 
$Q_{|X_S}\equiv 0$ and
 $(Q-f)_{|X_b} \equiv 0$.
\end{lemma}

\begin{proof}
We start with the following result.

\begin{claim}Under the assumptions of Lemma \ref{l}, the restriction $f_{|X_b}$ is a weakly polynomial function of degree $\leq a-|S|$.
\end{claim} 

\begin{proof} Since $|k|>a$ it suffices to show that  for any plane $L\subset X_b$ the restriction $f_{|L}$ is a polynomial of degree  $\leq a-|S|$.

We first do the case when the field  $k$ is finite. Since by the part (4) of Theorem \ref{AC} the variety $\mX$ is a complete intersection it  follows from Proposition \ref{testing-lines}, there is a constant $A= A(d,a)$ such that it suffices to check the restriction $f_L$  on $q^{-A}$-almost any affine plane $L\subset X_b$ is a polynomial of degree $\leq a-|S|$.

As follows from Proposition \ref{line-plane}, for any $s>0$ there is an $r=r(d,s)$  such that if $X$ is of nc-rank $>r$ then for $q^{-s}$-almost any  affine plane $L\subset X_b$ there exists  an affine $3$-dim subspace  $M\subset X$ containing $L$ and such that $M\cap X_0\neq \emp$.  Then $M\cap X_t\neq \emp$ for any $t \in k$. Since $f$ is a weakly polynomial function of degree $a$, its restriction to $M$ is a polynomial $R$ of degree $\leq a$. Since the restriction of $R$ to  $l^{-1}(S) \cap M$ is identically zero, we see that $R=R'\prod _{s\in S}(l-s).$ 
Since $l|_L\equiv b$, we see that the restriction $f|_L$  is equal to $R'$, which is a 
is a polynomial of degree $\leq a-|S|$.
\end{proof}
Now we show that this claim implies Lemma \ref{l}. Indeed, assume that $f_{|X_b}$ is a weakly polynomial function of degree $\le a-|S|$.  It follows from the inductive assumption  on $a$, that there exists  a polynomial $Q'$ of degree $\leq a-|S|$ on $V$, such that $f_{|X_b}=Q'_{|X_b}.$  Let $Q:=\frac {Q'\prod _{s\in S}(l-s)}{\prod _{s\in S}(b-s) }$. Then $(f-Q)_{X_{S \cup \{b\}}}\equiv 0.$

For algebraically closed fields we follow  the same argument replacing Proposition \ref{testing-lines} with   Proposition \ref{asplining}, and Proposition \ref{line-plane}, with Theorem \ref{BC}. 
\end{proof}

 Proposition \ref{p-ext} follows from Lemma \ref{l} by induction. \\
\end{proof}

Now we can prove Theorem \ref{main}:

\begin{proof}[Proof of Theorem \ref{main} assuming Theorems \ref{const} and \ref{Jan},  and Corollary \ref{extension1}]

Let $\ti r$ be from Corollary \ref{extension1} and $r=r(a, \bar d):=\rho (\dim (W),d)$  from Theorem \ref{Jan}. As follows from Theorem \ref{const}   the subvarieties $\mX _n$ are of  nc-rank $\geq \ti r$ for $n\geq d\ti r$.

 Let $\mX \subset \mV$ be a subvariety of nc-rank $\geq r$.  By Theorem \ref{Jan}  there exists a linear 
map $\phi :\mW \to \mV$ such that $\mX _n=\{ w\in \mW|\phi (w)\in \mX\}$. Since $\mX _n$
satisfies $\star^k_a$,  Corollary \ref{extension1} implies that $\mX$ satisfies $\star ^k_a$. 
\end{proof}

\section{Nullstellensatz}

Let $k$ be a field and $V$ be a finite dimensional $k$-vector space. We denote by $\mV$ the 
corresponding $k$-scheme, and by $\mcP (V)$ the algebra of polynomial functions on $\mV$ defined over $k$.

For  a finite collection  $\bar P = (P_1,\ldots,P_c)$ of polynomials on $\mV$ we denote by $J(\bar P)$ the 
ideal in $ \mcP (V) $ generated by these polynomials, and by $\mX _{\bar P}$ the subscheme of $\mV$ defined by 
this ideal.

Given a polynomial $R \in \mcP(\mV)$, we would like to find out whether it belongs to the ideal 
$J(\bar P)$.  It is clear that the following condition is necessary for the inclusion $R\in J(\bar P)$.

\medskip
(N)   $R(x) = 0$ for all $k$-points $x \in X_{\bar P}(k) $.
\medskip

\begin{proposition}[Nullstellensatz]\label{Null} Suppose that the field $k$ is algebraically closed  and the scheme $\mX _{\bar P}$ is reduced. Then any polynomial $R$ satisfying the  condition $(N)$ lies 
in  $J(\bar P)$. 
\end{proposition}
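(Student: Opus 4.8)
The plan is to reduce the statement to the classical Hilbert Nullstellensatz. Since $k$ is algebraically closed, the classical Nullstellensatz gives $I(X_{\bar P}(k)) = \sqrt{J(\bar P)}$, where $I(X_{\bar P}(k))$ is the ideal of all polynomials vanishing on the $k$-points of the variety. So condition $(N)$ says precisely that $R \in \sqrt{J(\bar P)}$. It therefore suffices to show that the hypothesis ``$\mX_{\bar P}$ is reduced'' implies $\sqrt{J(\bar P)} = J(\bar P)$, i.e.\ that $J(\bar P)$ is a radical ideal.

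The first step is to recall that an affine scheme is reduced if and only if its coordinate ring has no nonzero nilpotents, i.e.\ iff the defining ideal is radical. The scheme $\mX_{\bar P}$ is by definition $\operatorname{Spec}$ of $\mcP(V)/J(\bar P)$, so ``$\mX_{\bar P}$ reduced'' is literally the statement that $J(\bar P)$ is a radical ideal in $\mcP(V)$. Hence $\sqrt{J(\bar P)} = J(\bar P)$ by definition, and combined with the classical Nullstellensatz we get $R \in I(X_{\bar P}(k)) = \sqrt{J(\bar P)} = J(\bar P)$, which is what we want.

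The second step — really the only place where there is anything to check — is the translation between the set-theoretic condition $(N)$ and the scheme-theoretic ideal membership. Here I would be careful about one subtlety: $\mcP(V)$ is the ring of polynomial \emph{functions} on $\mV$, and over an infinite field (algebraically closed fields are infinite) this coincides with the polynomial ring $k[V^\vee] = k[x_1,\dots,x_n]$, so there is no loss of information in passing between polynomials and the functions they define. With this identification, $X_{\bar P}(k)$ is exactly the zero set in $\mathbb{A}^n(k) = k^n$ of the ideal $J(\bar P)$, and condition $(N)$ is exactly $R \in I(X_{\bar P}(k))$. So the classical Nullstellensatz applies verbatim.

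I do not expect a genuine obstacle here: the proposition is essentially a restatement of the classical Nullstellensatz plus the definition of ``reduced.'' The only thing that requires a sentence of care is making sure the function-theoretic framework of the paper (where $\mcP(V)$ is functions, not formal polynomials, and where one works with $X(k)$) is compatible with the scheme-theoretic language in which ``reduced'' is phrased — and this compatibility holds because $k$ is algebraically closed, hence infinite, so the map from polynomials to polynomial functions is injective. In short: unwind the definition of reduced to get $J(\bar P) = \sqrt{J(\bar P)}$, invoke classical Nullstellensatz to get $R \in \sqrt{J(\bar P)}$, conclude $R \in J(\bar P)$.
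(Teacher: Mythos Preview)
Your proposal is correct. The paper itself does not supply a proof of this proposition: it is stated as the classical Nullstellensatz (a known background result) to motivate the finite-field analogue in Theorem~\ref{main-null}, so there is nothing to compare against beyond noting that your unwinding of ``reduced'' $\Leftrightarrow$ $J(\bar P)$ radical, together with Hilbert's Nullstellensatz, is exactly the intended content.
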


We will show that the analogous result hold for $k=\mF _q$ if $\mX _{\bP}$ is of high $nc$-rank.

From now on we  fix a degree vector $\bd = (d_1,\ldots,d_c)$ and write
 $D:=\prod _{i=1}^cd_i$. We denote by $\mcP _{\bar d}(\mV)$ the space of  $\bar d$-families of polynomials  $\bP = (P_i)_{i=1}^c$ on $V$ such that $\operatorname{deg}(P_i) \leq d_i$. 

\begin{theorem}\label{main-null} There exists  and an effective bound $r(\bd)>0$ such that for any finite field $k=\mF _q$ with $q>aD$,  any family $\bar P$ of degrees $\bar d$ and nc-rank $\geq r(\bd) $  the following holds. Any polynomial 
$Q$ on $V$ of degree $a$ vanishing on $X_{\bar P}$ belongs to the ideal 
$J(\bP)$. 
\end{theorem}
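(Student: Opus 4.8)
The plan is to deduce the statement over $\mF_q$ from the classical Nullstellensatz over the algebraic closure $K:=\ov{\mF_q}$, using the equidistribution Proposition \ref{size} to bridge between $\mF_q$-rational points and $K$-points. Write $n:=\dim V$; since a family of nc-rank $\ge r(\bd)$ can only exist when $n$ is large, we may and do assume $n\ge c+2$.

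\emph{Geometry over $K$.} First I would record that, for $r(\bd)$ large enough (and still effective), the scheme $\mX_{\bP}$ is, over $K$, a geometrically integral complete intersection of pure dimension $n-c$ with $\deg\mX_{\bP}\le D=\prod_i d_i$. Indeed, Theorem \ref{AC}(4) makes $\bP$ a regular sequence over $K$, so $\mX_{\bP}$ is Cohen--Macaulay of dimension $n-c$; high nc-rank forces the singular locus of $\mX_{\bP}$ to have large codimension, so Serre's criterion $R_1+S_2$ gives normality, hence reducedness and (via connectedness of a positive-dimensional affine complete intersection, or a posteriori from the point count below) irreducibility; the degree bound is the affine B\'ezout inequality. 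In particular the ideal $J(\bP)_K\subset\mcP(V)\otimes_{\mF_q}K$ generated by $\bP$ is radical.

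\emph{Descending the zero set of $Q$.} Next I would show $Q$ vanishes on all of $\mX_{\bP}(K)$. Suppose not; then, $\mX_{\bP}$ being $K$-irreducible and not contained in $\{Q=0\}$, the closed subvariety $\mathcal Y:=\mX_{\bP}\cap\{Q=0\}$ has $\dim\mathcal Y\le n-c-1$ and, by B\'ezout, $\deg\mathcal Y\le D\deg Q=aD$. As $q>aD$, i.e. $aD\le q-1$, the standard bound $|\mathcal Z(\mF_q)|\le(\deg\mathcal Z)\,q^{\dim\mathcal Z}$ gives
\[
|\mathcal Y(\mF_q)|\ \le\ aD\cdot q^{\,n-c-1}\ \le\ (q-1)\,q^{\,n-c-1}\ =\ q^{\,n-c}-q^{\,n-c-1}.
\]
On the other hand, taking $r(\bd)\ge r(\bd,2)$ in Proposition \ref{size} the map $\bP:V\to k^c$ is $q^{-2}$-uniform, so $|X_{\bP}(\mF_q)|=|X^{\bar 0}_{\bP}|\ge q^{\,n-c}(1-q^{-2})=q^{\,n-c}-q^{\,n-c-2}$. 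Since $Q$ vanishes on $X_{\bP}(\mF_q)$ and $X_{\bP}(\mF_q)\subseteq\mX_{\bP}(K)$, we get $X_{\bP}(\mF_q)\subseteq\mathcal Y(\mF_q)$, hence $q^{\,n-c}-q^{\,n-c-2}\le q^{\,n-c}-q^{\,n-c-1}$, which is false for $n\ge c+2$. Thus $Q$ vanishes on $\mX_{\bP}(K)$.

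\emph{Conclusion and main obstacle.} As $\mX_{\bP}$ is reduced over $K$, Proposition \ref{Null} gives $Q\in J(\bP)_K$; a standard descent (choosing an $\mF_q$-basis of $K$ shows $J(\bP)_K\cap\mcP(V)=J(\bP)$) yields $Q\in J(\bP)$. Setting $r(\bd):=\max\{\text{threshold of the first step},\ r(\bd,2)\}$ — effective and depending only on $\bd$, crucially \emph{not} on $a$ — finishes the proof. The hard part is the counting in the middle step: because $\deg Q=a$ is unbounded, $\mathcal Y$ can have degree of order $q$, so $|\mathcal Y(\mF_q)|$ falls below $q^{\,n-c}$ only by a factor $\approx(1-q^{-1})$, and the usual Lang--Weil error $O(q^{\,n-c-1/2})$ is too coarse to win this comparison; it is precisely the rank-dependent equidistribution of Proposition \ref{size}, with its arbitrarily small error $q^{-s}$ (here $s=2$), that closes the gap, and does so with a rank bound free of $a$, which is the advertised strengthening of \cite{gt1}. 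A secondary difficulty is the first step: one must genuinely use that high nc-rank makes $\mX_{\bP}$ geometrically integral, so that the dimension and degree estimates for $\mathcal Y$ are legitimate.
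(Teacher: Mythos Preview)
Your proof is correct and follows essentially the same approach as the paper: both compare the B\'ezout-type upper bound $|\mY(\mF_q)|\le aD\,q^{\,n-c-1}$ on the intersection $\mY=\mX_{\bar P}\cap\{Q=0\}$ with the equidistribution lower bound $|X_{\bar P}(\mF_q)|\ge q^{\,n-c}(1-q^{-s})$ coming from Proposition~\ref{size}. You are more explicit than the paper about the geometric input (geometric integrality of $\mX_{\bar P}$, radicality of $J(\bar P)_K$) and about the concluding Nullstellensatz-plus-descent step, but the core strategy is identical.
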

\begin{proof}  Our proof is based on  the following {\it rough bound} (see \cite{hr}).
\begin{lemma}\label{rb} Let $\bar P=\{ P_i\}_{i=1}^c \subset \mF _q[x_1,\dots ,x_n]$ be a family of polynomials of degrees $d_i,1\leq i\leq c$ such that the variety 
$\mY :=\mX _{\bar P} \subset \mA ^n$ is of dimension $n-c$. Then
 $|\mY (\mF _q)|\leq q^{n-c}D$ where $D:=\prod _{i=1}^cd_i$.
\end{lemma}
For the convenience we reproduce the proof of this result.
\begin{proof} Let $F$  be the algebraic closure of $\mF _q$. Then $\mY (\mF _q)$ is the  intersection of $\mY$ with hypersurfaces $Y_j, 1\leq j\leq n$ defined by the equations $h_j( x_1,\dots ,x_n)=0$ where 
$h_j( x_1,\dots ,x_n)= x_j ^q-x_j$.

Let $H_1,\ldots,H_{n-c}$ be generic linear combinations of the $h_j$ with algebraically independent 
coefficients from an transcendental extension  $F'$ of $F$ and  $\mZ _1,...,\mZ _{n-c}\subset \mA ^n$ be the corresponding hypersurfaces. 

Intersect successively $\mY$ with $\mZ _1,\mZ _2,\dots ,\mZ _{n-c}$.   Inductively we see that for each $j \leq n-c$, each component $C$ of the intersection
 $\mY \cap  \mZ _1 \cap \dots \cap \mZ _j$ has dimension $n-c-j$.   Really passing from $j$ to $j+1$ for $j<n-c$ we have $\dim(C)=n-c-i>0$. So  not all the functions $h_j$ vanish on $C$. Hence by the genericity of the choice of linear combinations $\{H_j\}$ we see that 
$H_{j+1}$ does not vanish on $C$ and therefore $\mZ _{j+1}\cap C$ is 
of pure dimension $n-c-j-1$.    Thus  the intersection
 $\mY \cap  \mZ _1 \cap \dots \cap  \mZ  _{n-c}$ has dimension $0$.  By Bezout's theorem we see that  $|\mY \cap  \mZ _1 \cap \dots \mZ _{n-c}|\leq q^{n-c}D$. Since $\mY (\mF _q)=\mY  \cap  \mZ _1 \cap \dots \cap \mZ _n\subset \mX \cap  \mY _1 \cap \dots  \cap \mY _{n-c} $ we see that 
 $|\mY (\mF _q)|
\leq q^{n-c} D$.
\end{proof}

Now we can finish the proof of Theorem \ref{main-null}. Let $R \in \mF _q[x_1,\dots ,x_n] $ be a polynomial of degree $a$ vanishing on the set $\mX _{\bar P}(\mF _q)$. Suppose that $R$ does not lie in the ideal generated by the $P_i$, $1\leq i\leq c$. Let  $\mY :=\mX _{\bar P}$. Since $R$ vanishes on 
$\mX _{\bar P}(\mF _q)$ we have $\mY (\mF _q) = \mX _{\bar P}(\mF _q) $.

Since $\mX _{\bar P} $ is irreducible we have $\dim(\mY)=n-c-1$. As follows from \ref{rb} we have the upper bound
$|\mY (\mF _q)| \leq aD q^{n-c-1}$. On the other hand as follows from Theorem \ref{uniform} there exists an effective bound $r(\bd)>0$ such that the condition $r_{nc}(\bar P)\geq r(\bd) $  implies the inequality 
$|\mX _{\bar P}(\mF _q)|> q^{n-c}\frac{q-1}{q}$, which is a contradiction to $q>aD$. 
\end{proof}


\appendix \label{appendix-splining}

\section{An Algebro-Geometric analogue of Proposition  \cite{kz-uniform}} 
Proposition \ref{testing-lines} was proved in
\cite{kz-uniform}. In this Appendix  we present another approach leading to a proof of Proposition \ref{testing-lines} 
which (after some adjustments) provides a proof of an  Algebro-Geometric analogue of Proposition \ref{testing-lines}.

We start with two auxiliary results.

\begin{definition} Let $A$ be a finite set, $B\subset A\times A,  \ Z\subset A,$ and let $p_1, p_2: B\to A$ be the natural projections. For $x\in A$ we define  $B_x:=p_2(p^{-1})(x)\subset A$. Let $\beta (x):=|B_x|/|A|$.  
\begin{enumerate}
\item $B$ is $\delta $-dense if $\beta (x) \geq \delta , \ x\in A$.
\item A subset $C\subset A$ is $\ep$-invariant if $| B_z \cap C|/| B_z |\geq 1-\ep , \ z\in C$.
\end{enumerate}
\end{definition}

\begin{claim}\label{emp} If  $B$ is $\delta $-dense, $C\subset A$ is $\ep$-invariant, and $|C|/|A|< \delta (1-\ep) $,
then $C=\emp$.
\end{claim}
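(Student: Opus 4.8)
The statement is a straightforward contrapositive/counting argument, so the plan is simply to assume $C \neq \emptyset$, pick a witness point, and derive the reverse of the density inequality. There is no genuine obstacle here; the only thing to be careful about is translating the two defining conditions into the right numerical inequalities and chaining them in the correct direction.

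\textbf{Key steps.} First I would suppose, for contradiction, that $C \neq \emptyset$ and fix some $z \in C$. Since $B$ is $\delta$-dense we have $\beta(z) = |B_z|/|A| \geq \delta$, hence $|B_z| \geq \delta |A|$. Next, since $C$ is $\epsilon$-invariant and $z \in C$, the defining inequality gives $|B_z \cap C| \geq (1-\epsilon)|B_z|$. Combining these two bounds yields
\[
|B_z \cap C| \;\geq\; (1-\epsilon)|B_z| \;\geq\; (1-\epsilon)\,\delta\,|A|.
\]
Finally, since $B_z \cap C \subseteq C$ we get $|C| \geq |B_z \cap C| \geq \delta(1-\epsilon)|A|$, i.e. $|C|/|A| \geq \delta(1-\epsilon)$, contradicting the hypothesis $|C|/|A| < \delta(1-\epsilon)$. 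Therefore $C = \emptyset$.

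\textbf{Remarks.} The argument uses only one point $z \in C$, so no averaging or iteration is needed; the roles of $\delta$ and $1-\epsilon$ are exactly symmetric in the final product, and the strictness of the hypothesis is what makes the contradiction work (equality would be permissible). This is the expected shape of the proof and I would present it essentially verbatim as above.
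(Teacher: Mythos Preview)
Your proof is correct and follows essentially the same counting/contradiction approach as the paper. The only cosmetic difference is that the paper sums the bound $|B_z \cap C| \geq \delta(1-\epsilon)|A|$ over all $z \in C$ to obtain $|C|^2 \geq |C|\,\delta(1-\epsilon)|A|$ before dividing through, whereas you go directly via a single witness $z \in C$; your version is marginally cleaner but the content is identical.
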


\begin{proof} Let $\alpha =|A|,\beta =|B| ,\Gg =|C| $. Since 
$$\bigcup _{z\in C}z \times B_z \cap C \subset C\times C,$$
we have $\Gg ^2\geq \alpha \Gg \delta (1-\ep)$. Since  $\Gg /\alpha < \delta (1-\ep)$, we see that $\Gg =0$.
\end{proof}

Let $k$ be a field and  $Q\in k[x,y,z]$ be a homogeneous polynomial of degree $a$. We denote by $\mC _Q\subset \mP ^2$ the subvariety  of $1$-dimensional subspaces $ L$ in $\mA ^3$, such that 
$Q_{ L}\equiv 0$, and write $C_Q:= \mC _Q(k)$. 

\begin{claim}\label{bounds}\leavevmode
\begin{enumerate}
\item The subvariety $\mC _Q\subset \mP ^2$  is a curve, and therefore the subset $\mP ^2 -\mC _Q$ of $\mP ^2 $ is Zariski dense.
\item If $k=\mF _q$ then
$|C_Q|\leq aq$.
\end{enumerate}
\end{claim}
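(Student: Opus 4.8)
The plan is to handle the two parts independently; both are classical facts about plane curves over a field, and the only genuinely delicate point is the sharp constant in (2).

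For part (1): if $a=0$ then $Q$ is a nonzero constant and $\mC_Q=\emp$, so assume $a\geq1$. Then $Q$ is a nonzero nonunit in a domain, hence a nonzerodivisor, so by Krull's principal ideal theorem every irreducible component of the closed subscheme $\mC_Q=\{Q=0\}\subset\mP^2$ has codimension exactly $1$; thus $\dim\mC_Q=1$, i.e. $\mC_Q$ is a curve, and in particular $\mC_Q\subsetneq\mP^2$ is a proper Zariski-closed subset. Since $\mP^2$ is irreducible, the complement $\mP^2\setminus\mC_Q$ is a nonempty Zariski-open set and therefore dense. No obstacle arises here.

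For part (2): I would project linearly from a point. In the situations where the claim is used $q$ exceeds $a$, and a nonzero form of degree $a<q$ cannot vanish on all of $\mF_q^3$, so there is a point $p\in\mP^2(\mF_q)$ with $Q(p)\neq0$, that is $p\notin C_Q$. Every $\mF_q$-point of $\mP^2$ distinct from $p$ lies on exactly one of the $q+1$ lines through $p$, whence
\[
|C_Q|=\sum_{\ell\ni p}\,|C_Q\cap\ell|.
\]
For each such line $\ell$ the restriction $Q|_\ell$ is a binary form of degree $a$, and it is nonzero because $p\in\ell$ and $Q(p)\neq0$; a nonzero binary form of degree $a$ has at most $a$ zeros in $\ell\cong\mP^1$, so each summand is $\leq a$ and we obtain $|C_Q|\leq a(q+1)$.

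The one point to monitor is that this elementary argument produces $a(q+1)$ rather than exactly $aq$. For every use of the claim (all of which feed into density estimates of the form $|C_Q|/|\mP^2(\mF_q)|=O(a/q)$) this loss is harmless, so I would be inclined to record the clean bound $a(q+1)$. If the stated constant $aq$ is insisted upon, one separates off the $\mF_q$-rational linear components $L_1,\dots,L_b$ of $\mC_Q$, bounds their contribution by $b(q+1)$, and applies Serre's estimate $(a-b-1)q+1$ to the residual curve, which has no $\mF_q$-line as a component; together with $b\leq a<q$ this yields $|C_Q|\leq aq$. I expect this last bookkeeping, if it is needed at all, to be the only mildly subtle step; everything else is routine.
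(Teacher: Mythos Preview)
Your proposal is correct and matches the paper's approach essentially line for line: for (1) the paper simply declares it ``immediate,'' and for (2) it fixes a point $p\in\mP^2(k)\setminus C_Q$ and bounds each of the $q+1$ lines through $p$ by $a$ points of $C_Q$, arriving---exactly as you do---at $|C_Q|\leq a(q+1)$ rather than the stated $aq$. Your caution about the constant is well placed; the paper's own proof in fact only establishes $a(q+1)$, and this suffices for all subsequent uses.
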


\begin{proof} The part (1) is immediate. To prove (2) we fix a point $a\in \mP ^2(k) -C_Q $. For any line $M\subset \mP ^2(k)$ containing $a$, the subset $M\cap C_Q \subset M$ is defined by a non-zero polynomial of degree $a$. So $|M\cap C_Q|\leq a$. We see that  $|C_Q|\leq a(q+1)$.
\end{proof}

Let $R\in k[x,y,z]$ be a polynomial of degree $a$. We denote by $Q$ the degree $a$ homogeneous part of $P$.
We denote by $\mcL _3$ the variety  of affine lines $L$ in $\mA ^3$. For any $L\in \mcL _3$, we denote by $\ti L$ the corresponding $1$-dimensional linear subspace of $\mA ^3$.

Let $\mcL _R \subset \mcL _3$ be the subvariety of lines $L$, such that $P_L$ is of degree $<a$.

\begin{claim}\label{bounds1}\leavevmode
\begin{enumerate}
\item $L\in \mcL _P \lr \ti L\in C_Q$.
\item $|\mcL _P| / |\mcL |\leq a/q$.
\end{enumerate}
\end{claim}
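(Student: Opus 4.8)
The plan is to derive both parts from one elementary computation on the restriction of $R$ to a line, followed by a direct point count over $\mF_q$. Throughout I write $\mcL_R$ and $\mcL_3$ for the objects called $\mcL_P$ and $\mcL$ in the statement, and recall that $Q$ denotes the homogeneous degree-$a$ part of $R$.

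For part (1), I would fix an affine line $L \in \mcL_3$, parametrize it as $t \mapsto v_0 + tw$ with $0 \neq w$ spanning the direction $\ti L$, and expand. Writing $R = Q + R'$ with $Q$ homogeneous of degree $a$ and $\deg R' \le a-1$, homogeneity of $Q$ gives $R(v_0 + tw) = Q(w)\,t^a + (\text{terms of degree} \le a-1 \text{ in } t)$, so the coefficient of $t^a$ in the one-variable polynomial $R_{|L}$ is exactly $Q(w)$. Hence $R_{|L}$ is of degree $<a$ precisely when $Q(w)=0$, i.e.\ precisely when $\ti L$, viewed as a point of $\mP^2$, lies on the curve $\{Q=0\}$. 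This identity holds over any field and is independent of the choice of $v_0$ and $w$, so it in fact identifies $\mcL_R$ with the scheme-theoretic preimage of that curve under the direction map $L \mapsto \ti L$; on $k$-points this is exactly the asserted equivalence $L \in \mcL_R \Leftrightarrow \ti L \in C_Q$.

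For part (2), I would use part (1) to write $\mcL_R(k)$ as the disjoint union, over the directions lying in $C_Q$, of the sets of affine lines with that fixed direction. The direction map $L \mapsto \ti L$ exhibits $\mcL_3(k)$ as a disjoint union over the $|\mP^2(k)| = q^2+q+1$ directions, and the fiber over each direction --- the set of cosets of a fixed line in $k^3$ --- has $q^3/q = q^2$ elements. Therefore $|\mcL_3(k)| = (q^2+q+1)q^2$ and $|\mcL_R(k)| = |C_Q|\,q^2$, so $|\mcL_R|/|\mcL_3| = |C_Q|/(q^2+q+1)$. Combining with the bound $|C_Q| \le aq$ from Claim \ref{bounds} gives $|\mcL_R|/|\mcL_3| \le aq/(q^2+q+1) \le a/q$, as claimed.

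I do not expect a genuine obstacle: part (1) is a one-line expansion and part (2) is a count using Claim \ref{bounds}. The only points that need a little care are reading ``degree $<a$'' as the formal degree of the one-variable restriction $R_{|L}$ (which coincides with degree as a function in the range $q>a$ that is relevant in the applications), and verifying that the direction map has all fibers of size $q^2$, which is exactly what turns part (1) into the stated ratio.
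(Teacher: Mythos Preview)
Your proposal is correct and is exactly the approach the paper has in mind: the paper's own proof simply says part (1) is ``immediate'' and part (2) ``follows from the part (1) and Claim \ref{bounds},'' and your argument is the natural unpacking of those two sentences. Your handling of the notational slip ($\mcL_P$ versus $\mcL_R$) and your check that $aq/(q^2+q+1)\le a/q$ are both fine.
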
 
\begin{proof}The part (1) is immediate and the part (2) follows from the part (1) and Claim \ref{bounds}.
\end{proof}

Let $k$ be a field and $\mX :=\mX _{\bar P}$, where  $\bar P=( P_i)_{1\leq i\leq c},$ are polynomials on $k^N$ of degrees $\leq d_i$. We denote by $ \mA$ the variety  of affine lines in $\mX$,  by $ \mB$ the variety  of $3$-dimensional affine subspaces in $\mX$ and by $ \mE \subset \mA ^2$ the subvariety of pairs if lines $L,L'\in \mA$ such that there exists unique $W\in \mB$ containing both $L$ and $L'$. We have natural projections $p_1,p_2:\mE \to \mA$ and the surjection  $j:\mcE \to \mcB$.
We write $A:=\mA (k),B: =\mB (k) ,E: =\mE (k) $.
For any subvariety $\mZ \subset \mA$ we define $\ti Z:=p_2(p_1^{-1}(\mZ))$.

Let $f:X\to k$ be a weakly polynomial function of degree $\leq a$.
We denote by $C=C_f\subset A$ the subset of lines $L$ such that $\deg(f_L)=a$.  For $W\in B$ we denote by  $A(W)\subset A$ the set of lines $L\in W$, and write  $C(W):=A(w)\cap C$.

We will show that under the assumption that $r_{nc}(\bar P)\gg1$ and $k$ is either a finite or an algebraically closed field, the assumption that $C$ is {\it small} implies that $C=\emp$.

We first consider the case when $k=\mF _q$. In this case the size of $C$ is 
the number $|C|$. So  the statement to prove is Proposition \ref{testing-lines}.

\begin{proof}
We start with the following result. 

\begin{claim} $| C(W) |/| A(w) |\geq 1-a/q$  for any  $W\in B$ containing some $L\in C$.
\end{claim}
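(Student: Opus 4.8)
The plan is to reduce the statement to Claim~\ref{bounds1}(2) applied to the three–dimensional affine space $W$ itself. Since $f$ is weakly polynomial of degree $\leq a$ and $W\subset X$ is an affine subspace, the restriction $R:=f_{|W}$ is an honest polynomial of degree $\leq a$ on $W\cong\mA^3$. The hypothesis that $W$ contains some line $L\in C$ means there is an affine line $L\subset W$ with $\deg(f_{|L})=a$; as $f_{|L}=R_{|L}$, this forces $\deg R=a$. (This is the only place where the hypothesis ``$W$ contains some $L\in C$'' is used, and it is essential: without it one could have $\deg R<a$, in which case $C(W)=\emp$ and the asserted inequality would fail.)

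Next I would observe that for every affine line $L\subset W$ we have $f_{|L}=(f_{|W})_{|L}=R_{|L}$, so $L\in C$ if and only if $\deg(R_{|L})=a$, i.e.\ if and only if $L$ does \emph{not} lie in the subvariety $\mcL_R$ of Claim~\ref{bounds1} (with $W$ in the role of $\mA^3$ and $R$ in the role of $P$). Hence $A(W)\setminus C(W)$ is exactly the set of $k$–lines in $\mcL_R$, and therefore
\[
\frac{|C(W)|}{|A(W)|}=1-\frac{|\mcL_R|}{|\mcL_3|}.
\]
Applying Claim~\ref{bounds1}(2) to the degree–$a$ polynomial $R$ on $W$ gives $|\mcL_R|/|\mcL_3|\leq a/q$, and the claim follows.

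I do not expect any genuine obstacle at this step: the argument is pure bookkeeping built on Claim~\ref{bounds1}, which in turn rests on the elementary one–variable/projective-line count in Claim~\ref{bounds}. The role of the present claim is to furnish the $(a/q)$–invariance of $C_f$ along the incidence relation $\mE$, which will then be combined with a $\delta$–density statement for $\mE$ (the genuinely non-trivial input, coming from high rank via Theorem~\ref{B} / Proposition~\ref{line-plane}) to force $C_f=\emp$ through the emptiness lemma, Claim~\ref{emp}; that combination, rather than this claim, is where the real work lies.
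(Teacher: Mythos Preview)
Your proof is correct and is essentially the paper's own argument, spelled out in more detail: restrict $f$ to $W$ to get a genuine polynomial $R$ of degree exactly $a$ (using the line $L\in C$), identify $A(W)\setminus C(W)$ with $\mcL_R$, and apply Claim~\ref{bounds1}(2). One minor remark on your closing commentary: in the paper the $\delta$-density of $\mE$ actually comes from Lemma~A.5 of \cite{kz-uniform} and does \emph{not} require high rank; Theorem~\ref{B}/Proposition~\ref{line-plane} are used elsewhere.
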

\begin{proof}Since $f$ is weakly of degree $\leq d$, its restriction $f_W$ on $W$ is a polynomial of degree $\leq d$. Since $L\in C$, the restriction  $f_W$ is of degree $d$ it follows from Claim \ref{bounds1} that $| C(W) |/| A(w) |\geq 1-a/q$.
\end{proof}

As follows from Lemma A.5 in \cite{{kz-uniform}} for any $\bar d =(d_1,\dots ,d_c)$ there exists $t(\bar d) $ such that 
the subset $B \subset A\times A $ is $q^{-t(d)}$-dense for any $X=X_{\bar P}$, where $P$ is a polynomial of degree $d$. So it follows from Claim \ref{emp} that in the case when   $A(d,L,a)= t(d) +1$ and $a/q\leq 1/2$, we have $C=\emp$.
\end{proof}

We consider now the case when $k$ is algebraically closed. In this case we 
measure the size of $C$ in terms of the codimension $\D (f)$  of the algebraic closure 
$\mZ$ of $C \in \mA$.

\begin{proposition}\label{asplining}
For any $a$ there exists a constant  $C(\bar d,a)$ such that the following holds. 

Any weakly polynomial function $f$ on $X$ of degree $\leq a$ such that $\D (f)\geq C(\bar d,a)$ is actually of degree $<d$.
\end{proposition}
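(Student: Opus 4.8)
The plan is to transcribe the finite-field proof of Proposition \ref{testing-lines} given above into the language of dimensions of varieties, with cardinalities replaced by dimensions and the counting input of Claim \ref{emp} replaced by an elementary incidence estimate. Put $\mZ:=\overline{C_f}\subseteq\mA$, so that $\D(f)=\codim_{\mA}\mZ$ by definition. It suffices to prove that if $C_f\neq\emp$ then $\codim_{\mA}\mZ\leq t(\bar d)$ for a constant $t(\bar d)$ depending only on $\bar d$ and $a$; taking $C(\bar d,a):=t(\bar d)+1$ then forces $C_f=\emp$, i.e. $\deg(f_L)<a$ for every line $L\subset X$, and since $k$ is infinite this is equivalent to $f$ being weakly polynomial of degree $<a$ (on any affine subspace $M\subset X$ the top homogeneous part of $f_M$ vanishes, being zero along every line in $M$). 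Here $C_f\subseteq\mA$ is a constructible set.

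\emph{Invariance of $\mZ$.} Fix a line $L_0$ with $\deg(f_{L_0})=a$ and let $W\in\mB$ be any $3$-dimensional affine subspace of $X$ with $L_0\subset W$. By weak polynomiality $f_W$ is a polynomial of degree $\leq a$ on $W$, and it has degree exactly $a$ since its restriction to $L_0$ does; hence its degree-$a$ homogeneous part $Q_W$ is nonzero. By Claim \ref{bounds}(1), applied to $Q_W$ on the three-dimensional space of directions of $W$, the set of lines $L\subset W$ with $\deg(f_L)<a$ is a proper Zariski-closed subset of the variety of lines of $W$. Since every $L'\in\mE_{L_0}$ lies in a unique such $W$, fibring $\mE_{L_0}$ over the variety $\{W\in\mB:L_0\subset W\}$ shows that $\mE_{L_0}\cap C_f$ is dense in $\mE_{L_0}$, whence $\overline{\mE_{L_0}}\subseteq\overline{C_f}=\mZ$. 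Because $C_f$ is constructible it contains a dense open subset of each component of $\mZ$; applying the above to a generic point $L_0$ of each component we conclude that $\mZ$ is $\mE$-invariant: $\overline{\mE_{L_0}}\subseteq\mZ$ for generic $L_0\in\mZ$.

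\emph{Density of $\mE$ (the high-rank input).} This is the algebro-geometric counterpart of Lemma A.5 of \cite{kz-uniform}: there is a constant $t(\bar d)$ such that, once $r_{nc}(\bar P)$ exceeds an effective threshold, $\codim_{\mA}\overline{\mE_x}\leq t(\bar d)$ for \emph{every} $x\in\mA$. Over $k=\bar{\mF}_p$ this follows from the finite-field density together with the Lang--Weil bounds \cite{LW}: exactly as in the proof of Theorem \ref{AC}, $\codim_{\mA}\overline{\mE_x}=\dim\mA-\lim_{m}\log_{q^m}|\mE_x(\mF_{q^m})|$, uniformly in $x$. For an arbitrary algebraically closed $k$ one transfers the statement by completeness of $ACF_p$ and the ultraproduct/{\L}o\'s argument used in the proof of Corollary \ref{kappa}. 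Granting this, the two steps combine at once: for $L_0$ a generic point of a component of $\mZ$ we have simultaneously $\overline{\mE_{L_0}}\subseteq\mZ$ and $\codim_{\mA}\overline{\mE_{L_0}}\leq t(\bar d)$, hence $\codim_{\mA}\mZ\leq t(\bar d)$, which is the required bound.

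The main obstacle is the density step: one must show that the fibres $\overline{\mE_x}$ of the incidence correspondence have codimension bounded by a constant depending on $\bar d$ alone, independently of $\dim V$, and uniformly over all $x\in\mA$, and then check that this uniformity survives both the Lang--Weil passage from $\mF_q$ to $\bar{\mF}_p$ and the subsequent model-theoretic transfer to arbitrary algebraically closed fields. The remaining points are routine: making the ``dense open in each fibre'' assertions of the invariance step precise when $\mZ$ and the $\mE_x$ are reducible, and checking that a generic point of each component of $\mZ$ avoids the exceptional loci of the two projections $p_1,p_2:\mE\to\mA$.
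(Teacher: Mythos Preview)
Your proof is correct and follows essentially the same approach as the paper's: both arguments (i) use Claim~\ref{bounds}(1) to show that for any $L_0\in C_f$ the set $\mE_{L_0}\cap C_f$ is Zariski-dense in $\mE_{L_0}$ (the invariance step), (ii) invoke Lemma~A.5 of \cite{kz-uniform}, transferred to algebraically closed fields as in Section~3.4, to bound $\codim_{\mA}\mE_{L_0}\leq t(\bar d,a)$ (the density step), and (iii) conclude by an incidence/dimension argument replacing Claim~\ref{emp}. The paper packages step (iii) slightly differently --- it defines the operator $\ti\mZ=p_2(p_1^{-1}(\mZ))$, shows $\ti\mZ\subset\mZ$ from invariance and $\dim(\ti\mZ)>\dim(\mZ)$ from density whenever $\codim\mZ>t$, and derives a contradiction --- whereas you argue more directly that $\overline{\mE_{L_0}}\subset\mZ$ for generic $L_0\in\mZ$ forces $\codim\mZ\leq t$; these are two phrasings of the same dimension count.
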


\begin{proof} 
We start with the following result.
\begin{claim}\label{inc} For any $\bar d =(d_1,\dots ,d_c)$, there exists $t(\bar d,a)$, such that for any non-empty subvariety $\mZ \subset \mA$ of codimension $> t(\bar d,a)$
we have  $\dim(\ti \mZ)>\dim(\mZ)$.
\end{claim}

\begin{proof} It follows from Lemma A.5
in \cite{{kz-uniform}} (see Section 3.4) that $\dim(p_1^{-1}(L))\geq \dim (\mcA)-t(\bar d,a) $. Now the same arguments as in the proof of Claim \ref{emp} proves the inequality $\dim(\ti \mZ)>\dim(\mZ)$.
\end{proof} 

Now we can finish the proof of  Proposition \ref{asplining}. Let $D\subset E$ be the set of pairs of lines $L,L'\in E\cap (C\times C)$. As follows from Claim \ref{bounds} (1), the intersection $D\cap p_1^{-1}(L)(k)$ is dense in $p_1^{-1}(L) $. This implies that $\ti \mZ(k)\cap C$ is dense in $\ti \mZ $. Therefore $\ti \mZ \subset \mZ$. It follows now from Claim \ref{inc} that $\mZ =\emp$ and therefore $C =\emp $.

\end{proof}


\end{document}